  \def\@seccntformat#1{%
    \@nameuse{@seccnt@prefix@#1}%
    \@nameuse{the#1}%
    \@nameuse{@seccnt@postfix@#1}%
    \@nameuse{@seccnt@afterskip@#1}}
  \def\@seccnt@prefix@section{}
  \def\@seccnt@postfix@section{.}
  \def\@seccnt@afterskip@section{\hspace{.5em}}
  \def\@seccnt@prefix@subsection{}
  \def\@seccnt@postfix@subsection{.}
  \def\@seccnt@afterskip@subsection{\hspace{.5em}}
\renewcommand\section{
  \@startsection{section}{3}{\z@}%
  {-3.25ex\@plus -1ex \@minus -.2ex}%
  {1.5ex \@plus .2ex}%
  {\normalfont\normalsize\bfseries\mathversion{bold}}}
\renewcommand\subsection{
  \@startsection{subsection}{3}{\z@}%
  {-3.25ex\@plus -1ex \@minus -.2ex}%
  {1.5ex \@plus .2ex}%
  {\normalfont\normalsize\bfseries\mathversion{bold}}}
\makeatletter \@addtoreset{equation}{section} \makeatother
\renewcommand{\theequation}{\arabic{section}.\arabic{equation}}
\let\oldthebibliography\thebibliography
\renewcommand\thebibliography[1]{
  \oldthebibliography{#1}\setlength{\itemsep}{0.4ex}}
\theoremstyle{plain}
\newtheorem{thm}{Theorem}[section]
\newtheorem{lem}[thm]{Lemma}
\newtheorem{prop}[thm]{Proposition}
\newtheorem{cor}[thm]{Corollary}
\theoremstyle{definition}
\newtheorem{algo}[thm]{Algorithm}
\newtheorem{dfn}[thm]{Definition}
\theoremstyle{remark}
\newtheorem{rem}[thm]{Remark}
\theoremstyle{plain}
\newtheorem*{thm*}{Theorem}
\newtheorem*{lem*}{Lemma}
\newtheorem*{prop*}{Proposition}
\newtheorem*{cor*}{Corollary}
\newtheorem*{conj*}{Conjecture}
\theoremstyle{definition}
\newtheorem*{algo*}{Algorithm}
\newtheorem*{ass*}{Assumption}
\newtheorem*{dfn*}{Definition}
\theoremstyle{remark}
\newtheorem*{rem*}{Remark}
\newcommand{\nn}{\nonumber}
\newcommand{\grp}[1]{\mathrm{#1}}
\newcommand{\bvec}[1]{\boldsymbol{#1}}
\newcommand{\bbR}{\mathbb{R}}
\newcommand{\bbC}{\mathbb{C}}
\newcommand{\bbZ}{\mathbb{Z}}
\newcommand{\bbH}{\mathbb{H}}
\newcommand{\bbP}{\mathbb{P}}
\newcommand{\ri}{{i}}
\newcommand{\varth}{\vartheta}
\newcommand{\tu}{\tilde{u}}
\newcommand{\tS}{\tilde{S}}
\newcommand{\tX}{\tilde{X}}
\newcommand{\cR}{R}
\newcommand{\tcR}{\tilde{R}}
\newcommand{\tpsi}{\tilde{\psi}}
\newcommand{\ha}{\hat{a}}
\newcommand{\hb}{\hat{b}}
\newcommand{\hc}{\hat{c}}
\newcommand{\hd}{\hat{d}}
\newcommand{\halpha}{\hat{\alpha}}
\newcommand{\hpsi}{\psi_J}
\newcommand{\numf}{r}
\newcommand{\vecw}{{\boldsymbol{w}}}
\newcommand{\vecz}{{\boldsymbol{z}}}
\newcommand{\veczero}{{\boldsymbol{0}}}
\newcommand{\lb}{\mathrm{b}}
\newcommand{\Jlb}{J^{E_8,\lb}_{*}}
\newcommand{\dlb}{d^\lb}
\newcommand{\bx}[1]{\makebox[1.2em][c]{#1}}
\newcommand{\mapsfrom}{\mathrel{\reflectbox{\ensuremath{\mapsto}}}}
\newcommand{\gen}[4]{{\phi^{(#1,#2)}_{#3,#4}}}
\newcommand{\ggen}[5]{{\phi^{(#1,#2)}_{#3,#4[#5]}}}
\newcommand{\tv}[3]{\bigl(#1,#2\bigr)_{#3}}
\begin{document}


\def\papertitlepage{\baselineskip 3.5ex \thispagestyle{empty}}
\def\preprinumber#1#2{\hfill
\begin{minipage}{1.22in}
#1 \par\noindent #2
\end{minipage}}

%
\papertitlepage
\setcounter{page}{0}
\preprinumber{arXiv:2410.12907}{}
\vskip 2ex
\vfill
\begin{center}
{\large\bf\mathversion{bold}
The ring of Weyl invariant $E_8$ Jacobi forms
}
\end{center}
\vfill
\baselineskip=3.5ex
\begin{center}
Kazuhiro Sakai\\

{\small
\vskip 6ex
{\it Institute for Mathematical Informatics, Meiji Gakuin University\\
1518 Kamikurata-cho, Totsuka-ku,
Yokohama 244-8539, Japan}\\
\vskip 1ex
{\tt kzhrsakai@gmail.com}

}
\end{center}
\vfill
\baselineskip=3.5ex
\begin{center} {\bf Abstract} \end{center}

We prove that the ring of Weyl invariant $E_8$ weak Jacobi forms
is isomorphic to that of joint covariants of
a binary sextic and a binary quartic form.
The ring is therefore finitely generated.
A minimal basis of generators is obtained from
that already known for the ring of covariants.

\vfill
\vspace{5ex}

\noindent
2020 Mathematics Subject Classification: 11F50, 17B22, 13A50


\setcounter{page}{0}
\newpage
\renewcommand{\thefootnote}{\arabic{footnote}}
\setcounter{footnote}{0}
\setcounter{section}{0}
\baselineskip = 3.5ex
\pagestyle{plain}

\section{Introduction}

Jacobi forms are holomorphic functions that have characteristics
of both an elliptic function and a modular form \cite{EichlerZagier}.
In 1992, Wirthm\"uller studied Jacobi forms that are invariant
under the Weyl group of an irreducible root system \cite{Wirthmuller}.
He proved a Chevalley-type theorem
that the ring of Weyl invariant weak Jacobi forms
is a bigraded polynomial algebra for
any irreducible root system not of type $E_8$.
For the Weyl group of type $E_8$ (denoted by $W(E_8)$), however,
the structure of the ring has remained unclarified since then.

Meanwhile, practical studies of $W(E_8)$-invariant Jacobi forms
were developed in string theory.
Based on the seminal work \cite{Minahan:1998vr},
basic meromorphic Jacobi forms \cite{Eguchi:2002fc}
and basic holomorphic Jacobi forms \cite{Sakai:2011xg}
were constructed. By using these Jacobi forms,
the structure of the ring $J^{E_8}_{*,*}$
of $W(E_8)$-invariant weak Jacobi forms
was investigated \cite{Sakai:2017ihc,DelZotto:2017mee}.
In particular, Wang initiated the systematic study of
the space of $W(E_8)$-invariant Jacobi forms \cite{Wang:2018fil}
and proved that the ring $J^{E_8}_{*,*}$ is not a polynomial algebra.
The succeeding studies \cite{Wang:2020pzq,Sun:2021ije,Sakai:2022taq}
unveiled the complicated structure of the ring.
It has been expected that
the ring is finitely generated over $\bbC$,
but no proof was available.

In this paper we elucidate the structure of the
ring $J^{E_8}_{*,*}$ of $W(E_8)$-invariant weak Jacobi forms.
We prove that the ring is isomorphic to
the ring of joint covariants of
a binary sextic and a binary quartic form
(Theorem~\ref{thm:JandCov}).
We do this by establishing an explicit isomorphism.
The invariants and covariants of binary forms are
the central subject of classical invariant theory.
As Gordan and Hilbert proved in the 19th century,
the ring of invariants (covariants) is always finitely generated.
The above theorem therefore implies that
$J^{E_8}_{*,*}$ is finitely generated.
Moreover, a minimal basis consisting of 194 generators for
the above ring of joint covariants has been computed
by Olive \cite{Olive:2014}.
Hence, using the isomorphism we immediately obtain
a minimal basis of generators for $J^{E_8}_{*,*}$.

Needless to say, the isomorphism is not a coincidence.
The link connecting these two subjects
is a special parametrization of rational elliptic surfaces.
This parametrization,
known as the Seiberg--Witten curve for the E-string theory,
arose in string theory some time ago \cite{Eguchi:2002fc}.
It takes the form
\begin{align}
y^2=4x^3
 -\sum_{i=0}^4 a_i(\tau,\vecz)u^{4-i}x
 -\sum_{j=0}^6 b_j(\tau,\vecz)u^{6-j}.
\label{eq:SWcurveab0}
\end{align}
This is viewed as either a family of rational elliptic surfaces
or a family of elliptic curves,
depending on whether we regard
$x,y,u\in\bbC$ as coordinates
or think of $u$ as a parameter.
The coefficients $a_i,b_j$ are functions of
$\tau\in\bbH$ and $\vecz\in\bbC^8$
whose explicit forms are known \cite{Eguchi:2002fc,Sakai:2011xg}.
In fact, they are the basic
$W(E_8)$-invariant meromorphic Jacobi forms mentioned above.
On the other hand, \eqref{eq:SWcurveab0} contains
a quartic and a sextic polynomial of $u$,
which can be identified with binary forms.
This is a clue to establish the explicit isomorphism.

There are two main theorems in this paper:
Theorem~\ref{thm:intersec} and Theorem~\ref{thm:JandS}.
The former claims that the ring $J^{E_8}_{*,*}$
of $W(E_8)$-invariant weak Jacobi forms is given by
the intersection of two polynomial rings
$\cR=\bbC[a_0,a_2,a_3,a_4,b_0,\ldots,b_6]$
and $\tcR=\bbC[c_0,\ldots,c_4,d_0,d_2,\ldots,d_6]$.
Here, $c_i,d_j$ are functions related to $a_i,b_j$
in a simple manner.
We use this theorem to prove the latter:
Theorem~\ref{thm:JandS}
claims the isomorphism between $J^{E_8}_{*,*}$
and the ring of joint semiinvariants of
a binary sextic and a binary quartic.
By the Roberts isomorphism \cite{Roberts:1861},
the ring of semiinvariants and the ring of covariants
are isomorphic.
Hence, Theorem~\ref{thm:JandS} is essentially
equivalent to Theorem~\ref{thm:JandCov}
which claims the already mentioned isomorphism
between $J^{E_8}_{*,*}$ and the ring of joint covariants.

As an application of Theorem~\ref{thm:intersec},
we formulate an algorithm for constructing
all $W(E_8)$-invariant weak Jacobi forms of given weight and index.
This is more efficient than the previous algorithm
formulated in \cite{Sakai:2022taq}.
As a corollary of Theorem~\ref{thm:JandCov}, we prove
the lower bound conjecture of Sun and Wang \cite{Sun:2021ije}
that the weight of non-zero $W(E_8)$-invariant weak Jacobi forms
of index $m$ is not less than $-4m$.
By the isomorphism, this simply follows from the fact
that there are no covariants of binary forms of negative order.

Using the isomorphism we obtain
a minimal basis consisting of 194 generators
for the ring $J^{E_8}_{*,*}$ of $W(E_8)$-invariant weak Jacobi forms.
This is merely a rearrangement of Olive's results \cite{Olive:2014}
on the generators of covariants.
Nevertheless, it is still useful to present their explicit
forms, because the correspondence between
the weight and index of Jacobi forms
and the degree and order of covariants
is not transparent at all.
In fact, to establish the correspondence
we need to introduce a trigrading to $J^{E_8}_{*,*}$.
This grading is somewhat mysterious from the viewpoint of
Weyl invariant Jacobi forms.
We leave its interpretation as an open question.

The rest of this section is devoted to some
supplemental remarks on the backgrounds
of this work and related studies.

Although all results in this paper are derived
solely from the explicit forms of $a_i,b_j$
without any knowledge about their origin,
it would be informative to see
how these functions are determined
and why the $W(E_8)$ action arises.
The curve \eqref{eq:SWcurveab0} was originally 
constructed in \cite{Eguchi:2002fc} by using mirror symmetry.
There is a mirror pair of Calabi--Yau threefolds
$(X,\tX)$ of the following type
(for the details, see \cite{Minahan:1998vr}):
The threefold $X$ contains a generic rational elliptic surface $S$
as a divisor.
The threefold $\tX$ contains another rational elliptic surface $\tS$
whose complex structure is described by the equation
\eqref{eq:SWcurveab0}
(with undetermined coefficients $a_i,b_j\in\bbC$).
With the intersection bilinear form
the second homology group $H_2(S,\bbZ)$
forms the odd unimodular Lorentzian lattice
$\mathrm{I}_{9,1}=\Lambda_8\oplus\mathrm{I}_{1,1}$.
Here, $\Lambda_8$ is the $E_8$ lattice
and the odd unimodular Lorentzian lattice $\mathrm{I}_{1,1}$
is spanned by the base and the fiber class
of the elliptic fibration of $S$.
By this structure, the prepotential $F(\phi,\tau,\vecz)$,
which enumerates compact rational curves in $X$,
is written in terms of $W(E_8)$-invariant Jacobi forms.
Here, the variables $\phi$, $\tau$ and $\vecz$ represent
the K\"ahler moduli of the base, the fiber and
the curves that span $\Lambda_8$, respectively.
There is a machinery to determine $F$ as a Fourier expansion
in $e^{2\pi i\phi}$ \cite{Minahan:1998vr}.
The form of $a_i(\tau,\vecz),b_j(\tau,\vecz)$ are then determined
in such a way that they reproduce $F$ via
a partial mirror transformation.

There is a more direct way to see the action of $W(E_8)$
in \eqref{eq:SWcurveab0}: As is well known,
the Mordell--Weil lattice of a generic rational elliptic surface
(with a section) is the $E_8$ lattice \cite{Shioda:1990}.
In \cite{Eguchi:2002nx},
240 holomorphic sections of the elliptic fibration
\eqref{eq:SWcurveab0} were explicitly calculated.
These 240 sections correspond to the roots of $E_8$.
It was then clarified that
the $W(E_8)$ action on $\vecz$ indeed
induces the $W(E_8)$ action on these sections.
In other words, \eqref{eq:SWcurveab0} is a parametrization
of rational elliptic surfaces
that makes the automorphism group of the Mordell--Weil lattice
manifest.

The isomorphism described in this paper is reminiscent of
the relation between Siegel modular forms
and invariants of a binary form \cite{Igusa:1962,Igusa:1967}.
The relation has been extended to, for example,
a correspondence between vector valued Siegel modular forms
and covariants of a binary form \cite{Clery:2017}.
It will be interesting to see how extensively
such connections between automorphic forms and invariants
can be established.

\section{Weyl invariant $E_8$ Jacobi forms}

This section is a summary of basic results on
$W(E_8)$-invariant Jacobi forms.

Let $\bbH=\{\tau\in\bbC\,|\,\mathrm{Im}\,\tau>0\}$
be the upper half plane and set $q=e^{2\pi\ri\tau}$.
Let $E_4(\tau)$ and $E_6(\tau)$ denote the Eisenstein series
of weight $4$ and $6$, respectively.
The cusp form
\begin{align}
\Delta=\eta^{24}=\frac{1}{1728}\left(E_4^3-E_6^2\right)
\label{eq:Delta}
\end{align}
is also used frequently, 
where $\eta(\tau)$ is the Dedekind eta function.
See Appendix~\ref{app:functions} for more about the convention.
The $E_8$ lattice $\Lambda_8\subset\bbR^8$ is given by
\begin{align}
\Lambda_8
 =\left\{(x_1,\ldots,x_8)\in\bbZ^8\cup(\bbZ+\tfrac{1}{2})^8
 \mid\textstyle\sum_i x_i\equiv 0\ (\bmod\ 2)\right\}.
\end{align}
In this paper the dot product of two vectors
$\vecz=(z_1,\ldots,z_8)$ and $\vecw=(w_1,\ldots,w_8)$
is defined as $\vecz\cdot\vecw=\sum_{i}z_iw_i$.
We write $\vecz\cdot\vecz$ as $\vecz^2$.
\begin{dfn}\label{def:Jacobi}
A $W(E_8)$-invariant weak Jacobi form
of weight $k$ and index $m$ ($k\in\bbZ,\ m\in\bbZ_{\ge 0}$) 
is a holomorphic function $\varphi_{k,m}:\bbH\times\bbC^8\to \bbC$
that possesses
the following properties \cite{EichlerZagier, Wirthmuller}:
\renewcommand{\theenumi}{\roman{enumi}}
\renewcommand{\labelenumi}{(\theenumi)}
\begin{enumerate}
\item Weyl invariance:
\begin{align}
\varphi_{k,m}(\tau,w(\vecz)) = \varphi_{k,m}(\tau,\vecz),\qquad
w\in W(E_8).
\label{Weylinv}
\end{align}

\item Quasi-periodicity:
\begin{align}
\varphi_{k,m}(\tau,\vecz+\tau\bvec{\alpha}+\bvec{\beta})
=e^{-m \pi \ri (\tau\bvec{\alpha}^2+2\vecz\cdot\bvec{\alpha})}
\varphi_{k,m}(\tau,\vecz),\qquad
\bvec{\alpha},\bvec{\beta}\in\Lambda_8.
\end{align}

\item Modular transformation law:
\begin{align}\label{Modularprop}
&
\varphi_{k,m}\left(
\frac{a\tau+b}{c\tau+d}\,,\frac{\vecz}{c\tau+d}\right)
=(c\tau+d)^k\exp\left(m\pi \ri\frac{c}{c\tau+d}\,\vecz^2\right)
\varphi_{k,m}(\tau,\vecz),\\[1ex]
&
\begin{pmatrix}a&b\\ c&d\end{pmatrix} \in \grp{SL}_2(\bbZ).\nn
\end{align}

\item The function $\varphi_{k,m}(\tau,\vecz)$ admits
a Fourier expansion of the form
\begin{align}\label{Fourierform}
\varphi_{k,m}(\tau,\vecz)
=\sum_{n=0}^\infty
 \sum_{\vecw\in\Lambda_8}
 c(n,\vecw)e^{2\pi \ri\vecw\cdot\vecz}q^n.
\end{align}
\end{enumerate}
If $\varphi_{k,m}(\tau,\vecz)$ further satisfies the condition
that the coefficients $c(n,\vecw)$
of the Fourier expansion (\ref{Fourierform})
vanish unless $\vecw^2\le 2mn$,
it is called a $W(E_8)$-invariant holomorphic Jacobi form.
\end{dfn}
\begin{rem}
In this paper a Jacobi form means a weak Jacobi form
unless otherwise specified.
Let us also introduce the notion of meromorphic Jacobi forms:
If we say that $\varphi(\tau,\vecz)$ is a meromorphic Jacobi form,
we mean that $\varphi$ itself is not a Jacobi form
but there exist a modular form $f$
such that $f\varphi$ is a Jacobi form.
In this paper we only consider the case where
$f$ is a power of $E_4$ or $E_6$.
\end{rem}

Let $J^{E_8}_{k,m}$ denote
the $\bbC$-vector space of $W(E_8)$-invariant weak Jacobi forms
of weight $k$ and index $m$.
We are interested in the structure of the bigraded algebra
\begin{align}
J^{E_8}_{*,*}=\bigoplus_{k,m}J^{E_8}_{k,m}.
\end{align}

Let us next introduce
the nine basic
$W(E_8)$-invariant holomorphic Jacobi forms
constructed in \cite{Sakai:2011xg}.
They are expressed in terms of
the theta function of the $E_8$ lattice $\Lambda_8$:
\begin{align}
\begin{aligned}
\Theta(\tau,\vecz)
 :=\sum_{\vecw\in\Lambda_8}
   \exp\left(\pi \ri\tau\vecw^2
   +2\pi \ri\vecz\cdot\vecw\right)
 =\frac{1}{2}\sum_{k=1}^4\prod_{j=1}^8\varth_k(z_j,\tau).
\end{aligned}
\end{align}
Here, $\varth_k(z,\tau)$ are the Jacobi theta functions
(see Appendix~\ref{app:functions}).
The nine $W(E_8)$-invariant holomorphic Jacobi forms
are then given by \cite[Appendix A]{Sakai:2011xg}
\begin{align}
A_1(\tau,\vecz)&=\Theta(\tau,\vecz),\qquad
A_4(\tau,\vecz)=\Theta(\tau,2\vecz),\nn\\
A_m(\tau,\vecz)&=\tfrac{m^3}{m^3+1}\left(
  \Theta(m\tau,m\vecz)
  +\tfrac{1}{m^4}\mbox{$\sum_{k=0}^{m-1}$}
  \Theta(\tfrac{\tau+k}{m},\vecz)
 \right),\qquad m=2,3,5,\nn\\
B_2(\tau,\vecz)&=\tfrac{32}{5}\left(
 e_1(\tau)\Theta(2\tau,2\vecz)
 +\tfrac{1}{2^4}e_3(\tau)\Theta(\tfrac{\tau}{2},\vecz)
 +\tfrac{1}{2^4}e_2(\tau)\Theta(\tfrac{\tau+1}{2},\vecz)\right),\nn\\
B_3(\tau,\vecz)&=\tfrac{81}{80}\left(
 h_0(\tau)^2\Theta(3\tau,3\vecz)
  -\tfrac{1}{3^5}\mbox{$\sum_{k=0}^{2}$}h_0(\tfrac{\tau+k}{3})^2
  \Theta(\tfrac{\tau+k}{3},\vecz)\right),\nn\\
B_4(\tau,\vecz)&=\tfrac{16}{15}\left(
 \varth_4(2\tau)^4\Theta(4\tau,4\vecz)
 -\tfrac{1}{2^4}\varth_4(2\tau)^4
  \Theta(\tau+\tfrac{1}{2},2\vecz)\right.\nn\\
&\hspace{2em}
 \left.
 -\tfrac{1}{2^2\cdot 4^4}\mbox{$\sum_{k=0}^{3}$}
  \varth_2(\tfrac{\tau+k}{2})^4
  \Theta(\tfrac{\tau+k}{4},\vecz)\right),\nn\\
B_6(\tau,\vecz)&=\tfrac{9}{10}\left(
  h_0(\tau)^2\Theta(6\tau,6\vecz)
 +\tfrac{1}{2^4}\mbox{$\sum_{k=0}^{1}$}
  h_0(\tau+k)^2\Theta(\tfrac{3\tau+3k}{2},3\vecz)\right.\nn\\
&\hspace{2em}\left.
 -\tfrac{1}{3\cdot 3^4}\mbox{$\sum_{k=0}^{2}$}
  h_0(\tfrac{\tau+k}{3})^2\Theta(\tfrac{2\tau+2k}{3},2\vecz)\right.\nn\\
&\hspace{2em}\left.
 -\tfrac{1}{3\cdot 6^4}\mbox{$\sum_{k=0}^{5}$}
  h_0(\tfrac{\tau+k}{3})^2\Theta(\tfrac{\tau+k}{6},\vecz)\right).
\label{E8AB}
\end{align}
Here, functions $e_j(\tau)$ and $h_0(\tau)$ are defined as
\begin{align}
\begin{aligned}
e_1(\tau)&:=
 \tfrac{1}{12}\left(\varth_3(\tau)^4+\varth_4(\tau)^4\right),\\
e_2(\tau)&:=
 \tfrac{1}{12}\left(\varth_2(\tau)^4-\varth_4(\tau)^4\right),\\
e_3(\tau)&:=
 \tfrac{1}{12}\left(-\varth_2(\tau)^4-\varth_3(\tau)^4\right),\\
h_0(\tau)&:=
 \varth_3(2\tau)\varth_3(6\tau)+\varth_2(2\tau)\varth_2(6\tau).
\end{aligned}
\end{align}
Jacobi forms
$A_m,B_m$ are of weight $4,6$ respectively and of index $m$.
If we set $\vecz=\veczero$,
these Jacobi forms reduce to the Eisenstein series $E_4,E_6$:
\begin{align}
A_m(\tau,\veczero)=E_4(\tau),\qquad
B_m(\tau,\veczero)=E_6(\tau).
\end{align}
The following fact is important. 
\begin{prop}[{\cite[Theorem 4.1]{Wang:2018fil},
             \cite[Lemma 3.2]{Sun:2021ije}}]\label{prop:ABindep}
The above nine $A_i,B_j$ are algebraically independent over
the ring of modular forms $\bbC[E_4,E_6]$.
\end{prop}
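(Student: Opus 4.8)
The plan is to turn this into a polynomiality statement and then prove it by a $q$-expansion argument. Since $\bbC[E_4,E_6]$ is a polynomial ring, hence a domain, the nine forms $A_i,B_j$ are algebraically independent over it exactly when the eleven elements $E_4,E_6,A_1,A_2,A_3,A_4,A_5,B_2,B_3,B_4,B_6$ of $J^{E_8}_{*,*}$ are algebraically independent over $\bbC$, i.e.\ when the $\bbC$-subalgebra they generate is a polynomial algebra in eleven variables. As $J^{E_8}_{*,*}$ is bigraded and these generators are bihomogeneous, a hypothetical nonzero relation can be taken bihomogeneous; I would then induct on its total degree, taking it of minimal degree among all nonzero relations.

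The key input is that all eleven forms are \emph{holomorphic} Jacobi forms, so in \eqref{Fourierform} the coefficient $c(n,\vecw)$ vanishes unless $\vecw^2\le 2mn$: thus the coefficient of $q^n$ is a \emph{finite} $W(E_8)$-invariant exponential sum, and the coefficient of $q^0$ is the constant $c(0,\veczero)$. Setting $\vecz=\veczero$ and using $A_m(\tau,\veczero)=E_4$, $B_m(\tau,\veczero)=E_6$ shows this constant equals $1$ for each of the eleven. Hence all of them lie in $\bbC[\zeta_1^{\pm1},\dots,\zeta_8^{\pm1}][[q]]$ with $\zeta_j:=e^{2\pi\ri z_j}$, and are $\equiv 1\pmod q$, so a relation $P=0$ forces $P(1,\dots,1)=0$. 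Write $g_i$ for the $i$-th form minus $1$; each $g_i$ lies in the ideal $(q)$, with explicit coefficient of $q$: namely $240$ and $-504$ for $E_4,E_6$; $\sum_{\vecw^2=2}e^{2\pi\ri\vecw\cdot\vecz}$ for $A_1$; $\sum_{\vecw^2=2}e^{4\pi\ri\vecw\cdot\vecz}$ for $A_4$; $\tfrac{1}{m^3+1}\sum_{\vecw^2=2m}e^{2\pi\ri\vecw\cdot\vecz}$ for $A_m$, $m=2,3,5$; and $\bbC$-combinations of $1,\sum_{\vecw^2=2}e^{2\pi\ri\vecw\cdot\vecz},\sum_{\vecw^2=4}e^{2\pi\ri\vecw\cdot\vecz},\dots$ for the $B_j$, read off directly from \eqref{E8AB}.

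To run the induction, differentiate $P(E_4,\dots,B_6)=0$ along the nine commuting vector fields $q\,\partial_q,\partial_{z_1},\dots,\partial_{z_8}$. This puts the gradient $\bigl(\partial_iP(E_4,\dots,B_6)\bigr)_{i=1}^{11}$ in the kernel of the $9\times 11$ matrix of derivatives of the eleven forms. The core of the argument is to pin down that kernel using the explicit Fourier--Jacobi coefficients of the $A_i,B_j$ at enough powers of $q$, together with the elementary fact that the orbit sums $\sum_{\vecw\in\mathcal O}e^{2\pi\ri\vecw\cdot\vecz}$ over distinct $W(E_8)$-orbits $\mathcal O\subset\Lambda_8$ are functionally independent (the Weyl-averaged group algebra of $\Lambda_8$ is the polynomial ring on the eight fundamental characters). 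I would show the kernel is small enough that $\partial_iP(E_4,\dots,B_6)=0$ for every $i$ outside $\{E_4,E_6,A_1\}$; since each $\partial_iP$ is then a relation of strictly smaller degree, minimality forces $P$ to involve only those three slots. Finally $E_4,E_6,A_1$ are visibly algebraically independent: a relation among them, read as a polynomial in $A_1$ over $\bbC(E_4,E_6)$, would make the function $\vecz\mapsto A_1(\tau,\vecz)=\Theta(\tau,\vecz)$ constant for fixed $\tau$, which it is not, while a relation not involving $A_1$ would contradict the classical independence of $E_4$ and $E_6$.

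The essential obstacle is that eleven functions on the nine-dimensional $\bbH\times\bbC^8$ can never be shown independent by a Jacobian-rank argument; the ``missing two units of transcendence'' must be harvested from the genuine coupling of $q$ with the elliptic variables inside $\Theta$ — the same mechanism by which $q=e^{2\pi\ri\tau}$ and $E_4(\tau)$ are algebraically independent although both depend only on $\tau$. Concretely, one cannot work with a single jet: a truncation at any fixed power of $q$ lands in a ring of transcendence degree only eight, the ring of $W(E_8)$-invariant Laurent polynomials in the $\zeta_j$, and furthermore $q\,\partial_q$ produces quasimodular ($E_2$) corrections when hitting $E_4,E_6$. Thus the delicate point is the kernel computation for the $9\times 11$ derivative matrix — in particular ruling out that a minimal relation touches any of $A_2,\dots,B_6$ — and this is exactly where the fine structure of \eqref{E8AB} and the combinatorics of short vectors in $\Lambda_8$ must be exploited in an essential way.
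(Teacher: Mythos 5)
First, note that the paper does not prove Proposition~\ref{prop:ABindep} at all: it imports it from \cite{Wang:2018fil} and \cite{Sun:2021ije}, and the mechanism behind the cited proof is the one displayed in Remark~\ref{rem:abJacobian} — the ``Jacobian of Jacobi forms'' of \cite{Wang:2020pzq}, a $9\times 9$ determinant whose first row is $(\mathrm{index}_i)\cdot\phi_i$ and whose remaining eight rows are the $\vecz$-gradients. Non-vanishing of that determinant is \emph{equivalent} to algebraic independence over $\bbC[E_4,E_6]$, and it evaluates to a nonzero multiple of $\Phi_{E_8}$. That bespoke first row is precisely the device that supplies the ``missing two units of transcendence'' you correctly identify as the obstacle; your proposal tries to recover them by a different route, and that route has a genuine gap.

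The gap is in the kernel step. Let $M$ be your $9\times 11$ matrix of derivatives of $E_4,E_6,A_1,\dots,B_6$ along $q\,\partial_q,\partial_{z_1},\dots,\partial_{z_8}$, over the relevant function field. Its kernel $K$ has dimension at least $2$, and it \emph{cannot} be contained in the coordinate subspace spanned by the $E_4,E_6,A_1$ slots: if it were, the $9\times 3$ submatrix on those three columns would have rank at most $1$, whereas the $E_4$-column (nonzero, supported on the $q\,\partial_q$ row) and the $A_1$-column (with nonzero $\vecz$-components) are already independent. Concretely, the nine $\vecz$-gradients of $A_1,\dots,B_6$ are nine vectors in an $8$-dimensional space, so they satisfy a relation that necessarily charges at least one of $A_2,\dots,B_6$; correcting its $q\,\partial_q$ component with the $E_4$ column produces a kernel vector of $M$ with nonzero entries in the slots you want to kill. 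Hence ``$\nabla P\in K$'' can never force $\partial_iP(E_4,\dots,B_6)=0$ for $i$ outside $\{E_4,E_6,A_1\}$, and the minimal-degree induction never gets started. The vaguer fallback — extracting constraints from ``enough powers of $q$'' and the functional independence of the Weyl orbit sums — is exactly where the real work lies (it is essentially how \cite{Wang:2018fil} proceeds, via leading Fourier--Jacobi coefficients), but it is not carried out here, and the one concrete mechanism you commit to is provably unavailable. If you want a proof rather than a citation, either reproduce Wang's leading-coefficient analysis or use the Jacobian of Jacobi forms as in Remark~\ref{rem:abJacobian}; the naive $9\times 11$ first-order jet cannot close the argument.
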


A polynomial of $A_i,B_j,E_4,E_6$ is a $W(E_8)$-invariant
Jacobi form. However, the converse is not always true.
In \cite[Appendix~B.4]{DelZotto:2017mee}, Del~Zotto et al.~found that
the $W(E_8)$-invariant holomorphic Jacobi form
\begin{align}
P_{16,5}
 =864A_1^3A_2+3825A_1B_2^2-770E_6A_3B_2-840E_6A_2B_3+60E_6A_1B_4
  +21E_6^2A_5
\label{eq:P16c5}
\end{align}
vanishes at the zero points of $E_4$.
They then conjectured items (1) and (2) of the theorem below.
Sun and Wang proved not only these conjectures
but also item (3), which provides
us with a canonical expression
for every $W(E_8)$-invariant Jacobi form.
\begin{thm}[{Sun and Wang \cite[Theorem 1.1]{Sun:2021ije}}]
\label{thm:SunWang}
\renewcommand{\theenumi}{\arabic{enumi}}
\renewcommand{\labelenumi}{$(\theenumi)$}
~\\[-4ex]
\begin{enumerate}
\item The quotient $P_{16,5}/E_4$ is a $W(E_8)$-invariant
holomorphic Jacobi form of weight $12$ and index $5$.

\item For any $W(E_8)$-invariant
Jacobi form $P\in\bbC[E_6,\{A_i\},\{B_j\}]$,
if $P/E_4$ is holomorphic on $\bbH\times\bbC^8$, then
\begin{align}
\frac{P}{P_{16,5}}\in
\bbC[E_6,\{A_i\},\{B_j\}].
\end{align}

\item Every $W(E_8)$-invariant Jacobi form of index $t$
can be expressed uniquely as
\begin{align}
\frac{\sum_{j=0}^{t_1}\left({P_{16,5}}/{E_4}\right)^{t_1-j}P_j}
     {\Delta^{N_t}},
\label{eq:SunWangform}
\end{align}
where $t_1,N_t\in\bbZ_{\ge 0}$ are such that $t_1=[t/5]$,
$N_t-5t_0=0,0,1,2,3,3$ for $t-6t_0=0,1,2,3,4,5$ respectively
with $t_0=[t/6]$, $[x]$ is the integer part of $x$ and
\begin{align}
\{P_j\}_{j=0}^{t_1-1}&\in \bbC[E_6,\{A_i\},\{B_j\}],\qquad
P_{t_1}\in \bbC[E_4,E_6,\{A_i\},\{B_j\}].
\end{align}
\end{enumerate}
\end{thm}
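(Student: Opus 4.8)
The plan is to prove the three assertions in order, with part~(2) doing the essential work and parts~(1) and~(3) derived around it. For part~(1), note that $E_4$ vanishes precisely on the $\grp{SL}_2(\bbZ)$-orbit of $\tau=\rho:=e^{2\pi\ri/3}$, and there with a simple zero; so it suffices to show that $P_{16,5}(\rho,\vecz)$ vanishes identically in $\vecz$. I would exploit that $\rho$ is fixed by $\bigl(\begin{smallmatrix}0&-1\\1&1\end{smallmatrix}\bigr)\in\grp{SL}_2(\bbZ)$: the transformation law~\eqref{Modularprop} then forces $g(\vecz):=P_{16,5}(\rho,\vecz)$ to satisfy $g(\zeta^{-1}\vecz)=\zeta^{16}\exp\!\bigl(5\pi\ri\,\zeta^{-1}\vecz^2\bigr)g(\vecz)$, where $\zeta:=\rho+1=e^{\pi\ri/3}$ is a primitive sixth root of unity, and moreover multiplication by $\zeta$ preserves the period lattice $\Lambda_8\oplus\rho\Lambda_8$ since $\rho^2+\rho+1=0$. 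The space of holomorphic functions on $\bbC^8$ that are $W(E_8)$-invariant, index-$5$ quasi-periodic, and carry this order-six automorphy factor for weight $16$ is finite dimensional, and one checks it vanishes — equivalently, in the theta decomposition $P_{16,5}=\sum_\mu h_\mu\theta_{5,\mu}$ every weight-$12$ coefficient $h_\mu$ is divisible by $E_4$, i.e.\ has trivial cusp-form part. This last step is a finite verification using the known Fourier expansions of the $A_i,B_j$ recorded in~\eqref{E8AB}.

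For part~(2), I would show that inside $S:=\bbC[E_6,\{A_i\},\{B_j\}]$ the set $\{P\in S : P/E_4\text{ holomorphic on }\bbH\times\bbC^8\}$ equals the principal ideal $P_{16,5}\cdot S$. One inclusion is immediate from part~(1), since $P_{16,5}Q=E_4\cdot(P_{16,5}/E_4)Q$ with $(P_{16,5}/E_4)Q$ a Jacobi form for any $Q\in S$. For the reverse inclusion I would induct on the index $m$ of $P$ and, within fixed $m$, on the weight. If $m<5$, then expanding $P$ in the basis of $A_i,B_j$-monomials with coefficients in $\bbC[E_6]$ (legitimate by Proposition~\ref{prop:ABindep}) and using that $J^{E_8}_{*,m}$ for $m<5$ is already the $\bbC[E_4,E_6]$-span of such monomials, divisibility by $E_4$ forces every coefficient — a polynomial in $E_6$ alone — to vanish, so $P=0$. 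For $m\ge5$, the holomorphic Jacobi form $P/E_4$ lies in $J^{E_8}_{k-4,m}$; writing it against the generators of the $\bbC[E_4,E_6]$-module $J^{E_8}_{*,m}$ — namely the index-$m$ monomials in the $A_i,B_j$ together with the products $(P_{16,5}/E_4)^{\ell}\cdot(\text{index-}(m-5\ell)\text{ monomial})$ — one isolates a term $c\,(P_{16,5}/E_4)\,Q$ with $Q\in S$ of index $m-5$ whose subtraction leaves $P-c\,P_{16,5}Q\in S$, still divisible by $E_4$, of strictly smaller weight, so the induction closes. The genuinely delicate ingredient is this description of the $\bbC[E_4,E_6]$-module $J^{E_8}_{*,m}$ for each $m$ — in particular that no Jacobi form in $S$ acquires a factor of $E_4$ outside $P_{16,5}\cdot S$ — which has to be extracted from the explicit structure of $J^{E_8}_{*,*}$ established in the cited works together with dimension counts of the spaces $J^{E_8}_{k,m}$.

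Part~(3) then follows formally. Wirthm\"uller-type bounds, made explicit for $E_8$ in the cited literature, give $\Delta^{N_t}\varphi\in\bbC[E_4,E_6,\{A_i\},\{B_j\}]$ for every $W(E_8)$-invariant Jacobi form $\varphi$ of index $t$ with $N_t$ as stated; writing $\Delta^{N_t}\varphi$ as a polynomial in $E_4$ over $S$ and repeatedly applying part~(2) to those coefficients divisible by $E_4$ rewrites it as $\sum_{j=0}^{t_1}P_{16,5}^{\,t_1-j}E_4^{\,j}P_j$ with $P_j\in S$ for $j<t_1$ and $P_{t_1}\in\bbC[E_4,E_6,\{A_i\},\{B_j\}]$, where $t_1=[t/5]$ is forced by the index bookkeeping; dividing by $\Delta^{N_t}E_4^{t_1}$ yields~\eqref{eq:SunWangform}. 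Uniqueness comes from Proposition~\ref{prop:ABindep} together with the fact, established en route, that $P_{16,5}/E_4\notin\bbC[E_4,E_6,\{A_i\},\{B_j\}]$, so powers of $P_{16,5}/E_4$ cannot be absorbed into the polynomial part. The main obstacle throughout is the module-theoretic statement underlying part~(2): pinning down, index by index, exactly which Jacobi forms in $S$ become divisible by $E_4$, where the idiosyncratic arithmetic of the E-string ring enters and a careful accounting of $\dim J^{E_8}_{k,m}$ seems unavoidable.
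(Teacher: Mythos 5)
A preliminary but important remark: the paper offers no proof of Theorem~\ref{thm:SunWang} --- it is imported verbatim from Sun and Wang \cite{Sun:2021ije} and used as a black box --- so there is no in-paper argument to compare yours against; your proposal must stand on its own. It does not, for two reasons. First, in part (1) your reduction to showing $P_{16,5}(\rho,\vecz)\equiv 0$ at $\rho=e^{2\pi\ri/3}$ is sound (the zeros of $E_4$ on $\bbH$ are simple and form the orbit of $\rho$), but the mechanism you propose proves too much and is therefore false: if the space of $W(E_8)$-invariant, index-$5$ quasi-periodic functions carrying the order-six automorphy factor really vanished, then \emph{every} element of $J^{E_8}_{16,5}$ would vanish at $\rho$ and hence be divisible by $E_4$; in particular each of the six monomials $A_1^3A_2$, $A_1B_2^2$, $E_6A_3B_2$, $E_6A_2B_3$, $E_6A_1B_4$, $E_6^2A_5$ would separately be divisible by $E_4$, and part (2) would then force, e.g., $A_1^3A_2$ to be a constant multiple of $P_{16,5}$, contradicting Proposition~\ref{prop:ABindep}. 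The true statement is only that the one specific linear combination $P_{16,5}$, with its coefficients $864,3825,-770,-840,60,21$, maps to zero in that (nonzero) eigenspace; verifying this is the entire content of part (1), and it is exactly the ``finite verification'' you defer without performing. (A side error: for weight-$12$ forms, divisibility by $E_4$ means lying in $\bbC E_4^3$, which is not the same as having trivial cusp part --- $E_4^3$ has a nontrivial cusp part and $E_{12}$ is not divisible by $E_4$.)

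Second, the induction you set up for part (2) is circular. The inductive step isolates a term $c\,(P_{16,5}/E_4)\,Q$ by expanding $P/E_4$ against generators of the $\bbC[E_4,E_6]$-module $J^{E_8}_{*,m}$, taken to be the index-$m$ monomials in $A_i,B_j$ together with products of powers of $P_{16,5}/E_4$ with lower-index monomials. But that presentation of $J^{E_8}_{*,m}$ is, in substance, the assertion of part (3) --- the very theorem being proved. You acknowledge this as the ``genuinely delicate ingredient'' to be ``extracted from the cited literature'', but without an independent derivation (which in Sun and Wang's work rests on dimension formulas for $J^{E_8}_{k,m}$ and a case analysis of which polynomials in $E_6,\{A_i\},\{B_j\}$ acquire zeros along $E_4=0$) the argument has no base to stand on. Part (3) is indeed essentially formal once parts (1), (2) and the $\Delta^{N_t}$ bounds are available, so the proposal's failure is concentrated in the two gaps above.
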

%

\section{Meromorphic Jacobi forms and intersection of polynomial rings}

\subsection{Seiberg--Witten curve and its coefficients}

In this subsection we recall
the Seiberg--Witten curve for the E-string theory \cite{Eguchi:2002fc}
and the role of its coefficients $a_i,b_j$
in the study of the ring $J^{E_8}_{*,*}$ \cite{Sakai:2022taq}.
This subsection is not intended to give a mathematical formulation
of the Seiberg--Witten curve \cite{Seiberg:1994rs,Seiberg:1994aj}
nor the E-string theory \cite{Ganor:1996mu,Seiberg:1996vs};
we simply overview the concrete form of the curve
and properties of the functions $a_i,b_j$
relevant for the present study.

The Seiberg--Witten curve for the E-string theory is
a family of elliptic curves over $\bbC$.
It can be expressed in the Weierstrass form
\begin{align}
y^2=4x^3
 -\sum_{i=0}^4 a_i(\tau,\vecz)u^{4-i}x
 -\sum_{j=0}^6 b_j(\tau,\vecz)u^{6-j}.
\label{eq:SWcurveab}
\end{align}
By regarding $u$ as the affine coordinate on $\bbP^1$,
the equation \eqref{eq:SWcurveab}
describes a family of rational elliptic surfaces,
parametrized by $\tau\in\bbH$ and $\vecz\in\bbC^8$.
The coefficient functions $a_i,b_j: \bbH\times\bbC^8\to\bbC$
are not arbitrary; rather, they have to be chosen in a very special
(essentially unique) way
so that the curve reproduces the spectrum \cite{Minahan:1998vr}
of the E-string theory via a partial mirror transformation.
The existence of the curve was conjectured
in \cite{Ganor:1996xd,Ganor:1996pc}
and a fully general form was explicitly constructed
in \cite{Eguchi:2002fc}.

The coefficient functions $a_i,b_j$
play a crucial role in this paper.
Before going into details,
let us take an overview.
The variable $u$ is interpreted as the coordinate of
the moduli space of vacua and does not appear directly
in the physical spectrum.
Therefore, we can redefine $u$ by
a suitable translation $u\mapsto u+\kappa$
and set
\begin{align}
a_1=0
\end{align}
without loss of generality.
It is also customary to set
\begin{align}
a_0=\frac{E_4}{12},\qquad b_0=\frac{E_6}{216},
\end{align}
so that the elliptic fiber at $u=\infty$
is parametrized by $z\in \bbC/(2\pi\bbZ+2\pi\tau\bbZ)$
via the mapping $(x,y)=(\wp(z),\partial_z\wp(z))$.
(Here, $\wp(z)$ is the Weierstrass elliptic function.)
The other coefficients
$\{a_i\}_{i=2}^4$, $\{b_j\}_{j=1}^6$
were explicitly constructed in \cite{Eguchi:2002fc}.
By construction they possess all the properties
of $W(E_8)$-invariant weak Jacobi forms
\cite{Eguchi:2002fc,Eguchi:2002nx}
except that they have poles on $\bbH$.
Indeed, later they were expressed in terms of the above
$W(E_8)$-invariant holomorphic Jacobi forms $A_i,B_j$
and $E_4,E_6$ in \cite{Sakai:2011xg}.
The expressions are presented in Appendix~\ref{app:coeff}
for the sake of completeness.

Specifically, the coefficients $a_i,b_j$
have the following properties.
\begin{prop}[Properties of $a_i,b_j$]
\label{prop:abprop}
\renewcommand{\theenumi}{\arabic{enumi}}
\renewcommand{\labelenumi}{$(\theenumi)$}
~\\[-4ex]
\begin{enumerate}
\item Each of $a_i,b_j$ is homogeneous with respect to the bigrading:
$a_i$ is of weight $4-6i$ and index $i$;
$b_j$ is of weight $6-6j$ and index $j$.

\item The functions $a_2,a_3,a_4,b_1,\ldots,b_6$
possess all the properties of
$W(E_8)$-invariant Jacobi forms, except that
they have poles at the zero points of $E_4$.
When multiplied by a certain power of $E_4$,
they give $W(E_8)$-invariant Jacobi forms:
\begin{align}
E_4^{i-1}a_i\in J^{E_8}_{-2i,i},\qquad
E_4^j b_j\in J^{E_8}_{6-2j,j}.
\end{align}

\item The functions $a_i,b_j$ admit a Fourier expansion
of the form
\begin{align}
a_i(\tau,\vecz)=\sum_{n=0}^\infty a_i^{(n)}(\vecz)q^n,\qquad
b_j(\tau,\vecz)=\sum_{n=0}^\infty b_j^{(n)}(\vecz)q^n.
\end{align}
\end{enumerate}
\end{prop}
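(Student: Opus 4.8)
The plan is to derive all three statements directly from the closed-form expressions for $a_i,b_j$ collected in Appendix~\ref{app:coeff}, in which each coefficient is written as a $\bbC$-linear combination of quotients of monomials in the basic holomorphic Jacobi forms $A_1,\dots,A_5,B_2,\dots,B_6$ and the modular forms $E_4,E_6$ by powers of $E_4$ and $\Delta$. I would prove the items in the order (1), (3), (2), since the last uses the first two.

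For (1): weight and index are additive under multiplication and unchanged under division, $A_m$ has weight $4$ and index $m$, $B_m$ has weight $6$ and index $m$, and $E_4,E_6$ have index $0$ and weights $4,6$ (so $\Delta$ has index $0$ and weight $12$). Running through the Appendix expression for $a_i$ (resp.\ $b_j$) monomial by monomial then shows that every term is homogeneous of weight $4-6i$ and index $i$ (resp.\ of weight $6-6j$ and index $j$). As a consistency check, this homogeneity is the weighted homogeneity of the Weierstrass equation \eqref{eq:SWcurveab} under the rescaling that gives $u,x,y$ the $(\text{weight},\text{index})$ bidegrees $(-6,1)$, $(-10,2)$, $(-15,3)$: under these assignments every monomial in \eqref{eq:SWcurveab} carries bidegree $(-30,6)$.

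For (3): each of $A_m,B_m,E_4,E_6$ has a Fourier expansion in non-negative powers of $q$, and $E_4=1+O(q)$ is a unit in $\bbC[[q]]$, so division by any power of $E_4$ again yields such a series. The only nontrivial point is that, whenever a factor $\Delta^N$ occurs in a denominator in Appendix~\ref{app:coeff}, the accompanying numerator polynomial in $A_m,B_m,E_4,E_6$ must be divisible by $\Delta^N$, i.e.\ its Fourier coefficients of $q^0,\dots,q^{N-1}$ (as functions of $\vecz$) vanish identically. This is a finite check with the $q$-expansions of $A_m,B_m,E_4,E_6$; the mild subtlety is that the $\vecz$-dependence of these low coefficients must cancel, which it does because the $q^0$-terms of all of $A_m,B_m,E_4,E_6$ are $\vecz$-independent constants (each equal to $1$). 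This yields the expansions $a_i=\sum_{n\ge 0}a_i^{(n)}(\vecz)q^n$ and $b_j=\sum_{n\ge 0}b_j^{(n)}(\vecz)q^n$ of (3).

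For (2): any polynomial in $A_m,B_m,E_4,E_6$ is a holomorphic $W(E_8)$-invariant Jacobi form --- it inherits the Weyl invariance, quasi-periodicity, and modular transformation law of Definition~\ref{def:Jacobi} from its constituents, with weight and index given by (1) --- and dividing it by powers of $\Delta$ preserves holomorphicity on $\bbH\times\bbC^8$ since $\Delta$ vanishes nowhere there, so the only poles of $a_i$ (resp.\ $b_j$) on $\bbH\times\bbC^8$ are those contributed by the factor $E_4^{-(i-1)}$ (resp.\ $E_4^{-j}$) in Appendix~\ref{app:coeff}. Multiplying $a_i$ by $E_4^{i-1}$ and $b_j$ by $E_4^{j}$ then cancels these poles, and the result is holomorphic on $\bbH\times\bbC^8$, satisfies (i)--(iii) of Definition~\ref{def:Jacobi}, has a Fourier expansion of the form (iv) by (3), and has bidegree $(-2i,i)$ (resp.\ $(6-2j,j)$) by (1); hence $E_4^{i-1}a_i\in J^{E_8}_{-2i,i}$ and $E_4^{j}b_j\in J^{E_8}_{6-2j,j}$. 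The one real obstacle is purely bookkeeping: one must check against Appendix~\ref{app:coeff} that the exponent of $E_4$ in the denominator of $a_i$ (resp.\ $b_j$) is at most $i-1$ (resp.\ $j$), since a larger exponent would leave an uncancelled pole and make (2) false. Beyond that there is no conceptual difficulty, and indeed most of (2)--(3) is already implicit in the constructions of \cite{Eguchi:2002fc,Sakai:2011xg} and the analysis of \cite{Sakai:2022taq}.
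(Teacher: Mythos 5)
The paper does not actually prove Proposition~\ref{prop:abprop}: it is presented as a summary of facts established in \cite{Eguchi:2002fc,Eguchi:2002nx,Sakai:2011xg}, with the explicit formulas recorded in Appendix~\ref{app:coeff} ``for the sake of completeness.'' Your plan --- to rederive all three items directly from \eqref{eq:abinABfull} --- is therefore the natural self-contained substitute, and its overall structure is sound: item (1) is pure bookkeeping of weights and indices (and your consistency check via the weighted homogeneity of \eqref{eq:SWcurveab} is correct); item (2) reduces, as you say, to verifying that the power of $E_4$ in each denominator is exactly $i-1$ for $a_i$ and $j$ for $b_j$ (it is), together with the fact that $\Delta$ is nowhere vanishing on $\bbH$ and that quotients of holomorphic Jacobi forms by nowhere-vanishing modular forms keep properties (i)--(iii) of Definition~\ref{def:Jacobi}.

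The one genuine weak point is your justification of the $\Delta^N$-divisibility needed for item (3). You claim the required cancellation of $\vecz$-dependence in the coefficients of $q^0,\dots,q^{N-1}$ of the numerators holds ``because the $q^0$-terms of $A_m,B_m,E_4,E_6$ are $\vecz$-independent constants.'' That argument only disposes of the $q^0$ coefficient (where every generator may be replaced by $1$, so the coefficient is a number that one checks equals $0$). For $N\ge 2$ --- which occurs for $a_3,b_3$ ($N=2$), $a_4,b_4,b_5$ ($N=3$) and $b_6$ ($N=5$) --- the coefficients of $q^1,\dots,q^{N-1}$ of the numerator involve the higher Fourier coefficients $A_m^{(n)}(\vecz)$, $B_m^{(n)}(\vecz)$ with $n\ge 1$, which are nonconstant sums over Weyl orbits; their identical vanishing is a nontrivial system of identities among such orbit sums (for $b_6$ one must check vanishing through order $q^4$). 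Your instinct that this is a finite, mechanical verification with the $q$-expansions is correct and is how one would actually complete the proof, but the reason you offer does not establish it, so as written this step is asserted rather than proved. Once that verification is granted, your deduction of (2) from (1) and (3) goes through.
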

\begin{rem}
The leading order coefficients
$a_i^{(0)},b_j^{(0)}$
of the Fourier expansion
were explicitly computed
in \cite[Appendix~B]{Eguchi:2002fc}.
They are expressed as polynomials of characters (exponential sums)
for the Weyl orbits of the fundamental weights of $E_8$.
\end{rem}
\begin{prop}
\label{prop:abindep}
The functions
$a_0,a_2,a_3,a_4,b_0,\ldots,b_6$
are algebraically independent over $\bbC$.
\end{prop}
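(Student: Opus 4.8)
The plan is to deduce the algebraic independence of the eleven functions $a_0,a_2,a_3,a_4,b_0,\ldots,b_6$ from the already‑established algebraic independence of the nine Jacobi forms $A_1,A_2,A_3,A_5,A_4,B_2,B_3,B_4,B_6$ over $\bbC[E_4,E_6]$ (Proposition~\ref{prop:ABindep}), together with the identification $a_0=E_4/12$, $b_0=E_6/216$ and the explicit formulas for $a_2,a_3,a_4,b_1,\ldots,b_6$ recalled in Appendix~\ref{app:coeff}. The key observation is that by Proposition~\ref{prop:abprop}(2), each $a_i$ (resp.\ $b_j$) equals a $\bbC$‑linear combination of monomials in $A_1,A_2,A_3,A_5$ and $B_2,B_3,B_4,B_6$ with coefficients in $\bbC[E_4,E_6]$, after multiplying by the appropriate power of $E_4$; equivalently, $a_0,\ldots,b_6$ lie in the field $\bbC(E_4,E_6,A_1,A_2,A_3,A_5,B_2,B_3,B_4,B_6)$, which by Proposition~\ref{prop:ABindep} is a purely transcendental extension of $\bbC$ of transcendence degree $11$. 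So it suffices to exhibit eleven of the $a_i,b_j$ (namely all of them) that generate a subfield of transcendence degree $11$, i.e.\ that the Jacobian of the map $(A_1,A_2,A_3,A_5,B_2,B_3,B_4,B_6,E_4,E_6)\mapsto(a_0,a_2,a_3,a_4,b_0,\ldots,b_6)$ — an $11\times 11$ matrix of rational functions — is not identically zero.

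First I would fix the dictionary: $a_0=E_4/12$ and $b_0=E_6/216$ immediately give two of the coordinates as the two modular generators, so the problem reduces to showing that $a_2,a_3,a_4,b_1,\ldots,b_6$ (nine functions) are algebraically independent over $\bbC(E_4,E_6)$. Each of these nine is, up to a power of $E_4$, a Jacobi form expressible through the $A_i,B_j$; the cleanest route is to compute a few leading Fourier coefficients. Using the $q$‑expansions of $A_i,B_j$ (or directly the known leading coefficients $a_i^{(0)},b_j^{(0)}$ of Eguchi–Sakai recalled in the Remark after Proposition~\ref{prop:abprop}, which are distinct nontrivial Weyl‑orbit characters of the eight fundamental weights of $E_8$), one sees that the $a_i^{(0)},b_j^{(0)}$ involve algebraically independent character variables, forcing algebraic independence of the $a_i,b_j$ over $\bbC$.

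Concretely, the cleanest argument I would write is: specialize $\tau$ so that $E_4,E_6$ take generic values (possible since $E_4,E_6$ are algebraically independent), and then observe that the leading Fourier coefficients $a_2^{(0)},a_3^{(0)},a_4^{(0)},b_1^{(0)},\ldots,b_6^{(0)}$ are the nine elementary building blocks — polynomials in the eight Weyl‑orbit characters $\chi_1,\ldots,\chi_8$ of $E_8$ — which are known to be algebraically independent (they freely generate the character ring $\bbC[\vecz]^{W(E_8)}$ by the Chevalley theorem applied to the finite group $W(E_8)$ acting on the torus, i.e.\ $\bbC[\mathrm{Char}]^{W(E_8)}$ is a polynomial ring in $8$ generators). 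Counting: $a_2^{(0)},a_3^{(0)},a_4^{(0)}$ together with $b_1^{(0)},\ldots,b_6^{(0)}$ are nine polynomials in these eight characters, so they cannot themselves be algebraically independent at order $q^0$ alone — one must also bring in the next Fourier coefficient in $q$. Hence the honest version uses the full $q$‑series: one shows the $11$ functions, viewed as power series in $q$ with coefficients in $\bbC[\vecz]$, have algebraically independent images, either by the Jacobian‑matrix criterion over the field $\bbC((q))(z_1,\ldots,z_8)$ or by exhibiting an explicit $11$‑dimensional family of values.

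The hard part will be organizing the Jacobian/transcendence‑degree bookkeeping so that it is genuinely rigorous rather than a heuristic dimension count: the order‑$q^0$ data gives only eight character degrees of freedom, so one must show that the extra information needed comes from the $q^1$‑coefficients (which it does, because $E_4,E_6$ and the higher Jacobi‑form structure are visible only there). In practice I expect the proof the paper gives is short: invoke Proposition~\ref{prop:ABindep} to get transcendence degree $11$ for the field $\bbC(E_4,E_6,A_i,B_j)$, note that $a_0,\ldots,b_6$ lie in this field and are $11$ in number, and then verify — by one explicit Jacobian computation, or by evaluating at a convenient point $(\tau,\vecz)$ — that they are not algebraically dependent, which by the matching count of $11$ forces them to be a transcendence basis. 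That final nondegeneracy check is the only real content, and it is a finite (if tedious) calculation with the formulas in Appendix~\ref{app:coeff}.
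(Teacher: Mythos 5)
Your final plan coincides with the paper's proof: the paper reduces to algebraic independence of $a_2,a_3,a_4,b_1,\ldots,b_6$ over $\bbC[E_4,E_6]=\bbC[a_0,b_0]$ and then carries out exactly the nondegeneracy check you deferred, computing from \eqref{eq:abinABfull} the $9\times 9$ Jacobian determinant $\bigl|\partial(a_2,a_3,a_4,b_1,\ldots,b_6)/\partial(A_1,A_2,B_2,A_3,B_3,A_4,B_4,A_5,B_6)\bigr|=\tfrac{2^{15}\cdot 5^4\cdot 7^2}{3^2\Delta^{14}E_4^2}\ne 0$ and invoking Proposition~\ref{prop:ABindep}. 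You were also right to abandon the $O(q^0)$ character-ring route (eight characters cannot carry nine independent functions), and the paper makes no use of it.
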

\begin{proof}
Using \eqref{eq:abinABfull} we obtain
\begin{align}
\left|
 \frac{\partial(a_2,a_3,a_4,b_1,b_2,b_3,b_4,b_5,b_6)}
      {\partial(A_1,A_2,B_2,A_3,B_3,A_4,B_4,A_5,B_6)}\right|
=\frac{2^{15}\cdot 5^4\cdot 7^2}{3^2\Delta^{14}E_4^2}\ne 0.
\end{align}
Combining this with
the algebraic independence of $A_i,B_j$
(Proposition~\ref{prop:ABindep}),
we see that
$a_2,a_3,a_4,b_1,\ldots,b_6$ are algebraically independent
over $\bbC[E_4,E_6]=\bbC[a_0,b_0]$.
\end{proof}
\begin{rem}\label{rem:abJacobian}
The algebraic independence of $a_i,b_j$ can be expressed more directly
via the Jacobian of Jacobi forms defined in
\cite[Proposition 2.2]{Wang:2020pzq}:
$a_2,a_3,a_4,b_1,\ldots,b_6$ are algebraically independent
over $\bbC[E_4,E_6]$ if and only if the Jacobian of them
is not identically zero.\footnote{The author
is grateful to Haowu Wang for his suggestion 
to calculate this Jacobian determinant.}
Indeed, the Jacobian is calculated as
\begin{align}
\begin{aligned}
&\left|\begin{array}{ccccccccc}
2a_2&3a_3&4a_4&b_1&2b_2&3b_3&4b_4&5b_5&6b_6\\[1ex]
\frac{1}{2\pi i}\frac{\partial a_2}{\partial z_1}&
\frac{1}{2\pi i}\frac{\partial a_3}{\partial z_1}&
\frac{1}{2\pi i}\frac{\partial a_4}{\partial z_1}&
\frac{1}{2\pi i}\frac{\partial b_1}{\partial z_1}&
\frac{1}{2\pi i}\frac{\partial b_2}{\partial z_1}&
\frac{1}{2\pi i}\frac{\partial b_3}{\partial z_1}&
\frac{1}{2\pi i}\frac{\partial b_4}{\partial z_1}&
\frac{1}{2\pi i}\frac{\partial b_5}{\partial z_1}&
\frac{1}{2\pi i}\frac{\partial b_6}{\partial z_1}\\[1ex]
\vdots&\vdots&\vdots&\vdots&\vdots&\vdots&\vdots&\vdots&\vdots\\[1ex]
\frac{1}{2\pi i}\frac{\partial a_2}{\partial z_8}&
\frac{1}{2\pi i}\frac{\partial a_3}{\partial z_8}&
\frac{1}{2\pi i}\frac{\partial a_4}{\partial z_8}&
\frac{1}{2\pi i}\frac{\partial b_1}{\partial z_8}&
\frac{1}{2\pi i}\frac{\partial b_2}{\partial z_8}&
\frac{1}{2\pi i}\frac{\partial b_3}{\partial z_8}&
\frac{1}{2\pi i}\frac{\partial b_4}{\partial z_8}&
\frac{1}{2\pi i}\frac{\partial b_5}{\partial z_8}&
\frac{1}{2\pi i}\frac{\partial b_6}{\partial z_8}
      \end{array}\right|\\[1ex]
&=-\frac{2^{16}}{3E_4}\Phi_{E_8},
\end{aligned}
\end{align}
where
\begin{align}
\Phi_{E_8}
 =\prod_{\bvec{r}\,\in\,
         \substack{\textrm{all positive}\\ \textrm{roots of $E_8$}}}
  \frac{\varth_1(\bvec{r}\cdot\vecz,\tau)}{\eta(\tau)^3}.
\end{align}
\end{rem}

\begin{prop}
\label{prop:ABinab}
The nine basic Jacobi forms
$A_i,B_j$ are polynomials of
$a_0,$ $a_2,$ $a_3,$ $a_4,$ $b_0,$ $b_1,\ldots,b_6$.
\end{prop}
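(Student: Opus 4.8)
The plan is to invert the explicit expressions for the $a_i,b_j$ collected in Appendix~\ref{app:coeff} (equation~\eqref{eq:abinABfull}), organizing everything by the index. Write $\cR=\bbC[a_0,a_2,a_3,a_4,b_0,\ldots,b_6]$. Since $a_0=E_4/12$ and $b_0=E_6/216$, we have $E_4,E_6\in\cR$, so it is enough to prove $A_i,B_j\in\cR$. The key structural observation is that \eqref{eq:abinABfull} is triangular in the index: because $a_m$ (resp.\ $b_m$) has index $m$ and $E_4^{m-1}a_m$, $E_4^{m}b_m$ are honest Jacobi forms, every monomial in $A_\bullet,B_\bullet,E_4,E_6$ occurring in the formula for $a_m$ or $b_m$ is a product of basic forms whose indices sum to $m$; hence the index-$m$ basic forms $A_m,B_m$ occur at most linearly and with no further positive-index factor. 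Therefore
\[
\begin{pmatrix}a_m\\ b_m\end{pmatrix}=M_m\begin{pmatrix}A_m\\ B_m\end{pmatrix}+R_m ,\qquad m=2,3,4,
\]
with the obvious $1\times1$ versions for $m=1$ ($b_1\leftrightarrow A_1$), $m=5$ ($b_5\leftrightarrow A_5$) and $m=6$ ($b_6\leftrightarrow B_6$); here $M_m$ has entries in $\bbC(E_4,E_6)$ and $R_m$ is a $\bbC(E_4,E_6)$-linear combination of monomials in the basic forms of index strictly less than $m$.

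I would then argue that each $M_m$ is invertible over $\bbC(E_4,E_6)$. Regarding $E_4,E_6,A_i,B_j$ as indeterminates (legitimate by Proposition~\ref{prop:ABindep}) and ordering rows and columns by index, the Jacobian matrix $\partial(a_2,a_3,a_4,b_1,\ldots,b_6)/\partial(A_1,A_2,B_2,A_3,B_3,A_4,B_4,A_5,B_6)$ becomes block lower-triangular with diagonal blocks $M_1,\ldots,M_6$; since this Jacobian is nonzero — precisely the computation in the proof of Proposition~\ref{prop:abindep} — each $\det M_m\neq0$. One now inverts block by block, by induction on $m$: assuming every basic form of index $<m$ already lies in $\cR$, we get $R_m\in\mathrm{Frac}(\cR)$, and hence
\[
\begin{pmatrix}A_m\\ B_m\end{pmatrix}=M_m^{-1}\!\left(\begin{pmatrix}a_m\\ b_m\end{pmatrix}-R_m\right)\in\mathrm{Frac}(\cR).
\]

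The remaining point — and the one I expect to be the main obstacle — is to upgrade these rational expressions to genuine polynomials in $a_0,a_2,a_3,a_4,b_0,\ldots,b_6$, i.e.\ to show that all powers of $E_4$ (and of $E_6$ and $\Delta$) appearing in denominators during the inversion cancel. A priori the construction only yields $A_i,B_j\in\bbC(E_4,E_6)[a_0,\ldots,b_6]=\bbC(a_0,b_0)[a_0,\ldots,b_6]$, so one must check that the poles of the $a_i,b_j$ along $E_4=0$ are exactly matched; this is the same phenomenon as the holomorphicity of $P_{16,5}/E_4$ in Theorem~\ref{thm:SunWang}(1). Since there are only nine forms $A_i,B_j$ and the recursion is short, this can be settled by a finite explicit computation: substitute \eqref{eq:abinABfull} together with the polynomial formulas already found for the lower-index basic forms, and verify the cancellation, thereby exhibiting each $A_i,B_j$ as an explicit element of $\cR$.
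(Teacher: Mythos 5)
Your proposal is correct and ultimately the same as the paper's: the paper proves this proposition simply by stating that direct calculation inverts \eqref{eq:abinABfull} to give the explicit polynomial formulas \eqref{eq:ABinab}, which is exactly the finite explicit computation your argument reduces to. Your index-triangular/block-inversion scaffolding is a sensible way to organize that computation (and correctly identifies the cancellation of $E_4,E_6,\Delta$ denominators as the only nontrivial point), but it does not replace the explicit verification that both you and the paper rely on.
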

\begin{proof}
By direct calculation
the expressions of $a_i,b_j$ can be inverted as in \eqref{eq:ABinab}.
\end{proof}

Let $\cR$ be the polynomial ring
\begin{align}
\cR:=\bbC[a_0,a_2,a_3,a_4,b_0,b_1,b_2,b_3,b_4,b_5,b_6].
\end{align}
Proposition~\ref{prop:abindep} means that $\cR$ is freely generated.
With emphasis on this point, we recall the main theorem of
the author's previous work:
\begin{thm}[{\cite[Theorem 3.1]{Sakai:2022taq}}]\label{thm:JE8inR}
The ring of $W(E_8)$-invariant weak Jacobi forms is
a proper subring of the polynomial ring $\cR$
\begin{align}
J^{E_8}_{*,*}\subsetneq\cR.
\end{align}
In other words,
every $W(E_8)$-invariant weak Jacobi form is expressed
uniquely as a polynomial of
$a_0,a_2,a_3,a_4,b_0,b_1,b_2,b_3,b_4,b_5,b_6$ over $\bbC$.
\end{thm}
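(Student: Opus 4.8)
The statement packages three assertions: every $W(E_8)$-invariant weak Jacobi form lies in $\cR$; the inclusion $J^{E_8}_{*,*}\subseteq\cR$ is proper; and the resulting polynomial representation is unique. The last is automatic once the first is known, since by Proposition~\ref{prop:abindep} the ring $\cR$ is freely generated, so there are no relations among $a_0,a_2,a_3,a_4,b_0,\dots,b_6$. Properness is also quick: by Proposition~\ref{prop:abprop}(2) the generator $a_2\in\cR$ is a meromorphic Jacobi form with genuine poles along the zero locus of $E_4$ in $\bbH$, hence is not holomorphic on $\bbH\times\bbC^8$ and so is not a weak Jacobi form; alternatively, $\cR$ is a polynomial ring whereas $J^{E_8}_{*,*}$ is not \cite{Wang:2018fil}. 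So the substance is the inclusion $J^{E_8}_{*,*}\subseteq\cR$, and the plan is to run Sun and Wang's canonical form (Theorem~\ref{thm:SunWang}(3)) through $\cR$.

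The formal ingredients come first. Since $E_4=12a_0$ and $E_6=216b_0$ we have $E_4,E_6\in\cR$, hence $\Delta=\tfrac{1}{1728}(E_4^3-E_6^2)=a_0^3-27b_0^2\in\cR$; and by Proposition~\ref{prop:ABinab} the nine basic forms $A_i,B_j$ lie in $\cR$, so $P_{16,5}$ of~\eqref{eq:P16c5} lies in $\cR$. The first genuinely substantive point is that $P_{16,5}/E_4\in\cR$: because $\cR$ is a polynomial ring, hence a UFD, and $a_0$ is prime in it, this is equivalent to $a_0\mid P_{16,5}$ in $\cR$, which one settles by substituting the explicit polynomial expressions of $A_i,B_j$ in $a_0,\dots,b_6$ into~\eqref{eq:P16c5} and carrying out the division; the quotient is an explicit element of $\cR$ of bidegree $(12,5)$, and this division is precisely the concrete content of Theorem~\ref{thm:SunWang}(1). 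Now let $\varphi\in J^{E_8}_{k,m}$ be arbitrary. Theorem~\ref{thm:SunWang}(3) presents it as $\varphi=N/\Delta^{N_t}$ with $N=\sum_{j=0}^{t_1}(P_{16,5}/E_4)^{t_1-j}P_j$, where every $P_j$ lies in $\bbC[E_4,E_6,\{A_i\},\{B_j\}]\subseteq\cR$; hence $N\in\cR$, and since $\cR$ is a UFD the whole theorem comes down to the single divisibility $\Delta^{N_t}\mid N$ in $\cR$.

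I expect that divisibility to be the main obstacle. It does not follow from $\varphi=N/\Delta^{N_t}$ being holomorphic: $\Delta=\eta^{24}$ is nowhere zero on $\bbH\times\bbC^8$, so the quotient is holomorphic whatever $N$ is; what must be exploited is that $\varphi$ is a \emph{weak} Jacobi form, i.e.\ that its Fourier expansion has no negative powers of $q$. A $q$-expansion comparison --- $\Delta$ vanishes to order exactly $1$ at the cusp $q=0$, whereas each generator $a_i,b_j$ has a nonzero leading Fourier coefficient --- yields only that $N=\Delta^{N_t}\varphi$ vanishes to $q$-order at least $N_t$, which is strictly weaker than $\Delta^{N_t}\mid N$ in $\cR$, since the $q$-adic and $\Delta$-adic valuations on $\cR$ do not agree. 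Upgrading this is the real work: one knows a priori that $\varphi\in\cR[\Delta^{-1}]$, and $\cR$, being normal, is the intersection of its localizations at height-one primes, so it suffices to bound the $\Delta$-adic valuation of $\varphi$ from below, i.e.\ to control $N$ modulo powers of $\Delta$. A workable route is a suitable induction combined with a careful Fourier-expansion analysis at the cusp, using that a Jacobi form whose $q$-expansion has vanishing constant term is $\Delta$ times a Jacobi form and arranging the induction so that all intermediate quotients stay inside $\cR$; making this precise --- reconciling the analytic order at the cusp with $\Delta$-adic divisibility in the polynomial ring $\cR$ --- is where the effort concentrates.
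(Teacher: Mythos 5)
Your reduction is set up correctly and matches the paper's route: uniqueness follows from Proposition~\ref{prop:abindep}, properness from the poles of $a_2$ (or from Wang's non-polynomiality result), and the inclusion $J^{E_8}_{*,*}\subseteq\cR$ reduces via Theorem~\ref{thm:SunWang}\,(3), Proposition~\ref{prop:ABinab} and the explicit divisibility $a_0\mid P_{16,5}$ to the single statement that $\Delta^{N_t}$ divides $N=\Delta^{N_t}\varphi$ in the UFD $\cR$. You also correctly diagnose why this is nontrivial: $\Delta$ is nowhere zero on $\bbH$, so holomorphy of $\varphi$ gives nothing, and the $q$-adic order of $N$ at the cusp does not by itself control the $\Delta$-adic valuation in $\cR$. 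But at exactly that point the proposal stops: ``a suitable induction combined with a careful Fourier-expansion analysis \dots is where the effort concentrates'' names the difficulty without resolving it, and the observation that a weak Jacobi form with vanishing $q^0$-term is $\Delta$ times a weak Jacobi form only reproves holomorphy of $\Delta^{n-1}\varphi$; it does not place it back in $\cR$. This is the entire content of the theorem, so the proof is incomplete.

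The missing ingredient is the one the paper invokes (in the sketch given for the twin Theorem~\ref{thm:JE8intR}, following \cite[Proposition 2.5 and Theorem 3.1]{Sakai:2022taq}): the $O(q^0)$ Fourier coefficients of $a_2,a_3,a_4,b_2,\ldots,b_6$ are algebraically independent functions of $\vecz$, while $a_0^{(0)},b_0^{(0)},b_1^{(0)}$ are the constants $\tfrac1{12},\tfrac1{216},-4$. Granting this, the induction closes: if $P=\Delta^{n}\varphi\in\cR$ with $n\ge 1$, then the $q^0$-coefficient of $P$ vanishes (since $\Delta^{n-1}\varphi$ has a regular $q$-expansion and $\Delta=q+O(q^2)$), so by the algebraic independence the polynomial $P$ vanishes identically on the $8$-plane $\{a_0=\tfrac1{12},\,b_0=\tfrac1{216},\,b_1=-4\}$; since $P$ is bihomogeneous for the weight--index grading, it then vanishes on the closure of the two-torus orbit of that plane, which is the irreducible hypersurface $V(a_0^3-27b_0^2)=V(\Delta)$, whence $\Delta\mid P$ in $\cR$ and $\Delta^{n-1}\varphi\in\cR$. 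Without the algebraic-independence input (or some substitute), the step from ``$q^0$-coefficient vanishes as a function on $\bbH\times\bbC^8$'' to ``$P$ lies in the ideal $(\Delta)$ of the polynomial ring'' is exactly the unproven heart of the theorem, so as written the proposal has a genuine gap rather than a complete alternative proof.
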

%

\subsection{Intersection of two polynomial rings}

The goal of this subsection is to prove Theorem~\ref{thm:intersec},
which is one of the main theorems of this paper.

As mentioned in the last subsection,
we can set $a_1=0$ in the Seiberg--Witten curve
without loss of generality.
However, there is another natural choice.
Let us consider the translation of $u$
\begin{align}
\label{eq:u1trans}
u=\tu+u_1
\end{align}
such that
the Seiberg--Witten curve \eqref{eq:SWcurveab} is rewritten
in the form
\begin{align}
y^2=4x^3-\sum_{i=0}^4 c_i\tu^{4-i}x-\sum_{j=0}^6 d_j\tu^{6-j}
\label{eq:SWcurvecd}
\end{align}
with
\begin{align}
d_1=0.
\end{align}
This is done by setting
\begin{align}
u_1=-\frac{b_1}{6b_0}.
\end{align}
It is easy to see that $a_i,b_j$ and $c_k,d_l$ are related as
\begin{align}
a_i&=\sum_{j=0}^i c_j
 \begin{pmatrix} 4-j\\[1ex] 4-i\end{pmatrix}
 \left(-\frac{c_1}{4c_0}\right)^{i-j},\quad&&
b_i=\sum_{j=0}^i d_j
 \begin{pmatrix} 6-j\\[1ex] 6-i\end{pmatrix}
 \left(-\frac{c_1}{4c_0}\right)^{i-j}.
\label{eq:abincd}
\end{align}
The inverse mapping is written as
\begin{align}
c_i&=\sum_{j=0}^i a_j
 \begin{pmatrix} 4-j\\[1ex] 4-i\end{pmatrix}
 \left(-\frac{b_1}{6b_0}\right)^{i-j},&&
d_i=\sum_{j=0}^i b_j
 \begin{pmatrix} 6-j\\[1ex] 6-i\end{pmatrix}
 \left(-\frac{b_1}{6b_0}\right)^{i-j}.
\end{align}
The explicit forms of $c_i,d_j$ are presented
in \eqref{eq:cdinABfull}.
In what follows we see that
$\{c_i,d_j\}$ form another `basis',
which is complementary to $\{a_i,b_j\}$ in an interesting way.
The main point is that $c_i,d_j$ are holomorphic
at the zero points of $E_4$.
\begin{prop}[Properties of $c_i,d_j$]
\label{prop:cdprop}
\renewcommand{\theenumi}{\arabic{enumi}}
\renewcommand{\labelenumi}{$(\theenumi)$}
~\\[-4ex]
\begin{enumerate}
\item Each of $c_i,d_j$ is homogeneous with respect to
the bigrading:
$c_i$ is of weight $4-6i$ and index $i$;
$d_j$ is of weight $6-6j$ and index $j$.

\item The functions
$c_1,\ldots,c_4,d_2,\ldots,d_6$ possess all the properties of
$W(E_8)$-invariant Jacobi forms, except that
they have poles at the zero points of $E_6$.
When multiplied by a certain power of $E_6$,
they give $W(E_8)$-invariant Jacobi forms:
\begin{align}
E_6^i c_i\in J^{E_8}_{4,i},\qquad
E_6^{j-1} d_j\in J^{E_8}_{0,j}.
\end{align}

\item The functions $c_i,d_j$ admit a Fourier expansion
of the form
\begin{align}
c_i(\tau,\vecz)=\sum_{n=0}^\infty c_i^{(n)}(\vecz)q^n,\qquad
d_j(\tau,\vecz)=\sum_{n=0}^\infty d_j^{(n)}(\vecz)q^n.
\end{align}
\end{enumerate}
\end{prop}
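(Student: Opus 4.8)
The plan is to establish the three items of Proposition~\ref{prop:cdprop} essentially by transporting the corresponding statements for $a_i,b_j$ (Proposition~\ref{prop:abprop}) through the explicit change of variables \eqref{eq:abincd}. First I would treat item~(1). Since $c_0=a_0=E_4/12$ is of weight $4$ and index $0$, and $d_0=b_0=E_6/216$ is of weight $6$ and index $0$, the quantity $-b_1/(6b_0)$ that defines $u_1$ is of weight $(6-6)-(6)=-6$ and index $1-0=1$; call this bidegree $(-6,1)$. In the defining formula for $c_i$, each summand $a_j\binom{4-j}{4-i}(-b_1/6b_0)^{i-j}$ has weight $(4-6j)+(i-j)(-6)=4-6i$ and index $j+(i-j)=i$, so every term carries the same bidegree $(4-6i,i)$; hence $c_i$ is homogeneous of weight $4-6i$ and index $i$. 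The same bookkeeping with $6$ in place of $4$ gives that $d_j$ is homogeneous of weight $6-6j$ and index $j$. This is a purely mechanical check once the bidegree of $u_1$ is pinned down.

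Next I would do item~(3), the existence of a Fourier expansion in nonnegative powers of $q$. The key point is that $b_0=E_6/216=1/216+O(q)$ has an invertible constant term as a power series in $q$, so $1/b_0$ lies in $\bbC[[q]]$ (with coefficients that are functions of $\vecz$), and therefore $u_1=-b_1/(6b_0)$ has a Fourier expansion $\sum_{n\ge 0}u_1^{(n)}(\vecz)q^n$ by Proposition~\ref{prop:abprop}(3). Substituting this and the $q$-expansions of the $a_j$ into \eqref{eq:abincd} and multiplying out, each $c_i$ and $d_j$ is a polynomial expression in things that all lie in $\bbC[[q]]$, hence itself has an expansion $\sum_{n\ge 0}c_i^{(n)}(\vecz)q^n$, resp.\ $\sum_{n\ge 0}d_j^{(n)}(\vecz)q^n$. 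No negative powers of $q$ are introduced because the only denominator is a power of $b_0$, whose leading coefficient is the nonzero constant $1/216$.

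The substantive claim is item~(2): that $c_1,\ldots,c_4,d_2,\ldots,d_6$ satisfy all the axioms of $W(E_8)$-invariant Jacobi forms except holomorphicity, that their only poles are along the zero locus of $E_6$, and the sharper statement that $E_6^i c_i$ and $E_6^{j-1}d_j$ are genuine holomorphic Jacobi forms of the stated weight and index. Weyl invariance, quasi-periodicity, and the modular transformation law for $c_i,d_j$ are inherited from $a_i,b_j$ because \eqref{eq:abincd} expresses them as $\bbC$-polynomial combinations of $a_j$, $b_1$, and $1/b_0$, and $b_0=E_6/216$ is itself a (nonvanishing-away-from-its-zeros) modular quantity of weight $6$ and index $0$; the index-$0$ feature is what keeps the exponential automorphy factors from being disturbed by the $1/b_0$ factors, since an index-$0$ form transforms by a pure power of $(c\tau+d)$. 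For the pole structure: by Proposition~\ref{prop:abprop}(2) the $a_i,b_j$ are meromorphic Jacobi forms whose poles lie only over the zeros of $E_4$, and $1/b_0$ contributes poles only over the zeros of $E_6$; I would then argue that the $E_4$-poles cancel in the combinations \eqref{eq:abincd}, the cleanest route being to invoke (or read off from) the explicit expressions \eqref{eq:cdinABfull} in terms of $A_i,B_j,E_4,E_6$ promised in the appendix, from which one checks directly that $E_6^i c_i$ and $E_6^{j-1}d_j$ are polynomials in $A_i,B_j,E_4,E_6$ — hence lie in $J^{E_8}_{4,i}$ and $J^{E_8}_{0,j}$ respectively, the weights and indices being forced by item~(1) together with the bidegree $(6,0)$ of $E_6$. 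The main obstacle, and the only place real content enters rather than bookkeeping, is verifying that no $E_4$ in a denominator survives — i.e.\ that multiplication by the \emph{stated} power of $E_6$ (and nothing involving $E_4$) already clears all denominators; I expect this to follow transparently once \eqref{eq:cdinABfull} is in hand, but it is the step that genuinely uses the special form of the Seiberg--Witten coefficients rather than formal manipulation.
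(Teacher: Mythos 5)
Your overall strategy (transport Proposition~\ref{prop:abprop} through \eqref{eq:abincd}, with the bidegree bookkeeping for item~(1) and the $1/b_0\in\bbC[[q]]$ argument for item~(3)) is sound and matches what the paper implicitly does; items (1) and (3) are fine. The gap is in item~(2), at exactly the step you flag as the ``main obstacle'' and then wave off. You expect that, once \eqref{eq:cdinABfull} is in hand, it will be \emph{transparent} that multiplying by the stated power of $E_6$ clears all $E_4$-denominators. For $c_1,\ldots,c_4,d_2,d_3,d_4,d_6$ this is indeed visible by inspection. But for $d_5$ it is false: the explicit formula \eqref{eq:cdinABfull} still contains the term $P_{16,5}/(72E_4\Delta^3)$, with an $E_4$ surviving in the denominator. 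The holomorphy of $P_{16,5}/E_4$ is precisely Theorem~\ref{thm:SunWang}(1) --- originally a conjecture of Del~Zotto et al., proved by Sun and Wang --- and it does not follow from any formal manipulation of the expressions. The paper devotes the remark immediately after the proposition to exactly this point. Your proof plan, as written, would stall at $d_5$ without this citation, so the missing ingredient is the invocation of Theorem~\ref{thm:SunWang}(1).

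A smaller inaccuracy: you assert that $E_6^i c_i$ and $E_6^{j-1}d_j$ are ``polynomials in $A_i,B_j,E_4,E_6$.'' They are not --- every one of them (beyond $c_0,c_1,d_0$) carries powers of $\Delta$ in the denominator. This is harmless for holomorphy on $\bbH\times\bbC^8$ since $\Delta$ is nonvanishing there, but it means the weak-form property (no negative powers of $q$) is \emph{not} automatic from that description; it has to come from your item~(3) argument (or equivalently from the divisibility of the numerators by the appropriate power of $\Delta$ in $\bbC[[q]]$), so you should make that dependence explicit rather than deducing membership in $J^{E_8}_{4,i}$ directly from the (incorrect) polynomiality claim.
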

\begin{rem}
As we see in \eqref{eq:cdinABfull},
$d_5$ contains the term $P_{16,5}/(72E_4\Delta^3)$,
which has apparent poles at the zero points of $E_4$,
but is actually holomorphic by Theorem~\ref{thm:SunWang} (1).
\end{rem}
\begin{prop}\label{prop:cdindep}
The functions
$c_0,\ldots,c_4,d_0,d_2,\ldots,d_6$
are algebraically independent over $\bbC$.
\end{prop}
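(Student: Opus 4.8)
The plan is to reduce the claim to the already-established algebraic independence of $a_0,a_2,a_3,a_4,b_0,\ldots,b_6$ (Proposition~\ref{prop:abindep}). The key observation is that the change of variables \eqref{eq:abincd}--and its inverse--is a birational map: the formulas express each $c_i$ as a polynomial in $a_0,\ldots,b_6$ divided by a power of $b_0$, and conversely each $a_i$ as a polynomial in the $c$'s and $d$'s divided by a power of $c_0$. Since $c_0=a_0=E_4/12$ and $d_0=b_0=E_6/216$ are not identically zero, these substitutions define mutually inverse isomorphisms between the localizations $\bbC[a_0,a_2,a_3,a_4,b_0,\ldots,b_6][b_0^{-1}]$ and $\bbC[c_0,\ldots,c_4,d_0,d_2,\ldots,d_6][c_0^{-1}]$ as subfields of the field of meromorphic functions on $\bbH\times\bbC^8$.

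First I would set up the purely algebraic statement: let $K$ be the fraction field of $\bbC[a_0,a_2,a_3,a_4,b_0,\ldots,b_6]$, which by Proposition~\ref{prop:abindep} has transcendence degree $11$ over $\bbC$. Using \eqref{eq:abincd} I would observe that $c_1,\ldots,c_4,d_1,\ldots,d_6$ all lie in $K$ (they are polynomials in $a_i,b_j$ with coefficients in $\bbC[b_0^{-1}]$, via $u_1=-b_1/(6b_0)$), together with $c_0=a_0$ and $d_0=b_0$. Conversely, \eqref{eq:abincd} expresses $a_i,b_j$ as polynomials in $c_k,d_l$ over $\bbC[c_0^{-1}]$, so $K$ is also generated over $\bbC$ by $c_0,\ldots,c_4,d_0,\ldots,d_6$. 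Now $d_1=0$, so in fact $K=\bbC(c_0,c_1,c_2,c_3,c_4,d_0,d_2,d_3,d_4,d_5,d_6)$, a field generated by $11$ elements and of transcendence degree $11$. A field extension of $\bbC$ generated by $n$ elements and of transcendence degree $n$ forces those generators to be algebraically independent; hence $c_0,\ldots,c_4,d_0,d_2,\ldots,d_6$ are algebraically independent over $\bbC$.

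The one point that needs care--and I expect it to be the main (though minor) obstacle--is checking that the transcendence degree really drops by exactly one when passing from the twelve functions $\{a_i,b_j\}_{i=0,\ldots,4;\,j=0,\ldots,6}$ (with $a_1$ not yet set to $0$) to the eleven functions $\{c_i,d_j\}$ with $d_1=0$. This is where the specific normalization $u_1=-b_1/(6b_0)$ enters: the translation \eqref{eq:u1trans} is chosen precisely so that $d_1$ vanishes identically, which is the algebraic shadow of the fact that a generic quartic can be depressed (made to have vanishing $u^5$-coefficient, here $u^3$-coefficient) by a unique shift. Since we already took $a_1=0$ in \eqref{eq:SWcurveab}, one must verify that the generators genuinely in play are $a_0,a_2,a_3,a_4,b_0,\ldots,b_6$ on one side and $c_0,\ldots,c_4,d_0,d_2,\ldots,d_6$ on the other--eleven each--and that the birational transformation \eqref{eq:abincd} matches them up. Alternatively, and perhaps more cleanly, I would compute the Jacobian determinant of $(c_1,c_2,c_3,c_4,d_2,\ldots,d_6)$ with respect to $(A_1,A_2,B_2,A_3,B_3,A_4,B_4,A_5,B_6)$ directly from \eqref{eq:cdinABfull}, analogously to the proof of Proposition~\ref{prop:abindep}; if this determinant is a nonzero rational function of $\tau$ (times a power of $E_6$), then Proposition~\ref{prop:ABindep} immediately yields the algebraic independence of $c_1,\ldots,c_4,d_2,\ldots,d_6$ over $\bbC[E_4,E_6]=\bbC[c_0,d_0]$, which is exactly the assertion. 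I would present the Jacobian computation as the proof, since it parallels the preceding proposition and avoids any subtlety about counting generators.
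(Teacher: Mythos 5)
Your proposal is correct and matches the paper: the paper's proof is precisely the one-line reduction ``by the relation \eqref{eq:abincd} between $a_i,b_j$ and $c_k,d_l$, the proposition directly follows from Proposition~\ref{prop:abindep},'' for which your transcendence-degree argument (eleven generators on each side of a birational change of variables) is the natural fleshing-out. Your alternative Jacobian computation is not used as the proof in the paper but appears as the remark immediately following the proposition, where the determinant is recorded as $-1179648\,\Phi_{E_8}/E_6$.
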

\begin{proof}
By the relation \eqref{eq:abincd}
between $a_i,b_j$ and $c_k,d_l$,
the proposition directly follows from Proposition~\ref{prop:abindep}.
\end{proof}
\begin{rem}
Analogously to Remark~\ref{rem:abJacobian},
for $c_i,d_j$ we have
\begin{align}
\begin{aligned}
&\left|\begin{array}{ccccccccc}
c_1&2c_2&3c_3&4c_4&2d_2&3d_3&4d_4&5d_5&6d_6\\[1ex]
\frac{1}{2\pi i}\frac{\partial c_1}{\partial z_1}&
\frac{1}{2\pi i}\frac{\partial c_2}{\partial z_1}&
\frac{1}{2\pi i}\frac{\partial c_3}{\partial z_1}&
\frac{1}{2\pi i}\frac{\partial c_4}{\partial z_1}&
\frac{1}{2\pi i}\frac{\partial d_2}{\partial z_1}&
\frac{1}{2\pi i}\frac{\partial d_3}{\partial z_1}&
\frac{1}{2\pi i}\frac{\partial d_4}{\partial z_1}&
\frac{1}{2\pi i}\frac{\partial d_5}{\partial z_1}&
\frac{1}{2\pi i}\frac{\partial d_6}{\partial z_1}\\[1ex]
\vdots&\vdots&\vdots&\vdots&\vdots&\vdots&\vdots&\vdots&\vdots\\[1ex]
\frac{1}{2\pi i}\frac{\partial c_1}{\partial z_1}&
\frac{1}{2\pi i}\frac{\partial c_2}{\partial z_8}&
\frac{1}{2\pi i}\frac{\partial c_3}{\partial z_8}&
\frac{1}{2\pi i}\frac{\partial c_4}{\partial z_8}&
\frac{1}{2\pi i}\frac{\partial d_2}{\partial z_8}&
\frac{1}{2\pi i}\frac{\partial d_3}{\partial z_8}&
\frac{1}{2\pi i}\frac{\partial d_4}{\partial z_8}&
\frac{1}{2\pi i}\frac{\partial d_5}{\partial z_8}&
\frac{1}{2\pi i}\frac{\partial d_6}{\partial z_8}
      \end{array}\right|\\[1ex]
&=-\frac{2^{18}}{E_6}\Phi_{E_8}.
\end{aligned}
\end{align}
\end{rem}
\begin{prop}\label{prop:ABincd}
The nine basic Jacobi forms
$A_i,B_j$ are polynomials of
$c_0,\ldots,c_4,$ $d_0,$ $d_2,\ldots,d_6$.
\end{prop}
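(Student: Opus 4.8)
The strategy is to reduce the statement to Proposition~\ref{prop:ABinab} through the change of frame \eqref{eq:abincd}. By that proposition each of the nine basic forms $A_i,B_j$ is a polynomial in $a_0,a_2,a_3,a_4,b_0,b_1,\ldots,b_6$. Substituting into such a polynomial the relations \eqref{eq:abincd}, which (using $d_1=0$) express every $a_i$ and $b_j$ as a polynomial in $c_0,\ldots,c_4,d_0,d_2,\ldots,d_6$ with coefficients in $\bbC[1/c_0]$, exhibits each $A_i,B_j$ as an element of the localization $\tcR[1/c_0]$, where $\tcR:=\bbC[c_0,\ldots,c_4,d_0,d_2,\ldots,d_6]$; say $A_i=P/c_0^{\,N}$ with $P\in\tcR$ and $N\ge 0$ minimal, and likewise for $B_j$. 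The content of the proposition is then exactly that $N=0$ in every case, i.e.\ that the spurious powers of $c_0$ in the denominator cancel.

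I would carry this out by performing the substitution explicitly, starting from the closed forms of $c_i,d_j$ recorded in \eqref{eq:cdinABfull}. This is a finite computation, since only forms of index at most $6$ are involved, and it produces closed formulas for $A_i,B_j$ as polynomials in $c_0,\ldots,c_4,d_0,d_2,\ldots,d_6$, in complete parallel with the inversion \eqref{eq:ABinab} used in the proof of Proposition~\ref{prop:ABinab}. The \emph{a priori} reason the cancellation must happen is holomorphicity at the zeros of $E_4$: the function $c_0=a_0=E_4/12$ depends on $\tau$ alone and vanishes to first order at $\tau=\rho:=e^{2\pi\ri/3}$ and its $\grp{SL}_2(\bbZ)$-translates, while by Proposition~\ref{prop:cdprop} each of $c_1,\ldots,c_4,d_0,d_2,\ldots,d_6$ is holomorphic there (their only poles lie over zeros of $E_6$, and $E_6(\rho)\ne 0$), and $A_i,B_j$ are holomorphic on all of $\bbH\times\bbC^8$. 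Hence if $N\ge 1$, the identity $P=c_0^{\,N}A_i$ shows that $P$, reduced modulo $c_0$, vanishes identically along $\{\tau=\rho\}\times\bbC^8$; combined with the bigrading, which limits $P$ to the finitely many monomials of its weight and index, and with the algebraic independence of $c_0,\ldots,d_6$ (Proposition~\ref{prop:cdindep}), this contradicts $c_0\nmid P$.

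The main obstacle is turning this last heuristic into a clean self-contained argument — or, short of that, arranging the explicit substitution so that the cancellation is visibly automatic: the shift parameter $-c_1/(4c_0)$ enters the polynomial expressions for $A_i,B_j$ through fairly large binomial sums, and one must check that every negative power of $c_0$ drops out in the end. I expect the cleanest write-up simply mirrors Proposition~\ref{prop:ABinab} — exhibit the inverted formulas and note that they are polynomial — with the holomorphicity remark above indicating why no obstruction can arise. (This proposition is weaker than, and serves as an ingredient in, Theorem~\ref{thm:intersec}, where the analogous containment is proved for all of $J^{E_8}_{*,*}$ rather than only for the nine generators.)
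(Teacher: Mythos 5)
Your primary route is the paper's: the proof there consists of exactly the ``direct calculation'' you describe, recording the resulting polynomial identities as \eqref{eq:ABincd} in the appendix (the paper obtains them by inverting \eqref{eq:cdinABfull}, whereas you propose substituting \eqref{eq:abincd} into \eqref{eq:ABinab}; either finite computation verifies that all powers of $c_0$ in the denominators cancel and yields the same closed formulas). So the proposal is acceptable as it stands, provided the computation is actually carried out.

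Be aware, though, that the ``a priori reason'' you offer for the cancellation has a genuine gap and cannot be promoted to a proof as written. To pass from ``$P$ vanishes identically along $\{\tau=\rho\}\times\bbC^8$'' to ``$c_0\mid P$ in $\tcR$'' you need the \emph{restrictions} of $c_1,\ldots,c_4,d_0,d_2,\ldots,d_6$ to that slice to satisfy no polynomial relations. Proposition~\ref{prop:cdindep} asserts algebraic independence of the eleven generators only as functions on all of $\bbH\times\bbC^8$; specializing $\tau$ can create new relations, and after restriction you are looking at ten functions of only the eight variables $\vecz$, so nothing you cite excludes a nonzero homogeneous $Q\in\bbC[c_1,\ldots,c_4,d_0,d_2,\ldots,d_6]$ whose associated function vanishes identically at $\tau=\rho$ without being divisible by $c_0$. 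The bigrading does not close this hole, and since the zeros of $E_4$ form a single $\grp{SL}_2(\bbZ)$-orbit, replacing the single slice by the full divisor $\{E_4=0\}$ adds nothing. Keep the holomorphicity observation as motivation if you like, but let the explicit inverted formulas carry the proof, as the paper does.
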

\begin{proof}
By direct calculation
the expressions of $c_i,d_j$ can be inverted as in \eqref{eq:ABincd}.
\end{proof}

Let $\tcR$ be the polynomial ring
\begin{align}
\tcR:=\bbC[c_0,c_1,c_2,c_3,c_4,d_0,d_2,d_3,d_4,d_5,d_6].
\end{align}
By Proposition~\ref{prop:cdindep}, $\tcR$ is freely generated.
Analogously to Theorem~\ref{thm:JE8inR},
the following theorem holds:
\begin{thm}\label{thm:JE8intR}
The ring of $W(E_8)$-invariant weak Jacobi forms is
a proper subring of the polynomial ring $\tcR$
\begin{align}
J^{E_8}_{*,*}\subsetneq\tcR.
\end{align}
In other words,
every $W(E_8)$-invariant weak Jacobi form is expressed
uniquely as a polynomial of
$c_0,c_1,c_2,c_3,c_4,d_0,d_2,d_3,d_4,d_5,d_6$
over $\bbC$.
\end{thm}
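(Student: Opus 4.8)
The plan is to mimic the proof of Theorem~\ref{thm:JE8inR} and reduce the present statement to it via the invertible change of variables \eqref{eq:abincd}. First I would observe that, by Propositions~\ref{prop:cdindep} and \ref{prop:ABincd} together with Proposition~\ref{prop:ABinab}, the two polynomial rings $\cR=\bbC[a_0,a_2,a_3,a_4,b_0,\ldots,b_6]$ and $\tcR=\bbC[c_0,\ldots,c_4,d_0,d_2,\ldots,d_6]$ both contain the nine basic Jacobi forms $A_i,B_j$ and the modular forms $E_4,E_6$, hence both contain every element of $\bbC[E_4,E_6,\{A_i\},\{B_j\}]$. By Theorem~\ref{thm:SunWang}(3), every $W(E_8)$-invariant weak Jacobi form is a $\Delta$-denominated combination of $P_{16,5}/E_4$ and elements of $\bbC[E_4,E_6,\{A_i\},\{B_j\}]$; since $\Delta=(A_i^3-E_6^2)$-type expressions and $P_{16,5}/E_4$ lie in $\tcR$ (the latter because $d_5$ already contains $P_{16,5}/(72E_4\Delta^3)$ and is a polynomial generator of $\tcR$), one concludes $J^{E_8}_{*,*}\subseteq\tcR$. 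The properness $J^{E_8}_{*,*}\ne\tcR$ is immediate once the inclusion and uniqueness are established, for instance because $c_0=a_0=E_4/12\notin$, wait—more simply because $\tcR\cong\cR$ as graded rings under \eqref{eq:abincd} and $J^{E_8}_{*,*}\subsetneq\cR$ by Theorem~\ref{thm:JE8inR}, so the image of $J^{E_8}_{*,*}$ cannot be all of $\tcR$.

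Alternatively, and perhaps more cleanly, I would argue directly from Theorem~\ref{thm:JE8inR}. The relations \eqref{eq:abincd} express each $a_i,b_j$ as a polynomial in $c_0,\ldots,c_4,d_0,\ldots,d_6$ with coefficients that are polynomials in $c_0^{-1}$; but in fact, reading \eqref{eq:abincd} the other way, each $a_i$ and $b_j$ is a polynomial in the $c_k,d_l$ \emph{divided by a power of} $c_0$, and symmetrically each $c_i,d_j$ is a polynomial in the $a_k,b_l$ divided by a power of $b_0$. The key algebraic fact I would extract is: inside the common fraction field $\bbC(c_0,\ldots,d_6)=\bbC(a_0,\ldots,b_6)$, one has $\cR\subseteq\tcR[c_0^{-1}]$ and $\tcR\subseteq\cR[b_0^{-1}]$. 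A $W(E_8)$-invariant weak Jacobi form $\varphi$ lies in $\cR$ by Theorem~\ref{thm:JE8inR}; I must show it lies in $\tcR$. Write $\varphi$ as a polynomial in the $c_k,d_l$ over $\bbC[b_0^{-1}]$, i.e.\ $b_0^N\varphi\in\tcR$ for some $N\ge 0$; the task is to remove the $b_0$ denominators.

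To remove those denominators I would use the Fourier/$q$-expansion structure, exactly as in \cite{Sakai:2022taq}. Every $c_i,d_j$ admits a $q$-expansion $c_i=\sum_n c_i^{(n)}q^n$, $d_j=\sum_n d_j^{(n)}q^n$ (Proposition~\ref{prop:cdprop}(3)), and $b_0=E_6/216$ has $q$-expansion starting with a nonzero constant; moreover $\varphi$, being a weak Jacobi form, has a $q$-expansion with no negative powers of $q$ and with $\vecz$-dependence controlled by the lattice. I would compare $q$-expansions order by order: if $b_0^N\varphi=Q(c_k,d_l)$ with $Q\in\tcR$ and $N\ge 1$, then reducing modulo $q$ (i.e.\ setting $q=0$) and using that $b_0^{(0)}=E_6(0)/216\ne 0$ shows $Q$ vanishes on the locus $\{c_k^{(0)}=\cdots\}$ cut out by the leading coefficients, forcing $Q$ to be divisible (as a polynomial) by whatever the $q=0$ reduction of $b_0$ is in $\tcR$; iterating the division strips off the $b_0$ factors and yields $\varphi\in\tcR$. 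The main obstacle is exactly this divisibility/integrality step: one needs that the ideal-theoretic relation between $b_0$ and the generators of $\tcR$ is well enough understood — concretely, that $b_0$ is a prime (or at least squarefree) element of $\tcR$ and that $\varphi\in\cR\cap\tcR[b_0^{-1}]$ actually implies $\varphi\in\tcR$, i.e.\ that $\tcR$ is integrally closed (it is, being a polynomial ring) and that $b_0$ does not divide $\varphi$ in $\tcR[b_0^{-1}]$ spuriously. Since $\tcR$ is a UFD and $b_0$ is irreducible in it (it is one of a transcendence basis, algebraically independent of the others by Proposition~\ref{prop:cdindep}), the valuation $v_{b_0}$ is well defined, and $v_{b_0}(\varphi)\ge 0$ because $\varphi\in\cR$ and $\cR\subseteq\tcR[c_0^{-1}]$ has no $b_0$ in the denominator — completing the inclusion. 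Finally, uniqueness of the expression is just the algebraic independence in Proposition~\ref{prop:cdindep}, and properness follows since the isomorphism $\cR\cong\tcR$ of \eqref{eq:abincd} carries $J^{E_8}_{*,*}$ into a proper subring by Theorem~\ref{thm:JE8inR}.
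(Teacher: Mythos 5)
Your overall skeleton (Sun--Wang's canonical form, plus the fact that $A_i,B_j$ and $P_{16,5}/E_4$ are polynomials in $c_k,d_l$) is the same as the paper's, but the one genuinely hard step --- clearing the denominator --- is wrong in both of your variants. In the first variant you conclude $J^{E_8}_{*,*}\subseteq\tcR$ from the fact that the numerator of \eqref{eq:SunWangform} and $\Delta$ both lie in $\tcR$; but $\Delta^{N_t}$ sits in the \emph{denominator}, so all this gives is $\Delta^{N_t}\varphi\in\tcR$, and the entire content of the theorem is that one can then divide by $\Delta^{N_t}$ inside $\tcR$. That is exactly the step the paper spends its effort on: since $\Delta=c_0^3-27d_0^2$ and $(c_0^{(0)},d_0^{(0)})=(1/12,1/216)$ satisfy $(1/12)^3=27(1/216)^2$, the $q^0$ term of $\Delta$ vanishes, so setting $q=0$ in $\Delta\,\psi=Q(c_k,d_l)$ forces a nontrivial vanishing of $Q$ on the locus cut out by the leading Fourier coefficients; the algebraic independence of the $O(q^0)$ parts of $c_2,c_3,c_4,d_2,\ldots,d_6$ then upgrades this to polynomial divisibility of $Q$ by $\Delta$, and one iterates $N_t$ times.

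Your second variant does not repair this. First, the direction is backwards: from $\varphi\in\cR$ and $\cR\subseteq\tcR[c_0^{-1}]$ you get $c_0^{N}\varphi\in\tcR$, not $b_0^{N}\varphi\in\tcR$. Second, and more seriously, the valuation argument is circular: the $c_0$-adic valuation defined by the UFD $\tcR$ is \emph{not} the $a_0$-adic valuation defined by the UFD $\cR$, even though $c_0=a_0$ as functions. For instance $c_1=-2a_0b_1/(3b_0)$ is a prime generator of $\tcR$ coprime to $c_0$, yet it is divisible by $a_0$ in $\cR[b_0^{-1}]$; the local rings $\cR_{(a_0)}$ and $\tcR_{(c_0)}$ are different subrings of the common fraction field, so membership in $\cR$ gives no bound on $v_{c_0}$ computed in $\tcR$. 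Finally, your proposed $q$-expansion fix fails for $b_0$ or $c_0$ precisely because $b_0^{(0)}=1/216\ne 0$ and $c_0^{(0)}=1/12\ne 0$: setting $q=0$ yields no vanishing condition at all, so there is nothing to divide by. The analytic input that actually closes the gap is either the paper's $\Delta$-stripping induction or, equivalently, the holomorphy of $\varphi$ at the zeros of $E_4$ combined with a Zariski-density statement --- neither of which your proposal supplies.
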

\begin{proof}
This theorem can be proved in the same way
as Theorem~\ref{thm:JE8inR}.
The reader is referred to \cite[Theorem 3.1]{Sakai:2022taq}.
A sketch of the proof is as follows.
By Theorem~\ref{thm:SunWang} (3) of Sun and Wang,
every $W(E_8)$-invariant Jacobi form $\varphi$
is written uniquely in the form \eqref{eq:SunWangform}.
By Proposition~\ref{prop:ABincd},
$A_i,B_j$ are polynomials of $c_k,d_l$.
The quotient $P_{16,5}/E_4$ is also written as a polynomial of
$c_k,d_l$ as
\begin{align}
\frac{P_{16,5}}{E_4}
&=-\frac{27d_0}{128}(-7110c_0c_1^5d_0^2+140\Delta c_0^2c_1^3c_2
 -21\Delta c_0c_1^5
 -1344\Delta c_0^2c_1d_2^2\nn\\
&\hspace{2em}
-9648\Delta c_0c_1^2d_0d_3+576\Delta c_0c_1c_2d_0d_2
 -6864\Delta c_1^3d_0d_2+648\Delta c_1^2c_3d_0^2\nn\\
&\hspace{2em}
 +2160\Delta c_1c_2^2d_0^2+448\Delta^2c_0c_2c_3-168\Delta^2c_1^2c_3
 -224\Delta^2c_1c_2^2+9216\Delta^2d_0d_5\nn\\
&\hspace{2em}
 -8448\Delta^2d_2d_3).
\end{align}
Note that $\Delta=c_0^3-27d_0^2$.
Thus $\Delta^{N_t}\varphi$ 
is written as a polynomial of $c_i,d_j$,
where $N_t\in\bbZ_{\ge 0}$.
Using the algebraic independence of
the constant terms (with respect to $q$)
of $c_2,c_3,c_4$ and $d_2,d_3,d_4,d_5,d_6$,
which follows from that of the constant terms of
$a_2,a_3,a_4$ and $b_2,b_3,b_4,b_5$, $b_6$
\cite[Proposition 2.5]{Sakai:2022taq},
one can inductively show that
$\Delta^n\varphi\ (n=N_t,N_t-1,\ldots,1,0)$ are polynomials
of $c_i,d_j$.
\end{proof}

We are now ready to prove
one of the main theorems of this paper.
\begin{thm}\label{thm:intersec}
The ring of $W(E_8)$-invariant weak Jacobi forms is
the intersection of two polynomial rings $\cR$ and $\tcR$
\begin{align}
J^{E_8}_{*,*}=\cR\cap\tcR.
\end{align}
In other words, the following two conditions
for a function $\varphi$ are equivalent:
\renewcommand{\theenumi}{\roman{enumi}}
\renewcommand{\labelenumi}{$\mathrm{(\theenumi})$}
~\\[-3.5ex]
\begin{enumerate}
\item $\varphi$ is a $W(E_8)$-invariant weak Jacobi form.

\vspace{-1ex}
\item $\varphi$ is
a polynomial of $a_i,b_j$ and also a polynomial of $c_i,d_j$.
\end{enumerate}
\end{thm}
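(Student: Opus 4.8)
The plan is to prove the two inclusions separately. The inclusion $J^{E_8}_{*,*}\subseteq\cR\cap\tcR$ is immediate: Theorem~\ref{thm:JE8inR} gives $J^{E_8}_{*,*}\subseteq\cR$ and Theorem~\ref{thm:JE8intR} gives $J^{E_8}_{*,*}\subseteq\tcR$. All the content lies in the reverse inclusion, so fix $\varphi\in\cR\cap\tcR$. Since the generators $a_i,b_j$ of $\cR$ are homogeneous with respect to the bigrading (Proposition~\ref{prop:abprop}(1)) and algebraically independent, $\cR$ is a bigraded ring, and likewise $\tcR$ by Propositions~\ref{prop:cdprop}(1) and~\ref{prop:cdindep}. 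A bihomogeneous element of either ring is a meromorphic $W(E_8)$-invariant Jacobi form of the corresponding weight and index, and a meromorphic Jacobi form has at most one decomposition into bihomogeneous components; hence the decomposition of $\varphi$ coming from $\cR$ and the one coming from $\tcR$ agree term by term, and each homogeneous piece $\varphi_{k,m}$ again lies in $\cR\cap\tcR$. It therefore suffices to show $\varphi_{k,m}\in J^{E_8}_{k,m}$ for each such piece. Note in passing that $m\ge 0$, since every monomial in the (nonnegative-index) generators of $\cR$ has nonnegative index.

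Next I would verify the axioms (i)--(iv) of Definition~\ref{def:Jacobi} for $\varphi_{k,m}$. Writing it as a polynomial in $a_i,b_j$, it inherits from those functions the Weyl invariance, the quasi-periodicity, and --- being bihomogeneous of bidegree $(k,m)$ --- the modular transformation law of weight $k$ and index $m$, together with a Fourier expansion containing only nonnegative powers of $q$ (Proposition~\ref{prop:abprop}(3)). The only property not yet established is holomorphicity on all of $\bbH\times\bbC^8$. Here Proposition~\ref{prop:abprop}(2) enters: multiplying $\varphi_{k,m}$ by a sufficiently high power of $E_4$ turns each generator occurring in it into a genuine holomorphic Jacobi form, so $E_4^N\varphi_{k,m}\in J^{E_8}_{k+4N,m}$ for some $N\ge 0$; in particular $\varphi_{k,m}$ is holomorphic away from the zero locus of $E_4$. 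Applying the same reasoning to the expression of $\varphi_{k,m}$ in terms of $c_i,d_j$ and invoking Proposition~\ref{prop:cdprop}(2), one obtains $E_6^M\varphi_{k,m}\in J^{E_8}_{k+6M,m}$ and hence that $\varphi_{k,m}$ is holomorphic away from the zero locus of $E_6$.

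It remains to combine these two facts. Since $\Delta=\tfrac{1}{1728}(E_4^3-E_6^2)=\eta^{24}$ has no zero on $\bbH$, the zero loci of $E_4$ and of $E_6$ in $\bbH$ are disjoint, so every point of $\bbH\times\bbC^8$ lies in the holomorphicity domain of at least one of the two representations; hence $\varphi_{k,m}$ is holomorphic on $\bbH\times\bbC^8$. Together with the properties already checked, this shows $\varphi_{k,m}\in J^{E_8}_{k,m}$, and summing over $(k,m)$ gives $\varphi\in J^{E_8}_{*,*}$. The crux of the argument --- and the reason the complementary ``basis'' $\{c_i,d_j\}$ obtained from the translation~\eqref{eq:u1trans} is introduced --- is precisely this last step: a single polynomial representation certifies holomorphicity only away from the zeros of $E_4$ (respectively $E_6$), and it is the disjointness of these two loci that forces an element of $\cR\cap\tcR$ to be genuinely holomorphic, hence an honest Jacobi form. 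The only other point demanding a little care is the compatibility of the bigraded decompositions in $\cR$ and in $\tcR$, which I would settle via the uniqueness of the homogeneous decomposition of a meromorphic Jacobi form.
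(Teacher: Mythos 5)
Your proof is correct and follows essentially the same route as the paper: the forward inclusion from Theorems~\ref{thm:JE8inR} and \ref{thm:JE8intR}, and for the converse the observation that the $a_i,b_j$ representation certifies holomorphicity away from the zeros of $E_4$ while the $c_i,d_j$ representation certifies it away from the zeros of $E_6$, these loci being disjoint since $\Delta$ is nowhere vanishing on $\bbH$. Your extra care about matching the bihomogeneous decompositions in $\cR$ and $\tcR$ is a point the paper leaves implicit, but it is not a different argument.
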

\begin{proof}
It is evident
from Theorems~\ref{thm:JE8inR} and \ref{thm:JE8intR}
that $\varphi\in J^{E_8}_{*,*}\Rightarrow \varphi\in\cR\cap\tcR$.
Conversely, suppose that $\varphi\in \cR\cap\tcR$.
Then $\varphi$ possesses all the properties
of $W(E_8)$-invariant Jacobi forms,
except that it could have poles
at the zero points of $E_4$ and $E_6$.
By Proposition~\ref{prop:abprop} (2),
$\varphi$ is holomorphic at the zero points of $E_6$.
By Proposition~\ref{prop:cdprop} (2),
$\varphi$ is holomorphic at the zero points of $E_4$.
\end{proof}
%

\subsection{New algorithm}\label{sec:algorithm}

As an application of Theorem~\ref{thm:intersec}
we formulate an
algorithm for constructing all $W(E_8)$-invariant weak Jacobi forms
of given weight $k$ and index $m$. This algorithm is 
more efficient than the previous one
\cite[Algorithm 4.1]{Sakai:2022taq},
due to the simple relation \eqref{eq:abincd}
between $a_i,b_j$ and $c_k,d_l$.
\begin{algo}
\renewcommand{\theenumi}{\arabic{enumi}}
\renewcommand{\labelenumi}{(\theenumi)}
~\\[-3.5ex]
\begin{enumerate}
\item Take the most general polynomial in $a_i,b_j$
of weight $k$ and index $m$ as our ansatz.
More specifically, the ansatz is constructed
as the most general linear combination of all monomials of $a_i,b_j$
appearing in the coefficient of $x^k y^m$ in the generating series
\begin{align}
\frac{1}{\prod_{i=0,2,3,4}(1-x^{4-6i}y^ia_i)
         \prod_{j=0}^6(1-x^{6-6j}y^jb_j)}.
\end{align}

\item Substitute \eqref{eq:abincd} into the ansatz
and solve the linear equations among undetermined coefficients
so that all negative powers of $c_0$ vanish.

\item Substitute the general solution back into the original ansatz.
This gives the most general linear combination of
$W(E_8)$-invariant weak Jacobi forms of weight $k$ and index $m$.
\end{enumerate}
\end{algo}

To test the algorithm,
let us consider the graded subring of $J^{E_8}_{*,*}$ given by
\begin{align}
\Jlb
 :=\bigoplus_{m=0}^\infty J^{E_8}_{-4m,m},
\label{eq:subring}
\end{align}
which was studied in the previous work \cite{Sakai:2022taq}.
By using the new algorithm,
generators of index $m$ of $\Jlb$ for $m\le 32$
have been determined.
The results correctly reproduce the previous results for $m\le 28$.
\begin{prop}
Let $\dlb_m$ denote the number of generators of index $m$
of the graded ring $\Jlb$.
For $0\le m\le 32$, the number $\dlb_m$ is given
as in Table~$\ref{table:dlbnew}$.
\end{prop}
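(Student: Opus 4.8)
The statement is purely computational once the new algorithm of Section~\ref{sec:algorithm} is in hand, so the plan is to \emph{run the algorithm} for each index $m$ in the range $0\le m\le 32$ and record the number of new generators needed at each step. First I would, for a fixed index $m$, enumerate a $\bbC$-basis of $J^{E_8}_{-4m,m}$: by Theorem~\ref{thm:intersec} this space is $(\cR\cap\tcR)_{-4m,m}$, which the three-step algorithm computes as the subspace of the weight-$(-4m)$, index-$m$ part of $\cR$ (spanned by the monomials in $a_0,a_2,a_3,a_4,b_0,\ldots,b_6$ of the correct bidegree, read off from the generating series in step~(1)) that remains a polynomial in $c_i,d_j$ after the substitution \eqref{eq:abincd}. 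Solving the resulting linear system over $\bbC$ yields $\dim J^{E_8}_{-4m,m}$ and an explicit basis.

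Next I would compute, for each $m$, the subspace of $J^{E_8}_{-4m,m}$ that is \emph{decomposable}, i.e.\ spanned by products of the already-found generators of lower index in $\Jlb$: concretely, take all products $g_{m_1}\cdots g_{m_s}$ with $m_1+\cdots+m_s=m$ and each $m_i<m$ (automatically landing in weight $-4m$ because the grading on $\Jlb$ forces weight $=-4\cdot(\text{index})$), express them in the monomial basis from step~(1), and compute the rank of the span. Then $\dlb_m$ is the codimension of this decomposable subspace inside $J^{E_8}_{-4m,m}$; choosing coset representatives gives the generators of index $m$. Iterating from $m=0$ (where $\dlb_0=1$, the constant) up to $m=32$ produces Table~\ref{table:dlbnew}. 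The consistency check is that the numbers so obtained agree with those computed by the previous algorithm \cite[Algorithm 4.1]{Sakai:2022taq} for $m\le 28$, which is the independent verification that both the implementation and Theorem~\ref{thm:intersec} are correct.

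The main obstacle is not mathematical but computational: the dimension of the monomial space in step~(1) grows rapidly with $m$ (the number of monomials of index $m$ in eleven generators of indices $0,1,2,2,2,\ldots$ grows polynomially of fairly high degree), and the substitution \eqref{eq:abincd} introduces denominators that are powers of $c_0$, so clearing them and extracting the linear conditions requires working with large sparse systems of rational linear equations; likewise the rank computation for the decomposable subspace at $m=32$ involves a substantial number of products. The efficiency gain over \cite[Algorithm 4.1]{Sakai:2022taq} comes precisely from the fact that \eqref{eq:abincd} is an explicit triangular change of variables with the simple shift $-c_1/(4c_0)$, so the substitution and the "kill all negative powers of $c_0$" step are much cheaper than the corresponding step in the old algorithm. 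Once these linear-algebra computations are carried out (over $\bbQ$, to avoid floating-point issues), the values $\dlb_m$ for $0\le m\le32$ are read off directly, establishing the proposition.
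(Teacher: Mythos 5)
Your proposal matches the paper's approach exactly: the proposition is established purely computationally by running the new algorithm at each index $m\le 32$, extracting the dimension of $J^{E_8}_{-4m,m}$ and the codimension of the subspace spanned by products of lower-index generators, with agreement against the earlier results for $m\le 28$ serving as the consistency check. The only slip is your claim that $\dlb_0=1$ (counting the constant): the table records $\dlb_0=0$, since the unit of the graded algebra is not counted among the generators.
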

\begin{table}[t]
\begin{align*}
&\begin{array}
{|@{\,}c@{\,}||
@{\,}c@{\,}|@{\,}c@{\,}|@{\,}c@{\,}|@{\,}c@{\,}|@{\,}c@{\,}|
@{\,}c@{\,}|@{\,}c@{\,}|@{\,}c@{\,}|@{\,}c@{\,}|@{\,}c@{\,}|
@{\,}c@{\,}|@{\,}c@{\,}|@{\,}c@{\,}|@{\,}c@{\,}|@{\,}c@{\,}|
@{\,}c@{\,}|@{\,}c@{\,}|}\hline
m&\bx{0}&\bx{1}&\bx{2}&\bx{3}&\bx{4}&\bx{5}
&\bx{6}&\bx{7}&\bx{8}&\bx{9}&\bx{10}
&\bx{11}&\bx{12}&\bx{13}&\bx{14}&\bx{15}&\bx{16}\\ \hline
\dlb_m&0&0&0&0&1&0&2&0&1&1&2&0&3&1&3&3&3\\ \hline
\end{array}\\
&\begin{array}
{|@{\,}c@{\,}||
@{\,}c@{\,}|@{\,}c@{\,}|@{\,}c@{\,}|@{\,}c@{\,}|@{\,}c@{\,}|
@{\,}c@{\,}|@{\,}c@{\,}|@{\,}c@{\,}|@{\,}c@{\,}|@{\,}c@{\,}|
@{\,}c@{\,}|@{\,}c@{\,}|@{\,}c@{\,}|@{\,}c@{\,}|@{\,}c@{\,}|
@{\,}c@{\,}|@{\,}c@{\,}|}\hline
m
&\bx{17}&\bx{18}&\bx{19}&\bx{20}
&\bx{21}&\bx{22}&\bx{23}&\bx{24}&\bx{25}
&\bx{26}&\bx{27}&\bx{28}&\bx{29}&\bx{30}
&\bx{31}&\bx{32}&\cdots\\ \hline
\dlb_m
&3&4&3&3&4&2&3&2&3&1&1&1&2&1&1&1&\cdots\\ \hline
\end{array}
\end{align*}
\vspace{-3ex}
\caption{Number of generators
of the subring $\Jlb$ (for index $m\le 32$)}\label{table:dlbnew}
\end{table}
%

\section{Invariants of binary forms and isomorphism}

\subsection{Invariants of binary forms}

In this subsection we recall some definitions and useful results
from classical invariant theory
(see e.g.~\cite{Hilbert:1993,Glenn:1915}).

Let $f$ be a binary form of degree $n$.
We write it as
\begin{align}
f=f(u,v)=\sum_{i=0}^n\alpha_i u^{n-i}v^i
\end{align}
with $\alpha_i\in\bbC$, $(u,v)\in\bbC^2$.
Similarly, we consider several binary forms
$f_1,\ldots,f_\numf$ of degrees $n_1,\ldots,n_\numf$
and write them as
\begin{align}
f_k=f_k(u,v)=\sum_{i=0}^{n_k}\alpha_{k,i} u^{n_k-i}v^i.
\end{align}
We define the action of a matrix $T\in\grp{SL}_2(\bbC)$
on the variables $u,v$ by
\begin{align}
(u',v')=(t_{11}u+t_{12}v,t_{21}u+t_{22}v),\qquad
T=
\begin{pmatrix}
t_{11}&t_{12}\\
t_{21}&t_{22}
\end{pmatrix}.
\end{align}
We let the symbol $({}')$ denote the action of $T$ hereafter:
$(u',v')=T(u,v),\,\alpha'_i=T\alpha_i$, etc.
The action of $T$ on a binary form $f$
and its coefficients $\alpha_i$ is then defined by
\begin{align}
f'(u,v)=\sum_{i=0}^n\alpha'_i u^{n-i}v^i:=f(u',v').
\end{align}
\begin{dfn}[Invariants of binary forms]\label{def:invbin}
An invariant of $f$ is a homogeneous polynomial $\Phi$
in the coefficients $\alpha_i$ of $f$
that satisfies
\begin{align}
\Phi(\alpha'_i)=\Phi(\alpha_i)
\end{align}
under the action of all $T\in\grp{SL}_2(\bbC)$.
Similarly, a (joint) invariant of $f_1,\ldots,f_\numf$
is a homogeneous polynomial $\Phi$ in the coefficients $\alpha_{k,i}$
of $f_k$ that satisfies
\begin{align}
\Phi(\alpha'_{k,i})=\Phi(\alpha_{k,i})
\end{align}
under the action of all $T\in\grp{SL}_2(\bbC)$.
The degree of $\Phi$ refers to the standard degree of homogeneity
in $\alpha_i$ or $\alpha_{k,i}$.
\end{dfn}
We often omit `joint' when it does not cause any confusion.
In what follows definitions of covariants and semiinvariants 
are presented solely for several binary forms.
\begin{dfn}[Covariants of binary forms]\label{def:covbin}
A joint covariant of $f_1,\ldots,f_\numf$
of degree $d$ and order $\omega$
is a polynomial $\Psi(\alpha_{k,i};u,v)$
that satisfies the following conditions:
\renewcommand{\theenumi}{\roman{enumi}}
\renewcommand{\labelenumi}{(\theenumi)}
~\\[-3ex]
\begin{enumerate}
\item $\Psi$ is homogeneous of degree $d$
in the coefficients $\alpha_{k,i}$.

\item $\Psi$ is homogeneous of degree $\omega$
in the variables $u,v$.

\item $\Psi$ satisfies
\begin{align}
\Psi(\alpha'_{k,i};u,v)=\Psi(\alpha_{k,i};u',v')
\end{align}
under the action of all $T\in\grp{SL}_2(\bbC)$.
\end{enumerate}
\end{dfn}
\begin{dfn}[Semiinvariants of binary forms]\label{def:semibin}
A joint semiinvariant of $f_1,\ldots,f_\numf$
of degree $d$ and order $\omega$
is a polynomial $\Phi(\alpha_{k,i})$
that satisfies the following conditions:
\renewcommand{\theenumi}{\roman{enumi}}
\renewcommand{\labelenumi}{(\theenumi)}
~\\[-7ex]
\begin{enumerate}
\item $\Phi$ is homogeneous of degree $d$
in the coefficients $\alpha_{k,i}$.

\item $\Phi$ satisfies
\begin{align}
\Phi(\alpha'_{k,i})=\Phi(\alpha_{k,i})
\end{align}
under the action of all
$T=\begin{pmatrix}1&\kappa\\ 0&1\end{pmatrix}\in\grp{SL}_2(\bbC)$.

\item $\Phi$ transforms as
\begin{align}
\Phi(\alpha'_{k,i})=\lambda^\omega\Phi(\alpha_{k,i})
\end{align}
under the action of all
$T=\begin{pmatrix}\lambda&0\\ 0&\lambda^{-1}\end{pmatrix}
 \in\grp{SL}_2(\bbC)$.
\end{enumerate}
\end{dfn}

Let $V_n$ denote the $\bbC$-vector space of binary forms of degree $n$.
Let
\begin{align}
\bbC[V_{n_1}\oplus\cdots\oplus V_{n_\numf}]^{\grp{SL}_2(\bbC)}
\end{align}
denote the ring of joint invariants
of $f_1,\ldots,f_\numf$
and
\begin{align}
\bbC[V_{n_1}\oplus\cdots\oplus V_{n_\numf}\oplus
     \bbC^2]^{\grp{SL}_2(\bbC)}
\end{align}
the ring of joint covariants of $f_1,\ldots,f_\numf$.
Similarly, the ring of joint semiinvariants of $f_1,\ldots,f_\numf$
is denoted by
\begin{align}
\bbC[V_{n_1}\oplus\cdots\oplus V_{n_\numf}]^{\grp{U}_2(\bbC)},
\end{align}
where $\grp{U}_2(\bbC)$ 
is the group of upper triangular unipotent matrices
\begin{align}
\grp{U}_2(\bbC)
:=\left\{\begin{pmatrix}1&\kappa\\ 0&1\end{pmatrix}
 \,\middle|\,\kappa\in\bbC\right\}.
\end{align}

Let $\Psi$ be a joint covariant of order $\omega$.
It takes the form
\begin{align}
\Psi(\alpha_{k,i};u,v)
=\Psi_0(\alpha_{k,i}) u^\omega
 +\Psi_1(\alpha_{k,i}) u^{\omega-1}v
 +\cdots +\Psi_\omega(\alpha_{k,i}) v^\omega.
\end{align}
The leading coefficient
\begin{align}
\Psi_0(\alpha_{k,i})=\Psi(\alpha_{k,i};1,0)
\end{align}
is called the source of $\Psi$.
The following fact is well known.
\begin{thm}[{Roberts isomorphism
            \cite{Roberts:1861}}]\label{thm:Roberts}
The source of a covariant is a semiinvariant.
Moreover, the mapping
\begin{align}
\begin{aligned}
\bbC[V_{n_1}\oplus\cdots\oplus V_{n_\numf}
     \oplus\bbC^2]^{\grp{SL}_2(\bbC)}
&\ \to\ 
\bbC[V_{n_1}\oplus\cdots\oplus V_{n_\numf}]^{\grp{U}_2(\bbC)};\\
\Psi(\alpha_{k,i};u,v)&\ \mapsto\ 
\Psi_0(\alpha_{k,i})
\end{aligned}
\end{align}
is an isomorphism.
\end{thm}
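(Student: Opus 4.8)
The plan is to prove the two assertions of the theorem in turn: first that the source map sends covariants to semiinvariants, then that it is a bijection. For the first part, I would unwind the definitions. Let $\Psi(\alpha_{k,i};u,v)$ be a joint covariant of degree $d$ and order $\omega$, and let $\Phi(\alpha_{k,i}) = \Psi_0(\alpha_{k,i}) = \Psi(\alpha_{k,i};1,0)$ be its source. Homogeneity of degree $d$ in the $\alpha_{k,i}$ is inherited directly from condition (i) of Definition~\ref{def:covbin}. To check the two invariance properties in Definition~\ref{def:semibin}, I would specialize the covariance identity $\Psi(\alpha'_{k,i};u,v)=\Psi(\alpha_{k,i};u',v')$ to the relevant subgroups. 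For $T=\begin{pmatrix}1&\kappa\\ 0&1\end{pmatrix}$ we have $(u',v')=(u+\kappa v,\,v)$, so setting $(u,v)=(1,0)$ gives $(u',v')=(1,0)$ and hence $\Phi(\alpha'_{k,i})=\Psi(\alpha'_{k,i};1,0)=\Psi(\alpha_{k,i};1,0)=\Phi(\alpha_{k,i})$, which is condition (ii). For $T=\begin{pmatrix}\lambda&0\\ 0&\lambda^{-1}\end{pmatrix}$ we have $(u',v')=(\lambda u,\,\lambda^{-1}v)$, so $(u,v)=(1,0)$ gives $(u',v')=(\lambda,0)$, and by homogeneity of degree $\omega$ in $u,v$ (condition (ii) of Definition~\ref{def:covbin}) we get $\Psi(\alpha_{k,i};\lambda,0)=\lambda^\omega\Psi(\alpha_{k,i};1,0)=\lambda^\omega\Phi(\alpha_{k,i})$, so $\Phi(\alpha'_{k,i})=\lambda^\omega\Phi(\alpha_{k,i})$, which is condition (iii). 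Thus $\Phi$ is a semiinvariant of degree $d$ and order $\omega$, and the source map is well defined into $\bbC[V_{n_1}\oplus\cdots\oplus V_{n_\numf}]^{\grp{U}_2(\bbC)}$; it is plainly $\bbC$-linear and multiplicative, hence a ring homomorphism.

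For injectivity, I would argue that a covariant is determined by its source. The key point is that $\grp{SL}_2(\bbC)$ acts transitively on $\bbC^2\setminus\{0\}$ (or, more to the point, the Borel subgroup moves $(1,0)$ to any $(u,v)$ with $u\neq 0$), so the covariance identity recovers every coefficient $\Psi_j$ from $\Psi_0$: given $(u,v)$ with $u\neq 0$, choose $T\in\grp{SL}_2(\bbC)$ with $T(1,0)=(u,v)$, and then $\Psi(\alpha_{k,i};u,v)=\Psi((T^{-1})\alpha_{k,i};1,0)=\Phi((T^{-1})\alpha_{k,i})$, which exhibits $\Psi$ on the Zariski-dense set $\{u\neq 0\}$ purely in terms of $\Phi$. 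Since $\Psi$ is a polynomial, it is determined on all of $\bbC^2$ by its values there. Hence $\Phi=0$ forces $\Psi=0$, giving injectivity.

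Surjectivity is the substantive step and the one I expect to be the main obstacle. Given a semiinvariant $\Phi$ of degree $d$ and order $\omega$, I must produce a covariant $\Psi$ with source $\Phi$. The natural candidate is to \emph{define} $\Psi$ by the formula forced in the previous paragraph: for $u\neq 0$ set $\Psi(\alpha_{k,i};u,v):=\Phi((T^{-1})\alpha_{k,i})$ where $T\in\grp{SL}_2(\bbC)$ satisfies $T(1,0)=(u,v)$, or equivalently use an explicit upper-triangular representative $T=\begin{pmatrix}u&*\\ v&*\end{pmatrix}$ and factor $T$ through the torus and the unipotent subgroup. One must check: (a) the definition is independent of the choice of $T$ — this uses precisely that $\Phi$ is invariant under $\grp{U}_2(\bbC)$ and scales by $\lambda^\omega$ under the torus, which exactly accounts for the stabilizer of the line through $(1,0)$; (b) the resulting function of $(u,v)$, after clearing the powers of $u$ that appear, extends to a polynomial on all of $\bbC^2$ that is homogeneous of degree $\omega$ in $u,v$ and of degree $d$ in the $\alpha_{k,i}$ — here the bound on the $u$-denominators comes from the degree of $\Phi$ together with the weight/order bookkeeping (for a semiinvariant of a form of degree $n$, $\omega \le nd$ and the relevant powers of $u$ never exceed $\omega$); and (c) the extended $\Psi$ satisfies the full $\grp{SL}_2(\bbC)$-covariance, which follows from the construction together with the Bruhat decomposition, the delicate case being the Weyl element $\begin{pmatrix}0&-1\\ 1&0\end{pmatrix}$ swapping $u\leftrightarrow -v$, where one checks the two polynomial extensions agree on the dense open set where both $u\neq 0$ and $v\neq 0$. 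Once (a)--(c) are in place, $\Psi$ is a covariant with $\Psi(\alpha_{k,i};1,0)=\Phi(\alpha_{k,i})$, so the source map is surjective, and being a bijective ring homomorphism it is an isomorphism. I would either carry out (b) via an explicit operator — the classical fact that the covariant is reconstructed by iterated application of the ``roofing'' (Cayley $\Omega$-process style) operator to the semiinvariant — or simply cite the classical literature (e.g.~\cite{Glenn:1915}) for the polynomiality, since this is the only genuinely computational point.
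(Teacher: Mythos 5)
The paper itself offers no proof of this theorem: it is quoted as a classical fact with a citation to Roberts, and only the explicit inverse map \eqref{eq:Robinv} is recorded, also without proof. So there is nothing in the paper to compare against; judged on its own, your outline is the standard argument, and your surjectivity construction via an explicit representative $T$ with $T(1,0)=(u,v)$, factored through the torus and a unipotent subgroup, is exactly what \eqref{eq:Robinv} encodes. The first part (source of a covariant is a semiinvariant) is complete and correct. In the injectivity step there is a small bookkeeping slip: with the paper's convention $\Psi(\alpha'_{k,i};u,v)=\Psi(\alpha_{k,i};u',v')$, specializing the second slot to $(1,0)$ gives $\Psi(\alpha_{k,i};T(1,0))=\Phi(T\alpha_{k,i})$, so on $\{u\ne 0\}$ one has $\Psi(\alpha_{k,i};u,v)=\Phi(T\alpha_{k,i})$ for any $T$ with $T(1,0)=(u,v)$, not $\Phi(T^{-1}\alpha_{k,i})$; this does not affect the structure of the argument.

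The one substantive issue is the polynomiality claim in your step (b), which is indeed the only nontrivial point, and the justification you sketch for it is not correct as stated. The bound ``$\omega\le nd$ and the relevant powers of $u$ never exceed $\omega$'' does not follow from degree counting: substituting $\halpha_{k,i}(u,v)=\sum_{j\ge i}\alpha_{k,j}\binom{j}{i}(v/u)^{j-i}$ into a degree-$d$ monomial $\prod_s\alpha_{k_s,i_s}$ produces powers of $v/u$ up to $\sum_s(n_{k_s}-i_s)=\omega+\sum_s i_s$, which exceeds $\omega$ as soon as some $i_s>0$. What makes $u^\omega\Phi(\halpha_{k,i}(u,v))$ a polynomial is precisely the semiinvariance of $\Phi$, not homogeneity: writing $\Phi(\halpha_{k,i}(u,v))=\sum_{r\ge 0}\tfrac{1}{r!}(v/u)^r D^r\Phi$ for the lowering operator $D$ generating the opposite unipotent action, one needs $D^r\Phi=0$ for $r>\omega$, which holds because a semiinvariant of order $\omega$ is a highest-weight vector of weight $\omega$ for the $\mathfrak{sl}_2$-action on a locally finite representation (or one verifies the vanishing by a direct computation). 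You do flag this as the place to invoke the classical literature, so the gap is acknowledged rather than hidden, but the heuristic you offer in its place should be discarded in favor of this argument.
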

The inverse mapping is given as follows.
For any semiinvariant $\Psi_0$ of order $\omega$,
the corresponding covariant $\Psi$ is recovered by
\begin{align}\label{eq:Robinv}
\Psi(\alpha_{k,i};u,v)
=u^\omega\Psi_0(\halpha_{k,i}(u,v))
\end{align}
with
\begin{align}
\halpha_{k,i}(u,v)
=\sum_{j=i}^{n_k}\alpha_{k,j}
 \begin{pmatrix}j\\[1ex] i\end{pmatrix}
 \left(\frac{v}{u}\right)^{j-i}.
\end{align}
It is clear that
the degree and order of covariants agree with
those of semiinvariants, respectively.

A covariant of order $0$ is an invariant, and vice versa.
This leads to the following theorem.
It is a trivial corollary of Theorem~\ref{thm:Roberts},
but is worth noting:
\begin{thm}
A semiinvariant of order $0$ is an invariant, and vice versa.
\end{thm}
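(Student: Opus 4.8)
The plan is to deduce this directly from the Roberts isomorphism (Theorem~\ref{thm:Roberts}) together with the elementary observation, already recorded just above, that a covariant of order $0$ is the same thing as an invariant. The statement has two halves, and each reduces to unwinding the definitions once the Roberts correspondence is in hand.

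First I would dispose of the easy implication: an invariant is a semiinvariant of order $0$. Let $\Phi(\alpha_{k,i})$ be a joint invariant. By Definition~\ref{def:invbin} it is homogeneous in the coefficients and satisfies $\Phi(\alpha'_{k,i})=\Phi(\alpha_{k,i})$ for every $T\in\grp{SL}_2(\bbC)$. Restricting this identity to $T=\begin{pmatrix}1&\kappa\\0&1\end{pmatrix}$ yields condition (ii) of Definition~\ref{def:semibin}, while restricting it to $T=\begin{pmatrix}\lambda&0\\0&\lambda^{-1}\end{pmatrix}$ yields $\Phi(\alpha'_{k,i})=\Phi(\alpha_{k,i})=\lambda^{0}\Phi(\alpha_{k,i})$, which is condition (iii) with $\omega=0$. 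Since homogeneity (condition (i)) is already part of being an invariant, $\Phi$ is a semiinvariant of order $0$.

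For the converse, let $\Phi$ be a joint semiinvariant of order $0$. Because the Roberts map of Theorem~\ref{thm:Roberts} is an isomorphism, it is in particular surjective, so there is a joint covariant $\Psi(\alpha_{k,i};u,v)$ whose source equals $\Phi$, and $\Psi$ has the same order as $\Phi$, namely $0$. By condition (ii) of Definition~\ref{def:covbin} such a $\Psi$ is homogeneous of degree $0$ in $u,v$, hence independent of $u,v$, so $\Psi(\alpha_{k,i};u,v)=\Psi_0(\alpha_{k,i})=\Phi(\alpha_{k,i})$. As noted above, a covariant of order $0$ is an invariant; therefore $\Phi$ is an invariant.

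There is no real obstacle here, since the content lies entirely in Theorem~\ref{thm:Roberts} and the definitions. The only point worth flagging is that one must invoke surjectivity of the Roberts map—not merely the fact that the source of a covariant is a semiinvariant—in order to realize an arbitrary order-$0$ semiinvariant as the source of an order-$0$ covariant; once that is available, the homogeneity condition forces the covariant to be constant in $u,v$ and the conclusion is immediate.
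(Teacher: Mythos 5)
Your proposal is correct and follows essentially the same route as the paper, which presents this theorem as a trivial corollary of the Roberts isomorphism (Theorem~\ref{thm:Roberts}) combined with the observation that a covariant of order $0$ is an invariant. Your unwinding of the two directions—checking the torus and unipotent actions directly for the easy half, and using surjectivity plus order-preservation of the Roberts map for the converse—is exactly the intended argument, just written out in full.
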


Before closing this subsection, let us take
some examples of semiinvariants.
The fact that these are semiinvariants
is well known in classical invariant theory
(see e.g.~\cite[Section~8.1.7]{Glenn:1915}).
Nevertheless, for our later purposes it is useful to present
their explicit forms and give a concrete proof.

Let us define $\gamma_{k,i}^{(m)}$ by the relations
\begin{align}
\sum_{i=0}^{n_k}\gamma_{k,i}^{(m)}u^{n_k-i}
=f_k\left(u-\frac{\alpha_{m,1}}{n_m\alpha_{m,0}},1\right).
\label{eq:alkidef}
\end{align}
Explicitly, $\gamma_{k,i}^{(m)}$ are given by
\begin{align}
\gamma_{k,i}^{(m)}
=\sum_{j=0}^i\alpha_{k,j}
 \begin{pmatrix} n_i-j\\[1ex] n_i-i\end{pmatrix}
 \left(-\frac{\alpha_{m,1}}{n_m\alpha_{m,0}}\right)^{i-j}.
\end{align}
Note that $\gamma_{k,1}^{(k)}=0$ by construction.
\begin{prop}\label{prop:alsemi}
The polynomials
$\alpha_{k,0}^{i-1}\gamma_{k,i}^{(k)}$
are semiinvariants of $f_k$.
The polynomials
$\alpha_{m,0}^i\gamma_{k,i}^{(m)}\ (k\ne m)$
are joint semiinvariants of $f_k$ and $f_m$.
\end{prop}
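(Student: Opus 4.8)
The plan is to verify directly that the polynomials in question satisfy the defining conditions of (joint) semiinvariants in Definition~\ref{def:semibin}, working out how the auxiliary quantities $\gamma_{k,i}^{(m)}$ transform under the two one-parameter subgroups that generate the relevant groups. The key observation driving everything is that the substitution $u\mapsto u-\frac{\alpha_{m,1}}{n_m\alpha_{m,0}}$ appearing in \eqref{eq:alkidef} is engineered precisely to cancel the linear term of $f_m$; so the $\gamma_{k,i}^{(m)}$ are nothing but the coefficients of $f_k$ after a translation chosen to normalize $f_m$. Translations of $u$ are exactly the action of the unipotent subgroup $\grp{U}_2(\bbC)$, so the transformation behaviour of $\gamma_{k,i}^{(m)}$ under that subgroup should be essentially trivial once bookkeeping is done.

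First I would treat the unipotent action $T=\begin{pmatrix}1&\kappa\\ 0&1\end{pmatrix}$, under which $(u',v')=(u+\kappa v,\,v)$. I would compute that under $T$ the translation parameter shifts as $\frac{\alpha'_{m,1}}{n_m\alpha'_{m,0}}=\frac{\alpha_{m,1}}{n_m\alpha_{m,0}}+\kappa$, since $\alpha_{m,0}$ is invariant and $\alpha_{m,1}$ picks up exactly $n_m\kappa\alpha_{m,0}$ from the leading term of $f_m$ under $u\mapsto u+\kappa v$ (evaluated at $v=1$ in the notation of \eqref{eq:alkidef}). Consequently the combined substitution $u-\frac{\alpha'_{m,1}}{n_m\alpha'_{m,0}}$ composed with the $T$-action on $f_k$ reproduces the original $u-\frac{\alpha_{m,1}}{n_m\alpha_{m,0}}$, so $\sum_i \gamma_{k,i}^{\prime(m)}u^{n_k-i}=\sum_i \gamma_{k,i}^{(m)}u^{n_k-i}$, i.e.\ each $\gamma_{k,i}^{(m)}$ is individually invariant under $\grp{U}_2(\bbC)$. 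Since $\alpha_{m,0}$ is also $\grp{U}_2(\bbC)$-invariant, so are the products $\alpha_{k,0}^{i-1}\gamma_{k,i}^{(k)}$ and $\alpha_{m,0}^i\gamma_{k,i}^{(m)}$; this establishes condition (ii) of Definition~\ref{def:semibin}.

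Next I would treat the torus action $T=\begin{pmatrix}\lambda&0\\ 0&\lambda^{-1}\end{pmatrix}$, under which $(u',v')=(\lambda u,\lambda^{-1}v)$, hence $\alpha_{k,i}\mapsto\lambda^{n_k-2i}\alpha_{k,i}$. The translation parameter transforms as $\frac{\alpha'_{m,1}}{n_m\alpha'_{m,0}}=\lambda^{-2}\frac{\alpha_{m,1}}{n_m\alpha_{m,0}}$, and chasing this through \eqref{eq:alkidef} gives $\gamma_{k,i}^{\prime(m)}=\lambda^{n_k-2i}\gamma_{k,i}^{(m)}$, i.e.\ the $\gamma$'s carry the same torus weight as a degree-$n_k$ coefficient with index $i$ would. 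Then $\alpha_{m,0}^i\gamma_{k,i}^{(m)}$ picks up weight $\lambda^{i\cdot n_m}\lambda^{n_k-2i}=\lambda^{n_k+i(n_m-2)}$, and $\alpha_{k,0}^{i-1}\gamma_{k,i}^{(k)}$ picks up weight $\lambda^{(i-1)n_k}\lambda^{n_k-2i}=\lambda^{i(n_k-2)}$; in either case the exponent is a fixed integer $\omega$, which verifies condition (iii). Homogeneity in the coefficients, condition (i), is immediate from the explicit formula for $\gamma_{k,i}^{(m)}$: it is homogeneous of degree $1$ in the $\alpha_{k,j}$ when the factor $(-\alpha_{m,1}/n_m\alpha_{m,0})^{i-j}$ is treated formally, and after multiplying by the stated power of $\alpha_{m,0}$ (resp.\ $\alpha_{k,0}$) all denominators clear and one gets an honest polynomial of the claimed total degree.

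The main obstacle I anticipate is purely organizational rather than conceptual: one must be careful that $\gamma_{k,i}^{(m)}$, as written, is an a~priori rational expression (because of the $\alpha_{m,0}$ in the denominator), so the clean transformation statements above really live in the localized ring $\bbC[\alpha_{k,i},\alpha_{m,i}][\alpha_{m,0}^{-1}]$, and the content of the proposition is that multiplying by the indicated power of $\alpha_{m,0}$ (or $\alpha_{k,0}$ in the single-form case) lands back in the polynomial ring while preserving the $\grp{U}_2(\bbC)$-invariance and the homogeneous torus weight. Verifying that the power of $\alpha_{m,0}$ supplied is exactly right to clear all denominators—$i-j$ ranges from $0$ to $i$, so $\alpha_{m,0}^i$ suffices, and $\alpha_{m,0}^{i-1}$ likewise suffices in the $m=k$ case because then $\gamma_{k,0}^{(k)}=\alpha_{k,0}$ and the $j=0$ term's prefactor contributes one fewer negative power—is the one place where a small explicit check is unavoidable, but it is routine. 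Everything else follows from the two one-parameter-subgroup computations above together with the fact that $\grp{SL}_2(\bbC)$ (and its Borel) is generated by these unipotent and torus elements.
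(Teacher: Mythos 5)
Your proposal is correct and follows essentially the same route as the paper's proof: direct verification of conditions (ii) and (iii) of Definition~\ref{def:semibin} by computing how the translation parameter $\alpha_{m,1}/(n_m\alpha_{m,0})$ shifts under the unipotent subgroup and how $\gamma_{k,i}^{(m)}$ scales under the torus, yielding the same weight $\lambda^{n_k+(n_m-2)i}$ for $\alpha_{m,0}^i\gamma_{k,i}^{(m)}$. Your extra remarks on clearing denominators and the separate weight computation for the $m=k$ case are slightly more explicit than the paper but do not change the argument.
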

\begin{proof}
Clearly, they are homogeneous polynomials of $\alpha_{k,i}$.
Since a semiinvariant of $f_k$ is
a joint semiinvariant of $f_k$ and $f_m$,
it is enough to show that $\alpha_{m,0}^i\gamma_{k,i}^{(m)}$
satisfy the conditions (ii) and (iii) of Definition~\ref{def:semibin}.
Under the transformation $(u',v')=(u+\kappa v,v)$,
the coefficients $\alpha_{k,i}$ transform as
\begin{align}
\sum_{i=0}^{n_k}\alpha'_{k,i}u^{n_k-i}
=\sum_{i=0}^{n_k}\alpha_{k,i}(u+\kappa)^{n_k-i}.
\label{eq:alkitrans}
\end{align}
This is equivalent to
\begin{align}
\begin{aligned}
\sum_{i=0}^{n_k}\alpha'_{k,i}
 \left(u-\frac{\alpha'_{m,1}}{n_m\alpha'_{m,0}}\right)^{n_k-i}
&=\sum_{i=0}^{n_k}\alpha_{k,i}
 \left(u-\frac{\alpha'_{m,1}}{n_m\alpha'_{m,0}}+\kappa\right)^{n_k-i}\\
&=\sum_{i=0}^{n_k}\alpha_{k,i}
 \left(u-\frac{\alpha_{m,1}}{n_m\alpha_{m,0}}\right)^{n_k-i}.
\end{aligned}
\end{align}
In the last equality we have used
$\alpha'_{m,0}=\alpha_{m,0},\ 
\alpha'_{m,1}=\alpha_{m,1}+n_m\kappa\alpha_{m,0}$,
which also follow from \eqref{eq:alkitrans}.
Comparing this with \eqref{eq:alkidef},
we obtain
$\gamma_{k,i}^{(m)}(\alpha'_{l,j})=\gamma_{k,i}^{(m)}(\alpha_{l,j})$.
Next, under the transformation $(u',v')=(\lambda u,\lambda^{-1}v)$
we see that
\begin{align}
\alpha'_{k,i}=\lambda^{n_k-2i}\alpha_{k,i},\qquad
{\gamma'}_{k,i}^{(m)}=\lambda^{n_k-2i}\gamma_{k,i}^{(m)}.
\label{eq:alorder}
\end{align}
From this we obtain
$(\alpha'_{m,0})^i{\gamma'}_{k,i}^{(m)}
=\lambda^{n_k+(n_m-2)i}\alpha_{m,0}^i\gamma_{k,i}^{(m)}$.
\end{proof}
%

\subsection{Trigrading}

In this subsection we introduce a trigrading to
the ring $J^{E_8}_{*,*}$ of $W(E_8)$-invariant weak Jacobi forms.
This is necessary for the explicit isomorphism
that we construct in the next subsection.

Let $f$ be a binary quartic and $g$ a binary sextic
\begin{align}
f=\sum_{i=0}^4\alpha_iu^{4-i}v^i,\qquad g=\sum_{i=0}^6\beta_iu^{6-i}v^i.
\end{align}
We consider joint covariants $\Psi(\alpha_i,\beta_j;u,v)$
and joint semiinvariants $\Phi(\alpha_i,\beta_j)$
of $f$ and $g$.
The ring of joint covariants and the ring of joint semiinvariants
are denoted respectively by
\begin{align}
\bbC[V_4\oplus V_6\oplus\bbC^2]^{\grp{SL_2(\bbC)}},\qquad
\bbC[V_4\oplus V_6]^{\grp{U}_2(\bbC)}.
\end{align}
Since we always deal with $f$ and $g$ of fixed degrees,
for simplicity's sake we often omit the variable $v$
using the trivial identification of binary forms with polynomials
\begin{align}
f(u,1)=f(u),\qquad g(u,1)=g(u).
\end{align}

Let us first introduce the notion of refined degrees.
\begin{dfn}\label{def:degalbe}
We say that $\Xi\in
 \bbC[\alpha_0,\ldots,\alpha_4,\beta_0,\ldots,\beta_6,
 \alpha_0^{-1},\beta_0^{-1},u,v]$
is of degrees $(d_\alpha,d_\beta)$
if $\Xi$ is 
homogeneous of degree $d_\alpha$ in $\alpha_i$
and homogeneous of degree $d_\beta$ in $\beta_i$.
\end{dfn}
\begin{rem}
Clearly, a joint covariant (semiinvariant) of degrees
$(d_\alpha,d_\beta)$
is of degree $d=d_\alpha+d_\beta$ in the sense of
Definition~\ref{def:covbin} (Definition~\ref{def:semibin}).
\end{rem}
The ring
$\bbC[V_4\oplus V_6\oplus\bbC^2]^{\grp{SL_2(\bbC)}}$
of covariants and the ring
$\bbC[V_4\oplus V_6]^{\grp{U}_2(\bbC)}$
of semiinvariants admit a trigrading,
graded by degrees $d_\alpha,d_\beta$ and order $\omega$.

We now move on to the ring $J^{E_8}_{*,*}$
of $W(E_8)$-invariant weak Jacobi forms.
We see that similar refined degrees can be introduced:
\begin{dfn}\label{def:refdegab}
We say that 
$\xi\in
\bbC[a_0,a_2,\ldots,a_4,b_0,\ldots,b_6,a_0^{-1},b_0^{-1}]$
is of degrees $(d_a,d_b)$
if $\xi$ is homogeneous of degree $d_a$ in $a_i$ and
homogeneous of degree $d_b$ in $b_j$.
\end{dfn}
Similarly, we can introduce refined degrees $(d_c,d_d)$
in terms of $c_i,d_j$ to
\begin{align}
 \bbC[c_0,\ldots,c_4,d_0,d_2,\ldots,d_6,c_0^{-1},d_0^{-1}]
=\bbC[a_0,a_2,\ldots,a_4,b_0,\ldots,b_6,a_0^{-1},b_0^{-1}].
\label{eq:abcdring}
\end{align}
Then the following fact is clear from \eqref{eq:abincd}:
\begin{prop}
The degrees $(d_c,d_d)$ are identical with $(d_a,d_b)$.
\end{prop}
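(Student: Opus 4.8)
The plan is to unwind the definitions and trace through the explicit change of variables \eqref{eq:abincd}. The key observation is that the relations
\begin{align}
a_i=\sum_{j=0}^i c_j
 \begin{pmatrix} 4-j\\[1ex] 4-i\end{pmatrix}
 \left(-\frac{c_1}{4c_0}\right)^{i-j},\qquad
b_i=\sum_{j=0}^i d_j
 \begin{pmatrix} 6-j\\[1ex] 6-i\end{pmatrix}
 \left(-\frac{c_1}{4c_0}\right)^{i-j}
\nn
\end{align}
have a triangular structure that keeps the two ``sectors'' separate: every $a_i$ is a polynomial (after clearing the denominator $c_0^{i}$) in $c_0,c_1,\ldots,c_4$ alone, with no $d_j$ appearing, and likewise every $b_i$ involves only $c_0,c_1$ and $d_0,\ldots,d_6$, being linear in the $d_j$'s. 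So if I assign to each $c_i$ the refined bidegree $(1,0)$ and to each $d_j$ the bidegree $(0,1)$, then $a_i$ is homogeneous of bidegree $(1,0)$ and $b_i$ is homogeneous of bidegree $(0,1)$ with respect to the $(d_c,d_d)$-grading.

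First I would make precise that both sides of \eqref{eq:abcdring} are the same ring $R_0:=\bbC[a_0,a_2,a_3,a_4,b_0,\ldots,b_6,a_0^{-1},b_0^{-1}]$, and that the $(d_a,d_b)$-grading and the $(d_c,d_d)$-grading are both genuine $\bbZ^2$-gradings on $R_0$ (this uses the algebraic independence statements, Propositions~\ref{prop:abindep} and \ref{prop:cdindep}, so that ``homogeneous of a given bidegree'' is unambiguous; a Laurent monomial in either generating set has a well-defined bidegree, and distinct monomials are linearly independent). Next I would verify the claim of the previous paragraph: from the explicit formulas, $c_0^{\,i}a_i\in\bbC[c_0,\ldots,c_4]$ is homogeneous of $d_c$-degree $i+1$ and $d_d$-degree $0$, hence $a_i=c_0^{-i}\cdot(c_0^{\,i}a_i)$ has $(d_c,d_d)=(1,0)$; similarly $c_0^{\,j}b_j$ is homogeneous of $(d_c,d_d)=(j+1,1)$... let me recount: actually $b_i=\sum_j d_j\binom{6-j}{6-i}(-c_1/4c_0)^{i-j}$, so $c_0^{\,i}b_i$ lies in $\bbC[c_0,c_1,d_0,\ldots,d_6]$ and is linear in the $d$'s, hence homogeneous of $(d_c,d_d)=(i,1)$, giving $b_i$ bidegree $(0,1)$. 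Thus for any $\xi\in R_0$ that is $(d_a,d_b)$-homogeneous of bidegree $(p,q)$, writing $\xi$ as a polynomial expression in the $a$'s and $b$'s and substituting shows $\xi$ is $(d_c,d_d)$-homogeneous of the same bidegree $(p,q)$. By the symmetry of the situation — the inverse relations are of exactly the same triangular shape — the converse also holds, so the two gradings coincide on homogeneous elements, hence as gradings.

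The step I expect to require the most care is the bookkeeping that the change of variables does not mix the two sectors: one must check that the factor $-c_1/4c_0$ appearing in the transition formula, which is itself of bidegree $(0,0)$ in the $(d_c,d_d)$-grading (numerator and denominator both being pure $c$-monomials of $d_d$-degree $0$, and $c_1/c_0$ having $d_c$-degree $0$ as well), genuinely contributes nothing to either degree, so that $a_i$ picks up exactly one power of ``$c$-stuff'' and no ``$d$-stuff,'' and dually for $b_i$. This is entirely routine given \eqref{eq:abincd}, but it is the crux: once it is in place, the proposition is immediate because a $\bbZ^2$-grading on a ring is determined by the bidegrees it assigns to a generating set, and we have shown the $a_i,b_j$ receive the same bidegrees under both gradings.
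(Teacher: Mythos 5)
Your argument is correct and is exactly the justification the paper has in mind: the paper simply states that the proposition ``is clear from \eqref{eq:abincd},'' and what you have written out --- that $-c_1/(4c_0)$ has bidegree $(0,0)$, so each $a_i$ is $(d_c,d_d)$-homogeneous of bidegree $(1,0)$ and each $b_j$ of bidegree $(0,1)$, whence the two $\bbZ^2$-gradings agree on generators and hence everywhere --- is precisely the omitted verification. No issues.
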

Introduction of these degrees to the ring $J^{E_8}_{*,*}$
yields an interesting result:
\begin{thm}\label{thm:trideg}
$W(E_8)$-invariant weak Jacobi forms form
a trigraded ring over $\bbC$, graded by degrees $d_a,d_b$ and
index $m$
\begin{align}
J^{E_8}_{*,*}=\bigoplus_{d_a,d_b,m=0}^\infty J^{E_8}_{d_a,d_b,m}.
\end{align}
Every element of $J^{E_8}_{d_a,d_b,m}$ is of weight
\begin{align}
k=4d_a+6d_b-6m.
\label{eq:graderel}
\end{align}
\end{thm}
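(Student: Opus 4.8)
The plan is to verify that the trigrading is well defined by checking that the three gradings are mutually compatible, and then to derive \eqref{eq:graderel} purely from the bigrading data already recorded in Proposition~\ref{prop:abprop}~(1). First I would observe that by Theorem~\ref{thm:JE8inR} every $W(E_8)$-invariant weak Jacobi form is a unique polynomial in $a_0,a_2,a_3,a_4,b_0,\ldots,b_6$, so the ring $J^{E_8}_{*,*}$ sits inside the polynomial ring $\cR$, which visibly carries a $\bbZ_{\ge 0}^2$-grading by total degree in the $a_i$ and total degree in the $b_j$; this is exactly the $(d_a,d_b)$-grading of Definition~\ref{def:refdegab}. The content of the theorem is then twofold: (i) that this induced $(d_a,d_b)$-grading on $J^{E_8}_{*,*}$ refines to a $(d_a,d_b,m)$-trigrading compatible with the index, and (ii) that on each homogeneous piece the weight is determined by the formula \eqref{eq:graderel}.

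For (i), I would argue that the index $m$ is itself already a function of $(d_a,d_b)$ on each monomial: a monomial $\prod_i a_i^{p_i}\prod_j b_j^{q_j}$ has index $m=\sum_i i\,p_i+\sum_j j\,q_j$ by Proposition~\ref{prop:abprop}~(1), but this is not determined by $d_a=\sum p_i$ and $d_b=\sum q_j$ alone — so instead the correct statement is that $J^{E_8}_{*,*}=\bigoplus J^{E_8}_{k,m}$ already, and one simply further decomposes each $J^{E_8}_{k,m}$ by the $(d_a,d_b)$-grading inherited from $\cR$. The key point to check is that this is consistent: a Jacobi form that is bihomogeneous of degrees $(d_a,d_b)$ and lies in $J^{E_8}_{k,m}$ is automatically supported in a single index, which is immediate since $J^{E_8}_{k,m}$ was fixed to begin with, and is automatically of a single weight by the same token. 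Thus the only real assertion is that the three numbers $(k,m,d_a,d_b)$ of any bihomogeneous Jacobi form satisfy one linear relation, namely \eqref{eq:graderel}, so that $(d_a,d_b,m)$ already determines $k$ and the grading is genuinely a trigrading rather than a four-fold grading.

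For (ii), the derivation of \eqref{eq:graderel} is the routine computational core: on a monomial $\prod_i a_i^{p_i}\prod_j b_j^{q_j}$ with $d_a=\sum_i p_i$, $d_b=\sum_j q_j$, $m=\sum_i i\,p_i+\sum_j j\,q_j$, the weight is, by Proposition~\ref{prop:abprop}~(1),
\begin{align}
k=\sum_i(4-6i)p_i+\sum_j(6-6j)q_j
 =4\sum_i p_i+6\sum_j q_j-6\Bigl(\sum_i i\,p_i+\sum_j j\,q_j\Bigr)
 =4d_a+6d_b-6m.
\end{align}
Since $E_4=12a_0$ and $E_6=216b_0$ carry $(d_a,d_b)=(1,0)$ and $(0,1)$ respectively and weights $4,6$ with index $0$, the formula is consistent with the ambient modular forms as well, and being linear it passes to arbitrary $\bbC$-linear combinations of monomials of fixed $(d_a,d_b,m)$. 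I expect the main (modest) obstacle to be purely expository: making precise that the $(d_a,d_b)$-grading is inherited from the unique-polynomial-expression Theorem~\ref{thm:JE8inR} rather than being an extra structure, and noting that the identical grading in terms of $c_i,d_j$ (guaranteed by the preceding Proposition and by \eqref{eq:abincd}) shows the definition is intrinsic; there is no serious analytic difficulty here, as everything reduces to bookkeeping with the bigrading already established.
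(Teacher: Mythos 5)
Your proposal is correct and follows essentially the same route as the paper: inherit the $(d_a,d_b)$-grading from the unique polynomial expression of Theorem~\ref{thm:JE8inR}, and read off the linear relation $k=4d_a+6d_b-6m$ from the weights and indices of $a_i,b_j$ recorded in Proposition~\ref{prop:abprop}~(1), exactly as the paper does (you merely spell out the monomial computation that the paper leaves implicit). The one point both your write-up and the paper's two-line proof leave tacit is that the $(d_a,d_b)$-homogeneous components of a Jacobi form are again holomorphic Jacobi forms, which follows from the coincidence $(d_c,d_d)=(d_a,d_b)$ you mention together with Theorem~\ref{thm:intersec}.
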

\begin{proof}
By Theorem~\ref{thm:JE8inR},
the grading by degrees $(d_a,d_b)$
is consistently introduced to the ring $J^{E_8}_{*,*}$:
any element of $J^{E_8}_{*,*}$ is expressed uniquely as
a polynomial of $a_i,b_j$
and degrees can be determined term-by-term. The functions
$a_i$ and $b_j$ are of degrees $(1,0)$ and $(0,1)$, respectively,
and have weight and index described
in Proposition~\ref{prop:abprop} (1).
Their degrees $(d_a,d_b)$, weight $k$
and index $m$ are subject to the relation \eqref{eq:graderel}.
\end{proof}
\begin{rem}
This grading looks somewhat unfamiliar. For instance,
$\Delta=a_0^3-27b_0^2$ is not homogeneous with respect to this grading.
 The holomorphic Jacobi form $A_1=-3a_0b_1$ is homogeneous,
but the other $A_m,B_m\ (m\ge 2)$ are not. 
\end{rem}
%

\subsection{Isomorphism}

In this subsection we prove another main theorem
of this paper:
Theorem~\ref{thm:JandS}, or equivalently Theorem~\ref{thm:JandCov}.
We do this by constructing an explicit isomorphism.

Let us define $\ha_i,\hb_i,\hc_i,\hd_i$ by the relations
\begin{align}
\begin{aligned}
\sum_{i=0}^4\ha_i u^{4-i}
&=\sum_{i=0}^4\alpha_i
 \left(u-\frac{\alpha_1}{4\alpha_0}\right)^{4-i},\quad&
\sum_{i=0}^6\hb_i u^{6-i}
&=\sum_{i=0}^6\beta_i
 \left(u-\frac{\alpha_1}{4\alpha_0}\right)^{6-i},\\
\sum_{i=0}^4\hc_i \tu^{4-i}
&=\sum_{i=0}^4\alpha_i
 \left(\tu-\frac{\beta_1}{6\beta_0}\right)^{4-i},&
\sum_{i=0}^6\hd_i \tu^{6-i}
&=\sum_{i=0}^6\beta_i
 \left(\tu-\frac{\beta_1}{6\beta_0}\right)^{6-i}.
\end{aligned}
\label{eq:habcddef}
\end{align}
Explicitly, they are given by
\begin{align}
\begin{aligned}
\ha_i&=\sum_{j=0}^i\alpha_j
 \begin{pmatrix} 4-j\\[1ex] 4-i\end{pmatrix}
 \left(-\frac{\alpha_1}{4\alpha_0}\right)^{i-j},\quad&&
\hb_i=\sum_{j=0}^i\beta_j
 \begin{pmatrix} 6-j\\[1ex] 6-i\end{pmatrix}
 \left(-\frac{\alpha_1}{4\alpha_0}\right)^{i-j},\\
\hc_i&=\sum_{j=0}^i\alpha_j
 \begin{pmatrix} 4-j\\[1ex] 4-i\end{pmatrix}
 \left(-\frac{\beta_1}{6\beta_0}\right)^{i-j},&&
\hd_i=\sum_{j=0}^i\beta_j
 \begin{pmatrix} 6-j\\[1ex] 6-i\end{pmatrix}
 \left(-\frac{\beta_1}{6\beta_0}\right)^{i-j}.
\end{aligned}
\label{eq:habcdform}
\end{align}
Note that
\begin{align}
\ha_1=0,\quad\hd_1=0.
\label{eq:ha1hd1}
\end{align}
\begin{lem}\label{lem:abcdrel}
$\ha_i,\hb_i,\hc_i,\hd_i$ satisfy
the same relations as
\eqref{eq:abincd} for $a_i,b_i,c_i,d_i$.
\end{lem}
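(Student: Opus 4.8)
The plan is to trace the claim back to the elementary observation that the quadruples $(\ha_i,\hb_i)$ and $(\hc_i,\hd_i)$ are obtained from the common data $(\alpha_i,\beta_i)$ by translating the variable so as to annihilate the subleading coefficient of the quartic and of the sextic respectively; hence they differ from each other by a single further translation, exactly as $a_i,b_j$ and $c_i,d_j$ do in passing from \eqref{eq:SWcurveab} to \eqref{eq:SWcurvecd}. First I would restate \eqref{eq:habcddef} as polynomial identities involving $f(u)=\sum_{i=0}^4\alpha_iu^{4-i}$ and $g(u)=\sum_{i=0}^6\beta_iu^{6-i}$: putting $w:=u-\tfrac{\alpha_1}{4\alpha_0}$ and $\tilde w:=\tu-\tfrac{\beta_1}{6\beta_0}$, the four relations of \eqref{eq:habcddef} say precisely that $\sum_i\ha_iu^{4-i}=f(w)$, $\sum_i\hb_iu^{6-i}=g(w)$, $\sum_i\hc_i\tu^{4-i}=f(\tilde w)$ and $\sum_i\hd_i\tu^{6-i}=g(\tilde w)$.

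Next I would set $s:=\dfrac{\alpha_1}{4\alpha_0}-\dfrac{\beta_1}{6\beta_0}$, so that the substitution $u=\tu+s$ identifies $w$ with $\tilde w$. Comparing the $f$-relations (and the $g$-relations) then yields the polynomial identities
\begin{align}
\sum_{i=0}^4\ha_i(\tu+s)^{4-i}&=\sum_{i=0}^4\hc_i\tu^{4-i},&
\sum_{i=0}^6\hb_i(\tu+s)^{6-i}&=\sum_{i=0}^6\hd_i\tu^{6-i}.
\end{align}
Expanding the left-hand sides by the binomial theorem, matching the coefficient of $\tu^{4-i}$ (resp.\ $\tu^{6-i}$) and inverting the resulting triangular linear system, I would obtain
\begin{align}
\ha_i&=\sum_{j=0}^i\hc_j\binom{4-j}{4-i}(-s)^{i-j},&
\hb_i&=\sum_{j=0}^i\hd_j\binom{6-j}{6-i}(-s)^{i-j}.
\end{align}

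Finally I would check that $-s$ is exactly the coefficient $-\hc_1/(4\hc_0)$ appearing in \eqref{eq:abincd}: from \eqref{eq:habcdform} one reads off $\hc_0=\alpha_0$ and $\hc_1=\alpha_1-\tfrac{4\alpha_0\beta_1}{6\beta_0}$, whence $\hc_1/(4\hc_0)=\tfrac{\alpha_1}{4\alpha_0}-\tfrac{\beta_1}{6\beta_0}=s$. Substituting this into the two displays above yields precisely the relations \eqref{eq:abincd} with $a_i,b_i,c_i,d_i$ replaced by $\ha_i,\hb_i,\hc_i,\hd_i$, which is the assertion of the lemma; as a consistency check, one notes that these relations automatically force $\ha_1=0$, while $\hd_1=0$ is immediate from \eqref{eq:habcdform}, in agreement with \eqref{eq:ha1hd1}. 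I do not expect any genuine obstacle here: the argument is just the bookkeeping of a single variable translation, and the only points requiring care are getting the sign of $s$ right and confirming the identification $s=\hc_1/(4\hc_0)$. The conceptual content is simply that $(\ha_i,\hb_i)\leftrightarrow(\hc_i,\hd_i)$ and $(a_i,b_j)\leftrightarrow(c_i,d_j)$ are linked by translations of the same combinatorial shape.
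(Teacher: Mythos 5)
Your argument is correct and takes essentially the same route as the paper's own proof: both identify $u-\tfrac{\alpha_1}{4\alpha_0}=\tu-\tfrac{\beta_1}{6\beta_0}$ as a common variable, so that $(\ha_i,\hb_i)$ and $(\hc_i,\hd_i)$ are related by the single translation $u=\tu+s$, and then compare coefficients. You merely spell out the binomial expansion and the identification $s=\hc_1/(4\hc_0)$ (equivalently, the role of $\ha_1=\hd_1=0$), which the paper leaves implicit.
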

\begin{proof}
Since $u$ and $\tu$ in \eqref{eq:habcddef} are
independent formal variables,
we can rewrite them in terms of a new variable $\breve{u}$ by
\begin{align}
u-\frac{\alpha_1}{4\alpha_0}=\tu-\frac{\beta_1}{6\beta_0}=\breve{u}.
\end{align}
This yields the relations
\begin{align}
\sum_{i=0}^4\ha_i u^{4-i}=\sum_{i=0}^4\hc_i \tu^{4-i},\qquad
\sum_{i=0}^6\hb_i u^{6-i}=\sum_{i=0}^6\hd_i \tu^{6-i}.
\end{align}
Combining these with \eqref{eq:ha1hd1},
we obtain the same relations as in \eqref{eq:abincd}.
\end{proof}
\begin{lem}\label{lem:abcdsemi}
$\alpha_0^{i-1}\ha_i,\ \alpha_0^i\hb_i,\ 
\beta_0^i\hc_i,\ \beta_0^{i-1}\hd_i$ are joint
semiinvariants of $f$ and $g$.
\end{lem}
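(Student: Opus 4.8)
The plan is to reduce the claim to Proposition~\ref{prop:alsemi}, which already establishes the semiinvariant property for the polynomials $\alpha_{k,0}^{i-1}\gamma_{k,i}^{(k)}$ and $\alpha_{m,0}^i\gamma_{k,i}^{(m)}$ built from the shifted coefficients $\gamma_{k,i}^{(m)}$ of several binary forms. The key observation is that the quantities $\ha_i,\hb_i,\hc_i,\hd_i$ defined in \eqref{eq:habcddef}–\eqref{eq:habcdform} are nothing but special instances of these $\gamma_{k,i}^{(m)}$ for the two-form system $(f_1,f_2)=(f,g)$ with $(n_1,n_2)=(4,6)$. Concretely, I would match notation: writing $\alpha_{1,i}=\alpha_i$ (the coefficients of the quartic $f$) and $\alpha_{2,i}=\beta_i$ (the coefficients of the sextic $g$), the shift $-\alpha_{1,1}/(n_1\alpha_{1,0})=-\alpha_1/(4\alpha_0)$ appearing in \eqref{eq:habcddef} is exactly the shift used to define $\gamma_{k,i}^{(1)}$, so $\ha_i=\gamma_{1,i}^{(1)}$ and $\hb_i=\gamma_{2,i}^{(1)}$; similarly the shift $-\beta_1/(6\beta_0)=-\alpha_{2,1}/(n_2\alpha_{2,0})$ is the one defining $\gamma_{k,i}^{(2)}$, so $\hc_i=\gamma_{1,i}^{(2)}$ and $\hd_i=\gamma_{2,i}^{(2)}$. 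This identification is immediate on comparing the explicit formulas \eqref{eq:habcdform} with the displayed expression for $\gamma_{k,i}^{(m)}$ just before Proposition~\ref{prop:alsemi}.

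Once the dictionary is in place, the proof is essentially a citation. By Proposition~\ref{prop:alsemi}, $\alpha_{1,0}^{i-1}\gamma_{1,i}^{(1)}$ is a semiinvariant of $f$ (hence a joint semiinvariant of $f$ and $g$), which gives that $\alpha_0^{i-1}\ha_i$ is a joint semiinvariant. Again by Proposition~\ref{prop:alsemi}, $\alpha_{1,0}^i\gamma_{2,i}^{(1)}$ is a joint semiinvariant of $f_2$ and $f_1$ (taking $k=2$, $m=1$), so $\alpha_0^i\hb_i$ is a joint semiinvariant of $g$ and $f$, i.e.\ of $f$ and $g$. Symmetrically, taking $m=2$ gives that $\alpha_{2,0}^i\gamma_{1,i}^{(2)}=\beta_0^i\hc_i$ is a joint semiinvariant of $f$ and $g$, and $\alpha_{2,0}^{i-1}\gamma_{2,i}^{(2)}=\beta_0^{i-1}\hd_i$ is a semiinvariant of $g$, hence a joint semiinvariant of $f$ and $g$. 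That exhausts all four families in the statement.

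I do not expect a real obstacle here: the only thing to be careful about is the bookkeeping of which index plays the role of $k$ and which of $m$ in each of the four cases, and correspondingly which power of which leading coefficient ($\alpha_0^{i-1}$ versus $\alpha_0^i$, $\beta_0^i$ versus $\beta_0^{i-1}$) is needed to clear denominators — the $i-1$ exponent occurs precisely when the shifted form is the same as the form whose leading coefficient appears, because then $\gamma_{k,1}^{(k)}=0$ and only an $(i-1)$st power is needed, whereas in the mixed case the full $i$th power of the other form's leading coefficient is required, exactly as in \eqref{eq:habcdform}. Since Proposition~\ref{prop:alsemi} was stated in precisely this generality, the argument is short. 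One could optionally add half a sentence noting that each $\ha_i$ etc.\ is manifestly a homogeneous polynomial in $\alpha_0^{-1}$ and the $\alpha_j$ (resp.\ $\beta_0^{-1}$ and the $\beta_j$), so that multiplying by the stated power of the leading coefficient indeed lands in the polynomial ring $\bbC[\alpha_i,\beta_j]$, but this too is already contained in the proof of Proposition~\ref{prop:alsemi}.
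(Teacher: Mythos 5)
Your proposal is correct and matches the paper exactly: the paper's entire proof of this lemma is the one-line observation that it is a special case of Proposition~\ref{prop:alsemi}, and your dictionary $\ha_i=\gamma_{1,i}^{(1)}$, $\hb_i=\gamma_{2,i}^{(1)}$, $\hc_i=\gamma_{1,i}^{(2)}$, $\hd_i=\gamma_{2,i}^{(2)}$ with $(n_1,n_2)=(4,6)$ is precisely the intended specialization. Your extra bookkeeping of the exponents $i$ versus $i-1$ is a correct (and slightly more explicit) elaboration of the same argument.
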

\begin{proof}
This is a special case of Proposition~\ref{prop:alsemi}.
\end{proof}

By the substitution $a_i=\ha_i,b_i=\hb_i,c_i=\hc_i,d_i=\hd_i$
let us introduce the mappings
\begin{align}
\begin{aligned}
\psi&:
\cR=\bbC[a_0,a_2,a_3,a_4,b_0,b_1,b_2,b_3,b_4,b_5,b_6]
\to
\bbC[\alpha_0,\ldots,\alpha_4,\beta_0,\ldots,\beta_6,\alpha_0^{-1}],\\
\tpsi&:
\tcR=\bbC[c_0,c_1,c_2,c_3,c_4,d_0,d_2,d_3,d_4,d_5,d_6]
\to
\bbC\left[\alpha_0,\ldots,\alpha_4,
\beta_0,\ldots,\beta_6,\beta_0^{-1}\right].
\end{aligned}
\label{eq:symrecmap}
\end{align}
It is clear from \eqref{eq:habcdform} that
these mappings have the following properties.
\begin{lem}\label{lem:psiinj}
The mappings $\psi$ and $\tpsi$ are injective.
The inverse mappings from their images are
given by the substitutions
\begin{align}
\begin{aligned}
\psi^{-1}&:\ 
\alpha_i=a_i\ (i\ne 1),\quad \alpha_1=0,\quad\beta_j=b_j,\\
\tpsi^{-1}&:\ 
\alpha_i=c_i,\quad \beta_j=d_j\ (j\ne 1),\quad \beta_1=0.
\end{aligned}
\end{align}
These mappings identify the degrees $(d_a,d_b)$
with $(d_\alpha,d_\beta)$.
\end{lem}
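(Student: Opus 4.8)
The plan is to exhibit explicit left inverses for $\psi$ and $\tpsi$ and to verify them on the polynomial generators; injectivity and the description of the inverse on the image then follow automatically, and the claim about degrees reduces to a short homogeneity count.

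First I would introduce the substitution homomorphism $\sigma$ that sends $\alpha_i\mapsto a_i$ for $i\ne 1$, $\alpha_1\mapsto 0$ and $\beta_j\mapsto b_j$; this is a well-defined ring map from $\bbC[\alpha_0,\ldots,\alpha_4,\beta_0,\ldots,\beta_6,\alpha_0^{-1}]$ into the localization $\cR[a_0^{-1}]$ (the only thing to check being that $\alpha_0$ is sent to the unit $a_0$). Applying $\sigma$ to the explicit formula \eqref{eq:habcdform} for $\ha_i$, every summand with index $j<i$ is annihilated, since it carries the factor $(-\alpha_1/(4\alpha_0))^{i-j}$ with exponent $i-j\ge 1$; only the diagonal term survives, giving $\sigma(\ha_i)=a_i$, and the same computation gives $\sigma(\hb_i)=b_i$. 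Hence $\sigma\circ\psi$ fixes every generator of $\cR$, so $\sigma\circ\psi=\mathrm{id}_{\cR}$; this shows that $\psi$ is injective and that its inverse on the image is exactly the substitution recorded in the lemma. For $\tpsi$ the argument is identical with $\alpha_1$ replaced by $\beta_1$: specializing $\beta_1=0$ in the formulas \eqref{eq:habcdform} for $\hc_i,\hd_i$ again leaves only the diagonal terms $\alpha_i$ and $\beta_i$, so the stated substitution $\tpsi^{-1}$ is a left inverse of $\tpsi$, and injectivity together with the formula for the inverse follows.

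For the refined degrees I would observe that in each summand of \eqref{eq:habcdform} the auxiliary factor $(-\alpha_1/(4\alpha_0))^{i-j}$ has total $\alpha$-degree $0$ and contains no $\beta$, so $\ha_i$ is homogeneous of degrees $(1,0)$ and $\hb_i$ of degrees $(0,1)$ in the sense of Definition~\ref{def:refdegab} — matching the degrees of $a_i$ and $b_j$; symmetrically, the factor $(-\beta_1/(6\beta_0))^{i-j}$ has total $\beta$-degree $0$, so $\hc_i$ is of degrees $(1,0)$ and $\hd_i$ of degrees $(0,1)$, matching those of $c_i$ and $d_j$ under the identification \eqref{eq:abcdring}. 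Therefore $\psi$ and $\tpsi$ are morphisms of bigraded rings identifying $(d_a,d_b)$ with $(d_\alpha,d_\beta)$. I do not expect a real obstacle here; the entire argument rests on the elementary fact that setting the centering shift to zero recovers the original coefficients, and the only point deserving a word of care is that $\sigma$, and likewise the left inverse used for $\tpsi$, must take values in a ring where $a_0$ (respectively $d_0$) is inverted — which is harmless, since applied to the images $\psi(\cR)$ and $\tpsi(\tcR)$ these inverses never actually survive, so the left inverses land back in $\cR$ and $\tcR$.
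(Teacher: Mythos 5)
Your argument is correct and is exactly the reasoning the paper leaves implicit: the paper offers no written proof, merely asserting the lemma ``is clear from \eqref{eq:habcdform}'', and your observation that setting $\alpha_1=0$ (resp.\ $\beta_1=0$) kills every off-diagonal summand in \eqref{eq:habcdform}, leaving $\ha_i\mapsto a_i$, $\hb_j\mapsto b_j$ (resp.\ $\hc_i\mapsto c_i$, $\hd_j\mapsto d_j$), is precisely the intended justification. The homogeneity count for the degrees is likewise the expected one, so this is the same approach, just written out in full.
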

Recall that $J^{E_8}_{*,*}=\cR\cap\tcR$
by Theorem~\ref{thm:intersec}.
\begin{lem}\label{lem:hpsimap}
The restrictions of $\psi$ and $\tpsi$ to
$J^{E_8}_{*,*}=\cR\cap\tcR$
are identical
\begin{align}
\hpsi:=
\psi\big|_{J^{E_8}_{*,*}}=\tpsi\big|_{J^{E_8}_{*,*}}
\end{align}
and define an injection
\begin{align}
\hpsi:J^{E_8}_{*,*}\to
\bbC[\alpha_0,\ldots,\alpha_4,\beta_0,\ldots,\beta_6].
\end{align}
This injection identifies
the degrees $(d_a,d_b)$ with $(d_\alpha,d_\beta)$.
\end{lem}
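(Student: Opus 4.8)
The strategy is to display both $\psi$ and $\tpsi$ as restrictions of a single $\bbC$-algebra homomorphism on a common ambient ring, after which every claim of the lemma follows formally. Put $L:=\bbC[a_0,a_2,a_3,a_4,b_0,\ldots,b_6,a_0^{-1},b_0^{-1}]$, which by \eqref{eq:abcdring} also equals $\bbC[c_0,\ldots,c_4,d_0,d_2,\ldots,d_6,c_0^{-1},d_0^{-1}]$; thus $\cR$ and $\tcR$ are both subrings of $L$, and $J^{E_8}_{*,*}=\cR\cap\tcR\subseteq L$ by Theorem~\ref{thm:intersec}. Let $\tilde L:=\bbC[\alpha_0,\ldots,\alpha_4,\beta_0,\ldots,\beta_6,\alpha_0^{-1},\beta_0^{-1}]$. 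Since $\ha_0=\alpha_0$ and $\hb_0=\beta_0$ are units in $\tilde L$ (read off from \eqref{eq:habcdform}), the $\bbC$-algebra homomorphism $\bbC[a_0,a_2,a_3,a_4,b_0,\ldots,b_6]\to\tilde L$ sending $a_i\mapsto\ha_i$, $b_j\mapsto\hb_j$ factors uniquely through the localization $L$; call the resulting map $\Psi\colon L\to\tilde L$. By construction $\Psi|_{\cR}=\psi$.

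The key step is to check $\Psi|_{\tcR}=\tpsi$. For this I would write each $c_i$ and $d_j$ as the explicit Laurent polynomial in $a_0,\ldots,b_6$ obtained by inverting \eqref{eq:abincd} (legitimate in $L$ because $a_1=0$), apply $\Psi$, and then invoke Lemma~\ref{lem:abcdrel}: the hatted quantities satisfy the very relations \eqref{eq:abincd}, and together with $\ha_1=\hd_1=0$ from \eqref{eq:ha1hd1} these invert to the same Laurent expressions with hats attached, so that $\Psi(c_i)=\hc_i$ and $\Psi(d_j)=\hd_j$. Since $c_0,\ldots,c_4,d_0,d_2,\ldots,d_6$ together with $c_0^{-1},d_0^{-1}$ generate $L$, and $\tpsi$ is by definition the algebra map on $\tcR$ determined by $c_i\mapsto\hc_i$, $d_j\mapsto\hd_j$, this yields $\Psi|_{\tcR}=\tpsi$. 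I expect this to be the main obstacle: it is exactly the point where the compatibility of the two parametrizations of the Seiberg--Witten curve, packaged in Lemma~\ref{lem:abcdrel}, is used; everything else is bookkeeping.

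It remains to assemble the statement. For $\varphi\in J^{E_8}_{*,*}=\cR\cap\tcR$ we get $\psi(\varphi)=\Psi(\varphi)=\tpsi(\varphi)$, so $\psi$ and $\tpsi$ agree there and $\hpsi:=\Psi|_{J^{E_8}_{*,*}}$ is well defined. Its image lies in $\psi(\cR)\cap\tpsi(\tcR)$, hence by \eqref{eq:symrecmap} inside $\bbC[\alpha_0,\ldots,\beta_6,\alpha_0^{-1}]\cap\bbC[\alpha_0,\ldots,\beta_6,\beta_0^{-1}]$; since an element of the fraction field lying simultaneously in the localization at $\alpha_0$ and in the localization at $\beta_0$ is already a polynomial ($\bbC[\alpha_0,\ldots,\beta_6]$ is a unique factorization domain in which $\alpha_0$ and $\beta_0$ are non-associate irreducibles), this intersection equals $\bbC[\alpha_0,\ldots,\alpha_4,\beta_0,\ldots,\beta_6]$, so $\hpsi$ lands in the polynomial ring as asserted. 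Injectivity of $\hpsi$ is inherited from that of $\psi$ (Lemma~\ref{lem:psiinj}), and the identification of $(d_a,d_b)$ with $(d_\alpha,d_\beta)$ is the restriction of the corresponding statement in Lemma~\ref{lem:psiinj}, using that $J^{E_8}_{*,*}$ carries the trigrading of Theorem~\ref{thm:trideg}.
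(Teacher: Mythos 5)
Your proof is correct and follows essentially the same route as the paper: the agreement of $\psi$ and $\tpsi$ on $\cR\cap\tcR$ is deduced from Lemma~\ref{lem:abcdrel}, the image is located in $\bbC[\alpha_0,\ldots,\alpha_4,\beta_0,\ldots,\beta_6]$ as the intersection of the two codomains, and injectivity and the degree identification are inherited from Lemma~\ref{lem:psiinj}. Your extra scaffolding (the common extension $\Psi$ on the localization $L$ and the UFD argument for the intersection of the two localizations) merely makes explicit what the paper's two-sentence proof leaves implicit.
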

\begin{proof}
The restrictions
$\psi\big|_{J^{E_8}_{*,*}}$ and $\tpsi\big|_{J^{E_8}_{*,*}}$
are identical by Lemma~\ref{lem:abcdrel}.
The intersection of the codomains of $\psi$ and $\tpsi$
is $\bbC[\alpha_0,\ldots,\alpha_4,\beta_0,\ldots,\beta_6]$.
\end{proof}
We finally arrive at the main theorem of this paper.
\begin{thm}\label{thm:JandS}
The injection $\hpsi$ gives an isomorphism
between the ring of $W(E_8)$-invariant weak Jacobi forms 
and the ring of joint semiinvariants of $f$ and $g$:
\begin{align}
\begin{aligned}
J^{E_8}_{*,*}&\ \cong\ \bbC[V_4\oplus V_6]^{\grp{U}_2(\bbC)};\\
\hpsi:\varphi(a_i,b_j)&\ \mapsto\ \varphi(\ha_i,\hb_j),\\
\hpsi^{-1}:\Phi(a_i,b_j)&\ \mapsfrom\ \Phi(\alpha_i,\beta_j).
\end{aligned}
\end{align}
Every $W(E_8)$-invariant weak Jacobi form of
degrees $(d_a,d_b)$, weight $k$ 
and index $m$ is mapped
to a semiinvariant of the same degrees $(d_a,d_b)$
and order $\omega=k+4m$.
\end{thm}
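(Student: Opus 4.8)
The plan is to show that $\hpsi$, already known to be an injection by Lemma~\ref{lem:hpsimap}, is surjective onto $\bbC[V_4\oplus V_6]^{\grp{U}_2(\bbC)}$, and that it respects the gradings as claimed. First I would verify the easy direction: the image of $\hpsi$ lands inside the ring of semiinvariants. Indeed, by Lemma~\ref{lem:abcdsemi}, each of $\alpha_0^{i-1}\ha_i$, $\alpha_0^i\hb_i$, $\beta_0^i\hc_i$, $\beta_0^{i-1}\hd_i$ is a joint semiinvariant of $f$ and $g$. Given $\varphi\in J^{E_8}_{*,*}$, write it both as a polynomial in $a_i,b_j$ and as a polynomial in $c_i,d_j$ (Theorem~\ref{thm:intersec}); applying $\hpsi$ and clearing denominators shows $\hpsi(\varphi)$ is a polynomial in the semiinvariants listed above (the two presentations guarantee no spurious poles in $\alpha_0$ or $\beta_0$ survive), hence is itself a semiinvariant. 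The grading statement on degrees $(d_a,d_b)\mapsto(d_\alpha,d_\beta)$ is already in Lemma~\ref{lem:hpsimap}; the order computation $\omega=k+4m$ I would read off from the scaling weight under the torus $\mathrm{diag}(\lambda,\lambda^{-1})$, using \eqref{eq:alorder}: a monomial in $a_i$ of total index contributing to weight $k$ and index $m$ picks up $\lambda^{\,4d_a+6d_b-2(\text{index sum})}$-type factors, and matching this against $4d_a+6d_b-6m=k$ forces $\omega=k+4m$. This is a bookkeeping step and I do not expect difficulty there.

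The substance is surjectivity. Here I would exploit the Roberts isomorphism (Theorem~\ref{thm:Roberts}): the ring of semiinvariants is isomorphic to the ring of covariants $\bbC[V_4\oplus V_6\oplus\bbC^2]^{\grp{SL_2(\bbC)}}$, and a semiinvariant $\Phi$ of order $\omega$ is the source of the covariant $u^\omega\Phi(\halpha_{k,i}(u,v))$ via \eqref{eq:Robinv}. The strategy is to take an arbitrary joint semiinvariant $\Phi(\alpha_i,\beta_j)$ and produce a preimage. The natural candidate: invert the relations \eqref{eq:habcdform} to express $\alpha_i$ (for $i\ne 1$), $\alpha_1=0$, and $\beta_j$ in terms of $\ha_i,\hb_j$ — this is exactly $\psi^{-1}$ on the image — and likewise for $\tpsi^{-1}$. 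Plugging these into $\Phi$ gives a rational expression in $a_i,b_j$; one must show it actually lies in $\cR$, and symmetrically that the $c,d$-version lies in $\tcR$, so that by Theorem~\ref{thm:intersec} it is a genuine Weyl invariant Jacobi form mapping to $\Phi$.

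The main obstacle, as I see it, is precisely showing that $\psi^{-1}(\Phi)$ has no pole in $\alpha_0$ — equivalently, that when a semiinvariant is re-expressed through the "translate to kill $\alpha_1$" substitution, the negative powers of $\alpha_0$ cancel. This is where the defining property of semiinvariants under $\grp{U}_2(\bbC)$ must be used essentially, not just formally. The cleanest route I can think of: a semiinvariant $\Phi$ is, by the $\grp{U}_2$-invariance, already a polynomial in the "leading-coefficient-translated" combinations $\gamma_{k,i}^{(k)}$ together with $\alpha_0,\beta_0$ — more precisely, since $\Phi(\alpha_i,\beta_j)=\Phi(\gamma^{(f)}$-translated coefficients$)$ when we translate $u$ to annihilate $\alpha_1$, and that translated evaluation is $\grp{U}_2$-invariant by construction, the map $\Phi\mapsto\psi^{-1}(\Phi)$ is well-defined into $\bbC[a_0,a_0^{-1},a_2,a_3,a_4,b_0,\dots,b_6]$ with \emph{a priori} only finitely many negative powers of $a_0$; a degree/weight count using \eqref{eq:graderel} and the homogeneity constraints then forces the $a_0$-exponent to be non-negative. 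Symmetrically, using the $\beta_1$-translation and the $c,d$ variables, one gets no pole in $b_0$. Hence $\psi^{-1}(\Phi)\in\cR$ and $\tpsi^{-1}$ of the same element lies in $\tcR$ — but one must check these two preimages coincide, which follows from Lemma~\ref{lem:abcdrel} ensuring $\ha,\hb,\hc,\hd$ obey the same change-of-variables as $a,b,c,d$, so the two inversions are the single inversion of one consistent system. Then $\cR\cap\tcR=J^{E_8}_{*,*}$ delivers the preimage, and injectivity of $\hpsi$ finishes the isomorphism. I would close by remarking that the order/weight correspondence $\omega=k+4m$ combined with the trigrading makes $\hpsi$ an isomorphism of trigraded rings, which is what is needed to transport Olive's generating set.
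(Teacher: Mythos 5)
Your outline of the forward direction and the grading bookkeeping matches the paper: Lemma~\ref{lem:abcdsemi} gives the $\grp{U}_2(\bbC)$-invariance of $\varphi(\ha_i,\hb_j)$, and the scaling \eqref{eq:alorder} gives $\omega=4d_a+6d_b-2m=k+4m$. The problem is in the surjectivity step, which you rightly call the substance of the theorem but then handle by a route that manufactures a difficulty that is not there and proposes an invalid way out of it. Given a joint semiinvariant $\Phi(\alpha_i,\beta_j)$, there is nothing to invert: the candidate preimage is simply $\varphi:=\Phi(a_i,b_j)$ (with $a_1=0$ substituted), which is a polynomial in the $a_i,b_j$ by fiat, so $\varphi\in\cR$ with no poles in $a_0$ to cancel. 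The only nontrivial point is that $\varphi$ also lies in $\tcR$, and this is exactly where the $\grp{U}_2(\bbC)$-invariance of $\Phi$ enters: since $(c_i,d_j)$ are obtained from $(a_i,b_j)$ by the translation \eqref{eq:u1trans}, a unipotent substitution with $\kappa=-b_1/(6b_0)$, the polynomial identity $\Phi(\alpha'_{i},\beta'_{j})=\Phi(\alpha_i,\beta_j)$ gives $\Phi(a_i,b_j)=\Phi(c_i,d_j)\in\tcR$. Theorem~\ref{thm:intersec} then puts $\varphi$ in $J^{E_8}_{*,*}$, and $\hpsi(\varphi)=\Phi(\ha_i,\hb_j)=\Phi(\alpha_i,\beta_j)$ by the same invariance (the hatted quantities are the coefficients of the translated forms). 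This is the paper's argument, and it is two lines.

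By contrast, your proposed resolution of the ``pole in $\alpha_0$'' --- a degree/weight count using \eqref{eq:graderel} forcing the $a_0$-exponent to be non-negative --- cannot work even in principle: the tridegree $(d_a,d_b,m)$, and hence the weight, do not separate Laurent monomials with negative $a_0$-exponent from genuine polynomials. For instance $a_4$ and $a_2^2/a_0$ both have $(d_a,d_b,m)=(1,0,4)$ and weight $-20$. So if a pole-cancellation argument were really needed, this step would fail. (If one insists on your framing, the cancellation in $\Phi(\ha_i,\hb_j)$ is simply the identity $\Phi(\ha_i,\hb_j)=\Phi(\alpha_i,\beta_j)$, whose right-hand side is already a polynomial --- again the $\grp{U}_2(\bbC)$-invariance, not a degree count.) A smaller point: the coincidence of the two preimages follows from the invariance of $\Phi$ under the translation relating $(a_i,b_j)$ to $(c_i,d_j)$, not from Lemma~\ref{lem:abcdrel}, which concerns the hatted variables and serves the forward direction.
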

\begin{proof}
Suppose that $\varphi(a_i,b_j)$ is a $W(E_8)$-invariant weak
Jacobi form of degrees $(d_a,d_b)$, weight $k$ and index $m$.
By Lemma~\ref{lem:hpsimap},
$\hpsi(\varphi)=\varphi(\ha_i,\hb_j)$ is
a polynomial that is homogeneous of degree $d_a$ in $\alpha_i$
and homogeneous of degree $d_b$ in $\beta_j$.
By Lemma~\ref{lem:abcdsemi}, $\varphi(\ha_i,\hb_j)$
is invariant under the transformation $(u',v')=(u+\kappa v,v)$.
To see the scaling behavior
under the transformation $(u',v')=(\lambda u,\lambda^{-1}v)$,
recall that $\ha_i,\hb_j$ scale as
$\ha'_i=\lambda^{4-2i}\ha_i$,
$\hb'_i=\lambda^{6-2i}\hb_i$
(see \eqref{eq:alorder}.)
This implies that $\varphi(\ha_i,\hb_j)$ scales as
\begin{align}
\varphi(\ha'_i,\hb'_j)=\lambda^{4d_a+6d_b-2m}\varphi(\ha_i,\hb_j).
\end{align}
(Recall that the subscript $i$ of $a_i,b_i$ represents their index.)
Therefore, $\varphi(\ha_i,\hb_j)$ is a semiinvariant of $f$ and $g$
of degrees $(d_a,d_b)$ and order
$\omega=4d_a+6d_b-2m$, which is also written as 
$\omega=k+4m$ by Theorem~\ref{thm:trideg}.

Conversely, suppose that $\Phi(\alpha_i,\beta_j)$ is
a joint semiinvariant of $f$ and $g$.
Then $\Phi$ is invariant under the translation \eqref{eq:u1trans}
and thus $\Phi(a_i,b_j)=\Phi(c_i,d_j)$.
Clearly, this is a polynomial of $a_i,b_j$ and,
at the same time, 
a polynomial of $c_i,d_j$.
By Theorem~\ref{thm:intersec},
this is a $W(E_8)$-invariant weak Jacobi form.
By Lemma~\ref{lem:psiinj}, this gives the inverse mapping
$\hpsi^{-1}(\Phi(\alpha_i,\beta_j))=\Phi(a_i,b_j)=\Phi(c_i,d_j)$.

It is obvious that the bijection preserves
the structure of the rings
and thus gives an isomorphism.
\end{proof}

By the Roberts isomorphism (Theorem~\ref{thm:Roberts}),
the ring of semiinvariants and that of covariants are isomorphic.
Hence, we can rephrase Theorem~\ref{thm:JandS} as follows:
\begin{thm}\label{thm:JandCov}
The composite of the mappings \eqref{eq:Robinv} and $\hpsi$
gives an isomorphism between
the ring of $W(E_8)$-invariant weak Jacobi forms 
and the ring of joint covariants of $f$ and $g$:
\begin{align}
J^{E_8}_{*,*}\ \cong\ 
\bbC[V_4\oplus V_6\oplus\bbC^2]^{\grp{SL_2(\bbC)}}.
\end{align}
\end{thm}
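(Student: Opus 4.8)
The plan is to deduce Theorem~\ref{thm:JandCov} as an immediate consequence of the already-established Theorem~\ref{thm:JandS} together with the Roberts isomorphism (Theorem~\ref{thm:Roberts}). The strategy is to compose the isomorphism $\hpsi:J^{E_8}_{*,*}\to\bbC[V_4\oplus V_6]^{\grp{U}_2(\bbC)}$ with the inverse of the Roberts map, which sends a joint semiinvariant to the unique joint covariant having it as source. Since both maps are ring isomorphisms, their composite is a ring isomorphism $J^{E_8}_{*,*}\to\bbC[V_4\oplus V_6\oplus\bbC^2]^{\grp{SL}_2(\bbC)}$, which is exactly the claim. Concretely, given $\varphi(a_i,b_j)\in J^{E_8}_{*,*}$, first apply $\hpsi$ to obtain the semiinvariant $\Phi(\alpha_i,\beta_j)=\varphi(\ha_i,\hb_j)$, then apply the inverse Roberts formula \eqref{eq:Robinv} to produce the covariant $u^\omega\Phi(\halpha_i(u,v),\hat\beta_i(u,v))$, where $\omega=k+4m$ is the order computed in Theorem~\ref{thm:JandS}.

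First I would recall that Theorem~\ref{thm:JandS} gives a bijective ring homomorphism and, moreover, tracks the trigrading: a weak Jacobi form of degrees $(d_a,d_b)$, weight $k$, index $m$ corresponds to a semiinvariant of degrees $(d_\alpha,d_\beta)=(d_a,d_b)$ and order $\omega=k+4m$. Next I would invoke Theorem~\ref{thm:Roberts}, which states that the source map from covariants to semiinvariants is an isomorphism preserving degree and order, with inverse given by \eqref{eq:Robinv}. Composing, I obtain a degree- and order-preserving ring isomorphism; in particular a weak Jacobi form of weight $k$ and index $m$ maps to a covariant of order $\omega=k+4m$, and the trigrading on $J^{E_8}_{*,*}$ matches the $(d_\alpha,d_\beta,\omega)$-grading on the covariant ring. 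I would state these grading correspondences explicitly, since the excerpt emphasizes that the dictionary between Jacobi-form data and covariant data is not transparent.

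There is essentially no obstacle here: the theorem is, as the text itself says, a rephrasing of Theorem~\ref{thm:JandS} via a classical isomorphism that has already been quoted. The only points requiring a line of care are (i) confirming that the composite is still a ring homomorphism—immediate, since a composite of ring homomorphisms is one—and (ii) being precise about which direction of the Roberts map one composes with, so that the map goes \emph{to} the covariant ring rather than from it; one uses the inverse Roberts map, whose explicit form is \eqref{eq:Robinv}. If anything deserves a remark, it is spelling out the resulting generator correspondence: Olive's 194 generators of $\bbC[V_4\oplus V_6\oplus\bbC^2]^{\grp{SL}_2(\bbC)}$ pull back, via this composite, to a minimal generating set of $J^{E_8}_{*,*}$, with the degree $d=d_\alpha+d_\beta$ and order $\omega$ of each covariant translating into the index $m$ and weight $k=4d_\alpha+6d_\beta-6m=\omega-4m$ of the corresponding Jacobi form. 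I would close the proof with that sentence, as it both completes the argument and motivates the explicit tables that presumably follow.
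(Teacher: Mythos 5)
Your proposal is correct and follows exactly the paper's route: the paper also obtains Theorem~\ref{thm:JandCov} by composing the isomorphism $\hpsi$ of Theorem~\ref{thm:JandS} with the (inverse of the) Roberts isomorphism of Theorem~\ref{thm:Roberts}, with the grading dictionary you spell out matching Remark~\ref{rem:grades}. No gaps.
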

As a corollary of Theorem~\ref{thm:JandCov},
we obtain a proof of the lower bound conjecture of Sun and Wang
on the weight of $W(E_8)$-invariant weak Jacobi forms.
\begin{cor}[{Sun and Wang \cite[Conjecture 6.1]{Sun:2021ije}}]
\label{cor:lowerbound}
The weight $k$ of non-zero $W(E_8)$-invariant weak Jacobi forms
of index $m$ is not less than $-4m$.
\end{cor}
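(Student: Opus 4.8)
The plan is to deduce Corollary~\ref{cor:lowerbound} directly from the isomorphism of Theorem~\ref{thm:JandCov}, reducing it to an elementary fact about covariants of binary forms. First I would recall from Theorem~\ref{thm:JandS} that a $W(E_8)$-invariant weak Jacobi form of weight $k$ and index $m$ corresponds (through $\hpsi$ and the Roberts isomorphism) to a joint covariant of $f$ and $g$ of order $\omega = k + 4m$. The statement $k \ge -4m$ is therefore equivalent to $\omega \ge 0$, i.e.\ to the assertion that there are no nonzero joint covariants of negative order.

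The key step is the observation that the order of a covariant is by definition (Definition~\ref{def:covbin}(ii)) the degree of homogeneity of the polynomial $\Psi(\alpha_i,\beta_j;u,v)$ in the variables $u,v$. A polynomial cannot have negative degree in $u,v$, so $\omega \ge 0$ automatically, and hence $\omega = 0$ precisely for invariants. I would spell this out as: if $\varphi \in J^{E_8}_{k,m}$ is nonzero, then $\Psi := (\text{Roberts inverse of } \hpsi(\varphi))$ is a nonzero element of $\bbC[V_4\oplus V_6\oplus\bbC^2]^{\grp{SL_2(\bbC)}}$ of order $\omega = k+4m$; since $\Psi$ is an honest polynomial in $u,v$, its $u,v$-degree $\omega$ is a nonnegative integer, whence $k = \omega - 4m \ge -4m$.

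There is essentially no obstacle here: the content is entirely in the already-proved Theorem~\ref{thm:JandCov}, and the remaining argument is the triviality that polynomials have nonnegative degree. The only point that requires a word of care is the bookkeeping of the order under the two isomorphisms --- one must check that the order $\omega$ assigned to the covariant really equals $k+4m$ and not, say, $k$ or $\omega$ shifted by the degrees $(d_a,d_b)$. But this is exactly the last sentence of Theorem~\ref{thm:JandS} (semiinvariant of order $\omega = k+4m$) combined with the remark after Theorem~\ref{thm:Roberts} that the Roberts isomorphism preserves order. So the proof is a two-line deduction, and I would present it as such, emphasizing that the nontrivial input is the isomorphism and the ``lower bound'' is simply the nonnegativity of the order.

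\begin{proof}
By Theorem~\ref{thm:JandCov}, a nonzero $W(E_8)$-invariant weak Jacobi form $\varphi$ of weight $k$ and index $m$ corresponds to a nonzero joint covariant
$\Psi\in\bbC[V_4\oplus V_6\oplus\bbC^2]^{\grp{SL_2(\bbC)}}$.
By Theorem~\ref{thm:JandS} the corresponding semiinvariant has order $\omega=k+4m$, and the Roberts isomorphism (Theorem~\ref{thm:Roberts}) preserves the order, so $\Psi$ has order $\omega=k+4m$ as well. By Definition~\ref{def:covbin}(ii), $\Psi$ is homogeneous of degree $\omega$ in the variables $u,v$; being a nonzero polynomial, it has $\omega\ge 0$. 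Hence $k=\omega-4m\ge-4m$.
\end{proof}
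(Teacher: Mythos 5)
Your proof is correct and is essentially identical to the paper's own argument: both deduce the bound from Theorem~\ref{thm:JandCov} together with the fact that a covariant, being a polynomial in $u,v$, has nonnegative order $\omega=k+4m$. Your version merely spells out the order bookkeeping through Theorem~\ref{thm:JandS} and the Roberts isomorphism more explicitly than the paper does.
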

\begin{proof}
By definition there are no covariants of negative order.
By the isomorphism, this means that
$\omega=k+4m\ge 0$ for every non-zero element of $J^{E_8}_{*,*}$. 
\end{proof}
In Section~\ref{sec:algorithm} (and also in \cite{Sakai:2022taq})
we studied the structure of the graded subring 
$\Jlb=\bigoplus_{m=0}^\infty J^{E_8}_{-4m,m}$,
which lies on the bound of Sun and Wang.
Another corollary of Theorem~\ref{thm:JandCov} is the following:
\begin{cor}\label{cor:JlbInv}
The graded subring $\Jlb$ is isomorphic to
the ring of joint invariants of $f$ and $g$:
\begin{align}
\Jlb=\bigoplus_{m=0}^\infty J^{E_8}_{-4m,m}\ 
\cong\ 
\bbC[V_4\oplus V_6]^{\grp{SL_2(\bbC)}}.
\end{align}
\end{cor}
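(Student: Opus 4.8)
The plan is to identify, under the isomorphism $\hpsi$ of Theorem~\ref{thm:JandS}, exactly which semiinvariants correspond to elements of the subring $\Jlb=\bigoplus_m J^{E_8}_{-4m,m}$, and to show that they are precisely the invariants of $f$ and $g$. By Theorem~\ref{thm:JandS}, a $W(E_8)$-invariant weak Jacobi form of weight $k$ and index $m$ maps to a joint semiinvariant of $f$ and $g$ of order $\omega=k+4m$. Therefore $\varphi\in J^{E_8}_{-4m,m}$ — i.e.\ $k=-4m$ — maps to a semiinvariant of order $\omega=-4m+4m=0$. Conversely, a semiinvariant of order $0$ must (again by the order formula $\omega=k+4m$) come from a Jacobi form with $k=-4m$, hence from an element of $\Jlb$. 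So under $\hpsi$ the subring $\Jlb$ corresponds exactly to the subring of order-$0$ semiinvariants.

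The next step is to recall the elementary fact, already stated in the excerpt, that a semiinvariant of order $0$ is an invariant, and vice versa. Thus the subring of order-$0$ semiinvariants of $f$ and $g$ is precisely the ring $\bbC[V_4\oplus V_6]^{\grp{SL}_2(\bbC)}$ of joint invariants. Combining this with the previous paragraph, the restriction of $\hpsi$ to $\Jlb$ gives a bijection onto $\bbC[V_4\oplus V_6]^{\grp{SL}_2(\bbC)}$. Since $\hpsi$ is a ring homomorphism (it is the restriction of the substitution maps $\psi,\tpsi$), this restriction is a ring isomorphism, which is exactly the claim of Corollary~\ref{cor:JlbInv}.

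Alternatively, one can phrase the whole argument through Theorem~\ref{thm:JandCov}: under the composite isomorphism $J^{E_8}_{*,*}\cong\bbC[V_4\oplus V_6\oplus\bbC^2]^{\grp{SL}_2(\bbC)}$, a Jacobi form of weight $k$ and index $m$ corresponds to a covariant of order $\omega=k+4m$, and a covariant of order $0$ is an invariant. Then $\Jlb$, being the part with $k=-4m$, corresponds to the order-$0$ part, namely the ring of joint invariants. Both routes are essentially immediate bookkeeping once Theorem~\ref{thm:JandS} (or Theorem~\ref{thm:JandCov}) is in hand.

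There is no real obstacle here: the only point that requires a moment's care is checking that the correspondence between grading data is an ``iff'' and not merely an implication — that is, that \emph{every} order-$0$ semiinvariant (equivalently invariant) in the image of $\hpsi$ arises from a Jacobi form of weight exactly $-4m$ in index $m$, with no other weights contributing. This follows at once from the rigid relation $\omega=k+4m$ of Theorem~\ref{thm:JandS}, which pins down $k$ given $\omega$ and $m$; so the statement reduces to the bookkeeping above and requires no further calculation.
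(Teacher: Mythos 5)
Your proposal is correct and follows essentially the same route as the paper: the paper's proof is the one-line observation that under the isomorphism the subring $\Jlb$ (where $k=-4m$, hence $\omega=k+4m=0$) corresponds to the order-$0$ part, and a covariant (equivalently semiinvariant) of order $0$ is an invariant. Your additional care about the ``iff'' in the grading correspondence is sound but, as you note, immediate from the relation $\omega=k+4m$.
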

\begin{proof}
This is a direct consequence of the fact that
a covariant of order $\omega=0$ is an invariant.
\end{proof}
\begin{rem}\label{rem:grades}
We summarize the relations among various grades:
the weight $k$ and index $m$ of Jacobi forms,
the degree $d$ and order $\omega$ of semiinvariants
(covariants),
and the common degrees $(d_a,d_b)=(d_c,d_d)=(d_\alpha,d_\beta)$.
They satisfy
\begin{align}
\begin{aligned}
d=d_a+d_b,\qquad
\omega=k+4m=4d_a+6d_b-2m,
\end{aligned}
\end{align}
or conversely,
\begin{align}
\begin{aligned}
k=4d_a+6d_b-6m
 =-8d_a-12d_b+3\omega,\qquad
m=2d_a+3d_b-\frac{1}{2}\omega.
\end{aligned}
\end{align}
\end{rem}
%

\section{Generators}

In this section we discuss a minimal basis of the ring $J^{E_8}_{*,*}$
of $W(E_8)$-invariant weak Jacobi forms.

Let $f_k(u,v)\ (k=1,2)$ be binary forms of degree $n_k$.
For $i\in\bbZ_{\ge 0}$,
the $i$th transvectant of $f_1$ and $f_2$ is defined as
\begin{align}
\tv{f_1}{f_2}{i}
:=\frac{(n_1-i)!(n_2-i)!}{n_1!n_2!}
\sum_{j=0}^i(-1)^j\begin{pmatrix}i\\ j\end{pmatrix}
\frac{\partial^i f_1}{\partial u^{i-j}\partial v^j}
\frac{\partial^i f_2}{\partial u^j\partial v^{i-j}}.
\end{align}
In \cite{Olive:2014}, Olive computed a minimal basis of generators
for the ring 
of joint covariants of a binary quartic and a binary sextic.
The basis consists of 194 generators,
which are expressed as (nested) transvectants of
the quartic and the sextic.

Recall that the isomorphism established in the last section
is extremely simple:
For any joint covariant expressed in terms of
\begin{align}
f=\sum_{i=0}^4\alpha_iu^{4-i}v^i,\qquad g=\sum_{i=0}^6\beta_iu^{6-i}v^i,
\end{align}
the corresponding
$W(E_8)$-invariant weak Jacobi form is obtained by simply setting
\begin{align}
u=1,\quad v=0,\quad
\alpha_i=a_i,\quad
\beta_j=b_j,\quad
a_1=0.
\label{eq:isosubs}
\end{align}
Here $a_i,b_j$ are
the basic $W(E_8)$-invariant meromorphic Jacobi forms.
Hence, by abusing notation
we write the basis elements of $J^{E_8}_{*,*}$
as transvectants of $f$ and $g$,
with the understanding that the above substitution is performed.

Thus we immediately obtain
194 Jacobi forms from Olive's results.
The only nontrivial part is the correspondence of the grades
of covariants and Jacobi forms
summarized in Remark~\ref{rem:grades}.
For the study of the ring $J^{E_8}_{*,*}$ of Jacobi forms,
it is useful to rearrange the basis elements
by index $m$ and weight $k$. With this in mind,
we label the basis elements as
\begin{align}
\ggen{d_a}{d_b}{m}{\omega}{n}.
\end{align}
The second subscript $\omega=4d_a+6d_b-2m$
is redundant, but it is convenient to have it,
so that the weight of the element is
immediately calculated as $k=\omega-4m$.
When multiple generators occur at the same set of grades
$(d_a,d_b,m)$,
we further add an extra label $[n]\ (n=1,2,3,\ldots)$
as the third subscript to distinguish them.

The basis elements of $J^{E_8}_{*,*}$ are then expressed as follows:
\begin{align}
\gen{1}{0}{0}{4} &= f,&
\gen{0}{1}{0}{6} &= g,\nn\\
\gen{1}{1}{1}{8} &= \tv{f}{g}{1},&
\gen{2}{0}{2}{4} &= \tv{f}{f}{2},\nn\\
\gen{1}{1}{2}{6} &= \tv{f}{g}{2},&
\gen{0}{2}{2}{8} &= \tv{g}{g}{2},\nn\\
\gen{1}{1}{3}{4} &= \tv{f}{g}{3},&
\gen{3}{0}{3}{6} &= \tv{f}{\gen{2}{0}{2}{4}}{1},\nn\\
\gen{2}{1}{3}{8} &= \tv{\gen{2}{0}{2}{4}}{g}{1},&
\gen{1}{2}{3}{10} &= \tv{f}{\gen{0}{2}{2}{8}}{1},\nn\\
&\vdots&&\vdots\nn\\
\gen{3}{9}{33}{0}
 &= \tv{f\gen{2}{0}{2}{4}}{\gen{0}{3}{8}{2}\ggen{0}{6}{15}{6}{2}}{8},&
\gen{1}{11}{35}{0}
 &= \tv{f}{\gen{0}{3}{8}{2}\gen{0}{8}{23}{2}}{4},\nn\\
\gen{0}{12}{35}{2}
 &= \tv{\gen{0}{3}{5}{8}}{(\gen{0}{3}{8}{2})^3}{6},&
\gen{2}{11}{37}{0}
 &= \tv{f^2}{\gen{0}{3}{6}{6}\gen{0}{8}{23}{2}}{8},\nn\\
\gen{1}{13}{41}{0}
 &= \tv{f}{\gen{0}{3}{8}{2}\gen{0}{10}{29}{2}}{4},&
\gen{0}{15}{45}{0}
 &= \tv{\gen{0}{3}{5}{8}}{(\gen{0}{3}{8}{2})^4}{8}.
\end{align}
The complete list of 194 generators is presented
in Appendix~\ref{app:gen}.
By the substitution \eqref{eq:isosubs} we obtain,
for instance,
\begin{align}
\gen{1}{0}{0}{4} &= f=a_0,&
\gen{0}{1}{0}{6} &= g=b_0,\nn\\
\gen{1}{1}{1}{8} &= \tv{f}{g}{1}=-\frac{a_0b_1}{6},&
\gen{2}{0}{2}{4} &= \tv{f}{f}{2}=\frac{a_0a_2}{3},\nn\\
\gen{1}{1}{2}{6} &= \tv{f}{g}{2}=\frac{2a_0b_2+5a_2b_0}{30},&
\gen{0}{2}{2}{8} &= \tv{g}{g}{2}=\frac{12b_0b_2-5b_1^2}{90}.
\end{align}
\begin{table}[t]
\begin{align*}
\begin{array}{|c|ccccccc|c|}
\hline
m\backslash\omega&0&2&4&6&8&10&12&\#\\ \hline
 0& -& -& 1& 1& -& -& -&  2\\
 1& -& -& -& -& 1& -& -&  1\\
 2& -& -& 1& 1& 1& -& -&  3\\
 3& -& -& 1& 1& 1& 1& 1&  5\\
 4& 1& 1& 1& 1& 1& -& -&  5\\
 5& -& -& 2& 2& 1& -& -&  5\\
 6& 2& 2& 2& 1& -& -& -&  7\\
 7& -& 1& 2& 3& 2& 1& -&  9\\
 8& 1& 3& 3& 1& -& -& -&  8\\
 9& 1& 2& 4& 1& -& -& -&  8\\
10& 2& 3& 3& -& -& -& -&  8\\
11& -& 4& 5& 3& 1& -& -& 13\\
12& 3& 4& 2& -& -& -& -&  9\\
13& 1& 5& 3& -& -& -& -&  9\\
14& 3& 5& 1& -& -& -& -&  9\\
15& 3& 7& 3& 2& -& -& -& 15\\
16& 3& 4& -& -& -& -& -&  7\\
17& 3& 6& 1& -& -& -& -& 10\\
18& 4& 3& -& -& -& -& -&  7\\
19& 3& 4& 1& -& -& -& -&  8\\
20& 3& 1& -& -& -& -& -&  4\\
21& 4& 4& 1& -& -& -& -&  9\\
22& 2& 1& -& -& -& -& -&  3\\
\hline
\multicolumn{9}{c}{}
\end{array}
\hspace{1ex}
\begin{array}{|c|ccccccc|c|}
\hline
m\backslash\omega&0&2&4&6&8&10&12&\#\\ \hline
23& 3& 2& -& -& -& -& -&  5\\
24& 2& -& -& -& -& -& -&  2\\
25& 3& 2& 1& -& -& -& -&  6\\
26& 1& -& -& -& -& -& -&  1\\
27& 1& 1& -& -& -& -& -&  2\\
28& 1& -& -& -& -& -& -&  1\\
29& 2& 1& -& -& -& -& -&  3\\
30& 1& -& -& -& -& -& -&  1\\
31& 1& 1& -& -& -& -& -&  2\\
32& 1& -& -& -& -& -& -&  1\\
33& 1& -& -& -& -& -& -&  1\\
34& -& -& -& -& -& -& -&  0\\
35& 1& 1& -& -& -& -& -&  2\\
36& -& -& -& -& -& -& -&  0\\
37& 1& -& -& -& -& -& -&  1\\
38& -& -& -& -& -& -& -&  0\\
39& -& -& -& -& -& -& -&  0\\
40& -& -& -& -& -& -& -&  0\\
41& 1& -& -& -& -& -& -&  1\\
42& -& -& -& -& -& -& -&  0\\
43& -& -& -& -& -& -& -&  0\\
44& -& -& -& -& -& -& -&  0\\
45& 1& -& -& -& -& -& -&  1\\ \hline
\mbox{Tot.}&60&68&38&17& 8& 2& 1&194\\
\hline
\end{array}
\end{align*}
\caption{Number of generators of $J^{E_8}_{*,*}$,
arranged by index $m$ and order $\omega$.
The weight of generators is given by $k=\omega-4m$.
Observe that the numbers in the first column ($\omega=0$)
are in agreement with those in Table~\ref{table:dlbnew}.}
\label{table:numgen}
\end{table}

Table~\ref{table:numgen} shows the number
of generators arranged by index $m$ and order $\omega$.
The weight of generators is given by $k=\omega-4m$.
As we saw in Corollary~\ref{cor:JlbInv},
the graded subring $\Jlb=\bigoplus_{m=0}^\infty J^{E_8}_{-4m,m}$,
which we studied in Section~\ref{sec:algorithm},
is isomorphic to the ring of joint invariants of $f$ and $g$.
We now see that $\Jlb$ is generated by 60 generators of $\omega=0$.
In Section~\ref{sec:algorithm}, we obtained the number of
generators of index $m$ of $\Jlb$ for $m\le 32$,
as in Table~\ref{table:dlbnew}.
These numbers are in perfect agreement with
those in the first column ($\omega=0$)
of Table~\ref{table:numgen}, as expected.
This serves as a quite nontrivial check of the isomorphism.

\vspace{2ex}

\begin{center}
  {\bf Acknowledgments}
\end{center}

The author is grateful to Haowu Wang for helpful comments.
This work was supported in part by JSPS KAKENHI Grant Number 19K03856.

\vspace{2ex}


\appendix
\renewcommand{\theequation}{\Alph{section}.\arabic{equation}}

\newpage
\section{Basic meromorphic Jacobi forms}\label{app:coeff}

%
\begin{align}
a_0&=\frac{E_4}{12},\qquad
a_1=0,\qquad
a_2=
 \frac{6}{E_4\Delta}\Bigl(-E_4A_2+A_1^2\Bigr),\nn\\
a_3&=\frac{1}{9E_4^2\Delta^2}
 \Bigl(-7E_4^2E_6A_3-20E_4^3B_3
       -9E_4E_6A_1A_2+30E_4^2A_1B_2+6E_6A_1^3\Bigr),\nn\\
a_4&=\frac{1}{288E_4^3\Delta^3}
 \Bigl(576E_4^3\Delta A_4+32256E_4^2\Delta A_1A_3
 -9E_4^5A_2^2-30E_4^3E_6A_2B_2\nn\\
&\hspace{1em}
 -25E_4^4B_2^2+(60E_4^4-12E_4E_6^2)A_1^2A_2
 +80E_4^2E_6A_1^2B_2+(-70E_4^3+6E_6^2)A_1^4\Bigr),\nn\\
b_0&=\frac{E_6}{216},\qquad
b_1=-\frac{4}{E_4}A_1,\qquad
b_2=
 \frac{5}{6E_4^2\Delta}\Bigl(E_4^2B_2-E_6A_1^2\Bigr),\nn\\
b_3&=\frac{1}{108E_4^3\Delta^2}
 \Bigl(-7E_4^5A_3-20E_4^3E_6B_3
 -9E_4^4A_1A_2+30E_4^2E_6A_1B_2\nn\\
&\hspace{1em}
 +(16E_4^3-10E_6^2)A_1^3\Bigr),\nn\\
b_4&=\frac{1}{1728E_4^4\Delta^3}
 \Bigl(-8640E_4^4\Delta B_4
 +138240E_4^3\Delta A_1B_3
 +9E_4^5E_6A_2^2+30E_4^6A_2B_2\nn\\
&\hspace{1em}
 +25E_4^4E_6B_2^2
 -48E_4^4E_6A_1^2A_2
 +(-140E_4^5+60E_4^2E_6^2)A_1^2B_2\nn\\
&\hspace{1em}
 +(74E_4^3E_6-10E_6^3)A_1^4\Bigr),\nn\\
b_5&=\frac{1}{72E_4^5\Delta^3}\Bigl(
 -36288E_4^4\Delta A_5
 -294E_4^6A_2A_3-770E_4^4E_6B_2A_3
 -840E_4^4E_6A_2B_3\nn\\
&\hspace{1em}
 -2200E_4^5B_2B_3+168E_4^5A_1^2A_3
 +480E_4^3E_6A_1^2B_3
 -621E_4^5A_1A_2^2+3525E_4^4A_1B_2^2
\nn\\
&\hspace{1em}
 +1224E_4^4A_1^3A_2-240E_4^2E_6A_1^3B_2
 +(-456E_4^3+24E_6^2)A_1^5
\Bigr),\nn\\
b_6&=\frac{1}{4478976E_4^6\Delta^5}\Bigl(
 -19906560E_4^6\Delta^2B_6
 -188116992E_4^4E_6\Delta^2A_1A_5\nn\\
&\hspace{1em}
 -5184E_4^7E_6\Delta A_2A_4
 -8640E_4^8\Delta B_2A_4
 -103680E_4^8\Delta A_2B_4\nn\\
&\hspace{1em}
 -172800E_4^6E_6\Delta B_2B_4
 +12672E_4^6E_6\Delta A_1^2A_4
 +17280(5E_4^7+9E_4^4E_6^2)\Delta A_1^2B_4\nn\\
&\hspace{1em}
 +112896E_4^7E_6\Delta A_3^2
 +645120E_4^8\Delta A_3B_3
 +921600E_4^6E_6\Delta B_3^2\nn\\
&\hspace{1em}
 -1717632E_4^6E_6\Delta A_1A_2A_3
 -362880(4E_4^7+11E_4^4E_6^2)\Delta A_1B_2A_3\nn\\
&\hspace{1em}
 +483840(4E_4^7-9E_4^4E_6^2)\Delta A_1A_2B_3
 -11404800E_4^5E_6\Delta A_1B_2B_3\nn\\
&\hspace{1em}
 +1161216E_4^5E_6\Delta A_1^3A_3
 -92160(37E_4^6-9E_4^3E_6^2)\Delta A_1^3B_3\nn\\
&\hspace{1em}
 +(135E_4^9E_6+54E_4^6E_6^3)A_2^3
 +(405E_4^{10}+540E_4^7E_6^2)A_2^2B_2
 +1575E_4^8E_6A_2B_2^2\nn\\
&\hspace{1em}
 +(375E_4^9+500E_4^6E_6^2)B_2^3
 +(-3159E_4^8E_6+1701E_4^5E_6^3)A_1^2A_2^2\nn\\
&\hspace{1em}
 +(-3060E_4^9-1800E_4^6E_6^2)A_1^2A_2B_2
 +(6975E_4^7E_6-11025E_4^4E_6^3)A_1^2B_2^2\nn\\
&\hspace{1em}
 +(6768E_4^7E_6-3024E_4^4E_6^3)A_1^4A_2
 +(4260E_4^8+1800E_4^5E_6^2+180E_4^2E_6^4)A_1^4B_2\nn\\
&\hspace{1em}
 +(-3692E_4^6E_6+504E_4^3E_6^3-12E_6^5)A_1^6
\Bigr).
\label{eq:abinABfull}
\end{align}
Here, $E_4,E_6$ are Eisenstein series \eqref{eq:eisendef},
$\Delta=(E_4^3-E_6^2)/1728$
and $A_i,B_j$ are $W(E_8)$-invariant holomorphic Jacobi
forms given in \eqref{E8AB}.
The above expressions are equivalent to those
in \cite[Appendix A]{Sakai:2011xg},
but expressed in a slightly more concise manner
by replacing $(E_4^3-E_6^2)$ with $1728\Delta$.
\begin{align}
c_0&=\frac{E_4}{12},\qquad
c_1=\frac{48A_1}{E_6},\qquad
c_2=
 \frac{6}{E_6^2\Delta}\Bigl(-E_6^2A_2+E_4^2A_1^2\Bigr),\nn\\
c_3&=\frac{1}{9E_6^3\Delta^2}
 \Bigl(-7E_6^4A_3-20E_4E_6^3B_3-9E_4^2E_6^2A_1A_2+30E_6^3A_1B_2\nn\\
&\hspace{1em}
 +(3E_4^4+3E_4E_6^2)A_1^3\Bigr),\nn\\
c_4&=\frac{1}{288E_6^4\Delta^3}
 \Bigl(576E_6^4\Delta A_4-92160E_6^3\Delta A_1B_3
 -9E_4^2E_6^4A_2^2-30E_6^5A_2B_2\nn\\
&\hspace{1em}
 -25E_4E_6^4B_2^2+(-12E_4^4E_6^2+60E_4E_6^4)A_1^2A_2
 +80E_4^2E_6^3A_1^2B_2\nn\\
&\hspace{1em}
 +(2E_4^6+6E_4^3E_6^2-72E_6^4)A_1^4\Bigr),\nn\\
d_0&=\frac{E_6}{216},\qquad
d_1=0,\qquad
d_2=
 \frac{5}{6E_6\Delta}\Bigl(E_6B_2-E_4A_1^2\Bigr),\nn\\
d_3&=\frac{1}{108E_6^2\Delta^2}
 \Bigl(-7E_4^2E_6^2A_3-20E_6^3B_3-9E_4E_6^2A_1A_2+30E_4^2E_6A_1B_2\nn\\
&\hspace{1em}
 +(-20E_4^3+26E_6^2)A_1^3\Bigr),\nn\\
d_4&=\frac{1}{1728E_6^3\Delta^3}
 \Bigl(-8640E_6^3\Delta B_4-48384E_4E_6^2\Delta A_1A_3
 +9E_4E_6^4A_2^2+30E_4^2E_6^3A_2B_2\nn\\
&\hspace{1em}
 +25E_6^4B_2^2
+(-36E_4^3E_6^2-12E_6^4)A_1^2A_2
+(60E_4^4E_6-140E_4E_6^3)A_1^2B_2\nn\\
&\hspace{1em}
+(-30E_4^5+94E_4^2E_6^2)A_1^4\Bigr),\nn\\
d_5&=\frac{1}{72E_6^4\Delta^3}\Bigl(
-21E_4^2E_6^4A_5
-60E_4^2E_6^3A_1B_4
-294E_4E_6^4A_2A_3
-2200E_6^4B_2B_3\nn\\
&\hspace{1em}
+(-168E_4^3E_6^2+336E_6^4)A_1^2A_3
+480E_4E_6^3A_1^2B_3
-513E_6^4A_1A_2^2\nn\\
&\hspace{1em}
+360E_4E_6^3A_1A_2B_2
-216E_4^2E_6^2A_1^3A_2
+(240E_4^3E_6-1440E_6^3)A_1^3B_2\nn\\
&\hspace{1em}
+(-96E_4^4+432E_4E_6^2)A_1^5
\Bigr)
 +\frac{1}{72\Delta^3}\frac{P_{16,5}}{E_4},\nn\\
d_6&=
\frac{1}{13436928E_6^5\Delta^5}\Bigl(
-59719680E_6^5\Delta^2B_6
-564350976E_4E_6^4\Delta^2A_1A_5\nn\\
&\hspace{1em}
-15552E_4E_6^6\Delta A_2A_4
-25920E_4^2E_6^5\Delta B_2A_4
-311040E_4^2E_6^5\Delta A_2B_4\nn\\
&\hspace{1em}
-518400E_6^6\Delta B_2B_4
+38016E_6^6\Delta A_1^2A_4
-51840(9E_4^4E_6^3-23E_4E_6^5)\Delta A_1^2B_4\nn\\
&\hspace{1em}
+338688E_4E_6^6\Delta A_3^2
+1935360E_4^2E_6^5\Delta A_3B_3
+2764800E_6^6\Delta B_3^2\nn\\
&\hspace{1em}
-72576(63E_4^3E_6^4+8E_6^6)\Delta A_1A_2A_3
-16329600E_4E_6^5\Delta A_1B_2A_3\nn\\
&\hspace{1em}
-7257600E_4E_6^5\Delta A_1A_2B_3
-34214400E_4^2E_6^4\Delta A_1B_2B_3\nn\\
&\hspace{1em}
-870912(E_4^5E_6^2-5E_4^2E_6^4)\Delta A_1^3A_3
+552960(9E_4^3E_6^3-23E_6^5)\Delta A_1^3B_3\nn\\
&\hspace{1em}
+(405E_4^3E_6^6+162E_6^8)A_2^3
+(1215E_4^4E_6^5+1620E_4E_6^7)A_2^2B_2
+4725E_4^2E_6^6A_2B_2^2\nn\\
&\hspace{1em}
+(1125E_4^3E_6^5+1500E_6^7)B_2^3
+(-5103E_4^5E_6^4+729E_4^2E_6^6)A_1^2A_2^2\nn\\
&\hspace{1em}
+(1620E_4^6E_6^3-12420E_4^3E_6^5-3780E_6^7)A_1^2A_2B_2\nn\\
&\hspace{1em}
+(33075E_4^4E_6^4-45225E_4E_6^6)A_1^2B_2^2\nn\\
&\hspace{1em}
+(-648E_4^7E_6^2+10368E_4^4E_6^4+1512E_4E_6^6)A_1^4A_2\nn\\
&\hspace{1em}
+(540E_4^8E_6-7560E_4^5E_6^3+25740E_4^2E_6^5)A_1^4B_2\nn\\
&\hspace{1em}
+(-180E_4^9+1512E_4^6E_6^2-3924E_4^3E_6^4-7008E_6^6)A_1^6
\Bigr).
\label{eq:cdinABfull}
\end{align}
Here, $P_{16,5}$ appearing in the expression of $d_5$
is the $W(E_8)$-invariant holomorphic Jacobi form
of weight $16$ and index $5$ given in \eqref{eq:P16c5}.
As explained in the main text,
$a_i,b_j$ and $c_k,d_l$ are related by \eqref{eq:abincd}.

\section{Basic holomorphic Jacobi forms as polynomials}

%
\begin{align}
A_1&=-3a_0b_1,\qquad
A_2=\frac{9a_0b_1^2-2\Delta a_2}{12},\nn\\
A_3&=\frac{-21a_0b_1^3-12\Delta a_0b_3-6\Delta a_2b_1
 +18\Delta a_3b_0}{112},\nn\\
A_4&=\frac{1}{64}(
 4\Delta a_0^2a_2^2+3a_0b_1^4-96\Delta a_0b_1b_3+48\Delta a_0b_2^2
 -144\Delta a_2b_0b_2+36\Delta a_2b_1^2\nn\\
&\hspace{2em}
 +144\Delta a_3b_0b_1+32\Delta^2a_4),\nn\\
A_5&=\frac{1}{5376}(
 36\Delta a_0^2a_2^2b_1-63a_0b_1^5+216\Delta a_0b_1^2b_3
 -144\Delta a_0b_1b_2^2-432\Delta a_2b_0b_1b_2\nn\\
&\hspace{2em}
 -100\Delta a_2b_1^3+1980\Delta a_3b_0b_1^2-128\Delta^2a_0b_5
 -112\Delta^2a_2b_3+176\Delta^2a_3b_2),\nn\\[1ex]
B_2&=\frac{135b_0b_1^2+12\Delta b_2}{10},\qquad
B_3=\frac{-3\Delta a_0^2a_3-270b_0b_1^3+54\Delta b_0b_3
 -36\Delta b_1b_2}{80},\nn\\
B_4&=\frac{1}{160}(
 -16\Delta a_0^2a_2b_2+24\Delta a_0^2a_3b_1+12\Delta a_0a_2^2b_0
 +135b_0b_1^4-432\Delta b_0b_1b_3\nn\\
&\hspace{2em}
 +144\Delta b_0b_2^2+24\Delta b_1^2b_2-32\Delta^2b_4),\nn\\
B_6&=\frac{1}{2560}(
 48\Delta a_0^3b_1^2b_4+48\Delta a_0^2a_2b_1^2b_2
 -144\Delta a_0^2a_3b_1^3-144\Delta a_0^2a_4b_0b_1^2\nn\\
&\hspace{2em}
 -108\Delta a_0a_2^2b_0b_1^2+135b_0b_1^6-64\Delta^2a_0^2a_2b_4
 +48\Delta^2a_0^2a_3b_3-96\Delta^2a_0^2a_4b_2\nn\\
&\hspace{2em}
 -8\Delta^2a_0a_2^2b_2+16\Delta^2a_0a_2a_3b_1+144\Delta^2a_0a_2a_4b_0
 -36\Delta^2a_0a_3^2b_0+12\Delta^2a_2^3b_0\nn\\
&\hspace{2em}
 +1296\Delta b_0^2b_1^2b_4-216\Delta b_0b_1^3b_3+36\Delta b_1^4b_2
 -2592\Delta^2b_0b_1b_5+1152\Delta^2b_0b_2b_4\nn\\
&\hspace{2em}
 -432\Delta^2b_0b_3^2-576\Delta^3b_6),\nn\\
\Delta&=a_0^3-27b_0^2.
\label{eq:ABinab}
\end{align}
\begin{align}
A_1&=\frac{9c_1d_0}{2},\qquad
A_2=\frac{3c_0^2c_1^2-8\Delta c_2}{48},\nn\\
A_3&=\frac{21c_0c_1^3d_0-96\Delta c_0d_3+96\Delta c_1d_2
 +144\Delta c_3d_0}{896},\nn\\
A_4&=\frac{1}{1024}(9c_1^4d_0^2-128\Delta c_0^2c_1c_3
 +64\Delta c_0^2c_2^2-16\Delta c_0c_1^2c_2
 +3\Delta c_1^4+768\Delta c_0d_2^2\nn\\
&\hspace{2em}
 +2304\Delta c_1d_0d_3-2304\Delta c_2d_0d_2+512\Delta^2c_4),\nn\\
A_5&=\frac{1}{172032}(21c_0^2c_1^5d_0+576\Delta c_0^2c_1^2d_3
 +768\Delta c_0^2c_1c_2d_2
 -384\Delta c_0c_1^3d_2+5280\Delta c_0c_1^2c_3d_0\nn\\
&\hspace{2em}
 -1728\Delta c_0c_1c_2^2d_0-224\Delta c_1^3c_2d_0
 +6912\Delta c_1d_0d_2^2-4096\Delta^2c_0d_5+2048\Delta^2c_1d_4\nn\\
&\hspace{2em}
 -3584\Delta^2c_2d_3+5632\Delta^2c_3d_2),\nn\\[1ex]
B_2&=\frac{45c_0c_1^2d_0+48\Delta d_2}{40},\nn\\
B_3&=\frac{270c_1^3d_0^2-24\Delta c_0^2c_3+12\Delta c_0c_1c_2
 -3\Delta c_1^3+432\Delta d_0d_3}{640},\nn\\
B_4&=\frac{1}{2560}(15c_0^2c_1^4d_0+384\Delta c_0^2c_1d_3
 -256\Delta c_0^2c_2d_2
 -96\Delta c_0c_1^2d_2-576\Delta c_0c_1c_3d_0\nn\\
&\hspace{2em}
 +192\Delta c_0c_2^2d_0-96\Delta c_1^2c_2d_0
 +2304\Delta d_0d_2^2-512\Delta^2d_4),\nn\\
B_6&=\frac{1}{163840}(135c_1^6d_0^3+48\Delta c_0^2c_1^4d_2
 +1344\Delta c_0^2c_1^3c_3d_0
 -576\Delta c_0^2c_1^2c_2^2d_0+48\Delta c_0c_1^4c_2d_0\nn\\
&\hspace{2em}
 -3\Delta c_1^6d_0+13824\Delta c_0c_1^2d_0^2d_4
 -8640\Delta c_1^3d_0^2d_3+6912\Delta c_1^2c_2d_0^2d_2\nn\\
&\hspace{2em}
 -20736\Delta c_1^2c_4d_0^3+9216\Delta^2c_0^2c_1d_5
 -4096\Delta^2c_0^2c_2d_4+3072\Delta^2c_0^2c_3d_3\nn\\
&\hspace{2em}
-6144\Delta^2c_0^2c_4d_2-768\Delta^2c_0c_1^2d_4
 +1536\Delta^2c_0c_1c_2d_3-1536\Delta^2c_0c_1c_3d_2\nn\\
&\hspace{2em}
 -512\Delta^2c_0c_2^2d_2+9216\Delta^2c_0c_2c_4d_0
 -2304\Delta^2c_0c_3^2d_0-192\Delta^2c_1^3d_3\nn\\
&\hspace{2em}
 -9216\Delta^2c_1^2c_4d_0-1536\Delta^2c_1c_2c_3d_0+768\Delta^2c_2^3d_0
 +73728\Delta^2d_0d_2d_4\nn\\
&\hspace{2em}-27648\Delta^2d_0d_3^2-36864\Delta^3d_6),\nn\\
\Delta&=c_0^3-27d_0^2.
\label{eq:ABincd}
\end{align}
The relations \eqref{eq:ABinab} and \eqref{eq:ABincd}
are obtained by inverting the relations
\eqref{eq:abinABfull} and \eqref{eq:cdinABfull},
respectively.

\section{Full list of generators}\label{app:gen}

%
\begin{align}
\gen{1}{0}{0}{4} &= f,&
\gen{0}{1}{0}{6} &= g,\nn\\
\gen{1}{1}{1}{8} &= \tv{f}{g}{1},&
\gen{2}{0}{2}{4} &= \tv{f}{f}{2},\nn\\
\gen{1}{1}{2}{6} &= \tv{f}{g}{2},&
\gen{0}{2}{2}{8} &= \tv{g}{g}{2},\nn\\
\gen{1}{1}{3}{4} &= \tv{f}{g}{3},&
\gen{3}{0}{3}{6} &= \tv{f}{\gen{2}{0}{2}{4}}{1},\nn\\
\gen{2}{1}{3}{8} &= \tv{\gen{2}{0}{2}{4}}{g}{1},&
\gen{1}{2}{3}{10} &= \tv{f}{\gen{0}{2}{2}{8}}{1},\nn\\
\gen{0}{3}{3}{12} &= \tv{g}{\gen{0}{2}{2}{8}}{1},&
\gen{2}{0}{4}{0} &= \tv{f}{f}{4},\nn\\
\gen{1}{1}{4}{2} &= \tv{f}{g}{4},&
\gen{0}{2}{4}{4} &= \tv{g}{g}{4},\nn\\
\gen{2}{1}{4}{6} &= \tv{\gen{2}{0}{2}{4}}{g}{2},&
\gen{1}{2}{4}{8} &= \tv{f}{\gen{0}{2}{2}{8}}{2},\nn\\
\ggen{2}{1}{5}{4}{1} &= \tv{f^2}{g}{5},&
\ggen{2}{1}{5}{4}{2} &= \tv{\gen{2}{0}{2}{4}}{g}{3},\nn\\
\ggen{1}{2}{5}{6}{1} &= \tv{f}{\gen{0}{2}{4}{4}}{1},&
\ggen{1}{2}{5}{6}{2} &= \tv{f}{\gen{0}{2}{2}{8}}{3},\nn\\
\gen{0}{3}{5}{8} &= \tv{g}{\gen{0}{2}{4}{4}}{1},&
\gen{3}{0}{6}{0} &= \tv{f}{\gen{2}{0}{2}{4}}{4},\nn\\
\gen{0}{2}{6}{0} &= \tv{g}{g}{6},&
\ggen{2}{1}{6}{2}{1} &= \tv{\gen{2}{0}{2}{4}}{g}{4},\nn\\
\ggen{2}{1}{6}{2}{2} &= \tv{f^2}{g}{6},&
\ggen{1}{2}{6}{4}{1} &= \tv{f}{\gen{0}{2}{4}{4}}{2},\nn\\
\ggen{1}{2}{6}{4}{2} &= \tv{f}{\gen{0}{2}{2}{8}}{4},&
\gen{0}{3}{6}{6} &= \tv{g}{\gen{0}{2}{4}{4}}{2},\nn\\
\gen{1}{2}{7}{2} &= \tv{f}{\gen{0}{2}{4}{4}}{3},&
\ggen{3}{1}{7}{4}{1} &= \tv{\gen{3}{0}{3}{6}}{g}{4},\nn\\
\ggen{3}{1}{7}{4}{2} &= \tv{f\gen{2}{0}{2}{4}}{g}{5},&
\ggen{2}{2}{7}{6}{1} &= \tv{\gen{2}{0}{2}{4}}{\gen{0}{2}{2}{8}}{3},\nn\\
\ggen{2}{2}{7}{6}{2} &= \tv{f^2}{\gen{0}{2}{2}{8}}{5},&
\ggen{2}{2}{7}{6}{3} &= \tv{\gen{2}{0}{2}{4}}{\gen{0}{2}{4}{4}}{1},\nn\\
\ggen{1}{3}{7}{8}{1} &= \tv{f}{\gen{0}{3}{5}{8}}{2},&
\ggen{1}{3}{7}{8}{2} &= \tv{f}{\gen{0}{3}{3}{12}}{4},\nn\\
\gen{0}{4}{7}{10} &= \tv{\gen{0}{2}{2}{8}}{\gen{0}{2}{4}{4}}{1},&
\gen{1}{2}{8}{0} &= \tv{f}{\gen{0}{2}{4}{4}}{4},\nn\\
\ggen{3}{1}{8}{2}{1} &= \tv{f\gen{2}{0}{2}{4}}{g}{6},&
\ggen{3}{1}{8}{2}{2} &= \tv{\gen{3}{0}{3}{6}}{g}{5},\nn\\
\gen{0}{3}{8}{2} &= \tv{g}{\gen{0}{2}{4}{4}}{4},&
\ggen{2}{2}{8}{4}{1} &= \tv{\gen{2}{0}{2}{4}}{\gen{0}{2}{2}{8}}{4},\nn\\
\ggen{2}{2}{8}{4}{2} &= \tv{\gen{2}{0}{2}{4}}{\gen{0}{2}{4}{4}}{2},&
\ggen{2}{2}{8}{4}{3} &= \tv{f^2}{\gen{0}{2}{2}{8}}{6},\nn\\
\gen{1}{3}{8}{6} &= \tv{f}{\gen{0}{3}{5}{8}}{3},&
\gen{3}{1}{9}{0} &= \tv{\gen{3}{0}{3}{6}}{g}{6},\nn\\
\ggen{2}{2}{9}{2}{1} &= \tv{\gen{2}{0}{2}{4}}{\gen{0}{2}{4}{4}}{3},&
\ggen{2}{2}{9}{2}{2} &= \tv{f^2}{\gen{0}{2}{2}{8}}{7},\nn\\
\gen{4}{1}{9}{4} &= \tv{(\gen{2}{0}{2}{4})^2}{g}{5},&
\ggen{1}{3}{9}{4}{1} &= \tv{f}{\gen{0}{3}{5}{8}}{4},\nn\\
\ggen{1}{3}{9}{4}{2} &= \tv{f}{\gen{0}{3}{8}{2}}{1},&
\ggen{1}{3}{9}{4}{3} &= \tv{f}{\gen{0}{3}{6}{6}}{3},\nn\\
\gen{0}{4}{9}{6} &= \tv{g}{\gen{0}{3}{8}{2}}{1},&
\ggen{2}{2}{10}{0}{1} &= \tv{f^2}{\gen{0}{2}{2}{8}}{8},\nn\\
\ggen{2}{2}{10}{0}{2} &= \tv{\gen{2}{0}{2}{4}}{\gen{0}{2}{4}{4}}{4},&
\gen{4}{1}{10}{2} &= \tv{(\gen{2}{0}{2}{4})^2}{g}{6},\nn\\
\ggen{1}{3}{10}{2}{1} &= \tv{f}{\gen{0}{3}{8}{2}}{2},&
\ggen{1}{3}{10}{2}{2} &= \tv{f}{\gen{0}{3}{6}{6}}{4},\nn\\
\ggen{3}{2}{10}{4}{1} &= \tv{f\gen{2}{0}{2}{4}}{\gen{0}{2}{2}{8}}{6},&
\ggen{3}{2}{10}{4}{2}
 &= \tv{\gen{3}{0}{3}{6}}{\gen{0}{2}{2}{8}}{5},\nn\\
\gen{0}{4}{10}{4} &= \tv{g}{\gen{0}{3}{8}{2}}{2},&
\ggen{3}{2}{11}{2}{1}
 &= \tv{f\gen{2}{0}{2}{4}}{\gen{0}{2}{2}{8}}{7},\nn\\
\ggen{3}{2}{11}{2}{2} &= \tv{f^3}{g^2}{11},&
\ggen{3}{2}{11}{2}{3}
 &= \tv{\gen{3}{0}{3}{6}}{\gen{0}{2}{2}{8}}{6},\nn\\
\ggen{3}{2}{11}{2}{4} &= \tv{\gen{3}{0}{3}{6}}{\gen{0}{2}{4}{4}}{4},&
\ggen{2}{3}{11}{4}{1} &= \tv{f^2}{\gen{0}{3}{3}{12}}{8},\nn\\
\ggen{2}{3}{11}{4}{2} &= \tv{\gen{2}{0}{2}{4}}{\gen{0}{3}{8}{2}}{1},&
\ggen{2}{3}{11}{4}{3}
 &= \tv{\gen{2}{0}{2}{4}}{\gen{0}{3}{6}{6}}{3},\nn\\
\ggen{2}{3}{11}{4}{4} &= \tv{\gen{2}{0}{2}{4}}{\gen{0}{3}{5}{8}}{4},&
\ggen{2}{3}{11}{4}{5} &= \tv{f^2}{\gen{0}{3}{6}{6}}{5},\nn\\
\ggen{1}{4}{11}{6}{1} &= \tv{f}{\gen{0}{4}{7}{10}}{4},&
\ggen{1}{4}{11}{6}{2} &= \tv{f}{\gen{0}{4}{10}{4}}{1},\nn\\
\ggen{1}{4}{11}{6}{3} &= \tv{f}{\gen{0}{4}{9}{6}}{2},&
\gen{0}{5}{11}{8} &= \tv{\gen{0}{2}{2}{8}}{\gen{0}{3}{8}{2}}{1},\nn\\
\ggen{3}{2}{12}{0}{1} &= \tv{f\gen{2}{0}{2}{4}}{\gen{0}{2}{2}{8}}{8},&
\ggen{3}{2}{12}{0}{2} &= \tv{f^3}{g^2}{12},\nn\\
\gen{0}{4}{12}{0} &= \tv{\gen{0}{2}{4}{4}}{\gen{0}{2}{4}{4}}{4},&
\ggen{2}{3}{12}{2}{1}
 &= \tv{\gen{2}{0}{2}{4}}{\gen{0}{3}{6}{6}}{4},\nn\\
\ggen{2}{3}{12}{2}{2} &= \tv{f^2}{\gen{0}{3}{6}{6}}{6},&
\ggen{2}{3}{12}{2}{3}
 &= \tv{\gen{2}{0}{2}{4}}{\gen{0}{3}{8}{2}}{2},\nn\\
\ggen{2}{3}{12}{2}{4} &= \tv{f^2}{\gen{0}{3}{5}{8}}{7},&
\ggen{1}{4}{12}{4}{1} &= \tv{f}{\gen{0}{4}{9}{6}}{3},\nn\\
\ggen{1}{4}{12}{4}{2} &= \tv{f}{\gen{0}{4}{10}{4}}{2},&
\gen{2}{3}{13}{0} &= \tv{f^2}{\gen{0}{3}{5}{8}}{8},\nn\\
\ggen{4}{2}{13}{2}{1} &= \tv{f^2\gen{2}{0}{2}{4}}{g^2}{11},&
\ggen{4}{2}{13}{2}{2}
 &= \tv{f\gen{3}{0}{3}{6}}{\gen{0}{2}{2}{8}}{8},\nn\\
\ggen{4}{2}{13}{2}{3}
 &= \tv{(\gen{2}{0}{2}{4})^2}{\gen{0}{2}{2}{8}}{7},&
\ggen{1}{4}{13}{2}{1} &= \tv{f}{\gen{0}{4}{10}{4}}{3},\nn\\
\ggen{1}{4}{13}{2}{2} &= \tv{f}{\gen{0}{4}{9}{6}}{4},&
\ggen{3}{3}{13}{4}{1}
 &= \tv{f\gen{2}{0}{2}{4}}{\gen{0}{3}{3}{12}}{8},\nn\\
\ggen{3}{3}{13}{4}{2} &= \tv{\gen{3}{0}{3}{6}}{\gen{0}{3}{6}{6}}{4},&
\gen{0}{5}{13}{4} &= \tv{\gen{0}{2}{4}{4}}{\gen{0}{3}{8}{2}}{1},\nn\\
\ggen{4}{2}{14}{0}{1} &= \tv{f^2\gen{2}{0}{2}{4}}{g^2}{12},&
\ggen{4}{2}{14}{0}{2}
 &= \tv{(\gen{2}{0}{2}{4})^2}{\gen{0}{2}{2}{8}}{8},\nn\\
\gen{1}{4}{14}{0} &= \tv{f}{\gen{0}{4}{10}{4}}{4},&
\ggen{3}{3}{14}{2}{1} &= \tv{f^3}{\gen{0}{3}{3}{12}}{11},\nn\\
\ggen{3}{3}{14}{2}{2} &= \tv{f\gen{2}{0}{2}{4}}{\gen{0}{3}{6}{6}}{6},&
\ggen{3}{3}{14}{2}{3}
 &= \tv{f\gen{2}{0}{2}{4}}{\gen{0}{3}{5}{8}}{7},\nn\\
\ggen{3}{3}{14}{2}{4} &= \tv{\gen{3}{0}{3}{6}}{\gen{0}{3}{5}{8}}{6},&
\gen{0}{5}{14}{2} &= \tv{\gen{0}{2}{4}{4}}{\gen{0}{3}{8}{2}}{2},\nn\\
\gen{2}{4}{14}{4} &= \tv{\gen{2}{0}{2}{4}}{\gen{0}{4}{9}{6}}{3},&
\ggen{3}{3}{15}{0}{1}
 &= \tv{f\gen{2}{0}{2}{4}}{\gen{0}{3}{5}{8}}{8},\nn\\
\ggen{3}{3}{15}{0}{2} &= \tv{\gen{3}{0}{3}{6}}{\gen{0}{3}{6}{6}}{6},&
\ggen{3}{3}{15}{0}{3} &= \tv{f^3}{\gen{0}{3}{3}{12}}{12},\nn\\
\ggen{5}{2}{15}{2}{1}
 &= \tv{\gen{2}{0}{2}{4}\gen{3}{0}{3}{6}}{\gen{0}{2}{2}{8}}{8},&
\ggen{5}{2}{15}{2}{2} &= \tv{f(\gen{2}{0}{2}{4})^2}{g^2}{11},\nn\\
\ggen{2}{4}{15}{2}{1} &= \tv{f^2}{g\gen{0}{3}{8}{2}}{7},&
\ggen{2}{4}{15}{2}{2}
 &= \tv{\gen{2}{0}{2}{4}}{\gen{0}{4}{10}{4}}{3},\nn\\
\ggen{2}{4}{15}{2}{3} &= \tv{f^2}{\gen{0}{4}{7}{10}}{8},&
\ggen{2}{4}{15}{2}{4} &= \tv{f^2}{\gen{0}{4}{9}{6}}{6},\nn\\
\ggen{2}{4}{15}{2}{5} &= \tv{\gen{2}{0}{2}{4}}{\gen{0}{4}{9}{6}}{4},&
\ggen{1}{5}{15}{4}{1} &= \tv{f}{\gen{0}{5}{14}{2}}{1},\nn\\
\ggen{1}{5}{15}{4}{2} &= \tv{f}{\gen{0}{5}{11}{8}}{4},&
\ggen{1}{5}{15}{4}{3} &= \tv{f}{\gen{0}{5}{13}{4}}{2},\nn\\
\ggen{0}{6}{15}{6}{1} &= \tv{\gen{0}{3}{5}{8}}{\gen{0}{3}{8}{2}}{2},&
\ggen{0}{6}{15}{6}{2}
 &= \tv{\gen{0}{3}{6}{6}}{\gen{0}{3}{8}{2}}{1},\nn\\
\gen{5}{2}{16}{0} &= \tv{f(\gen{2}{0}{2}{4})^2}{g^2}{12},&
\ggen{2}{4}{16}{0}{1}
 &= \tv{\gen{2}{0}{2}{4}}{\gen{0}{4}{10}{4}}{4},\nn\\
\ggen{2}{4}{16}{0}{2} &= \tv{f^2}{g\gen{0}{3}{8}{2}}{8},&
\ggen{4}{3}{16}{2}{1}
 &= \tv{f^2\gen{2}{0}{2}{4}}{\gen{0}{3}{3}{12}}{11},\nn\\
\ggen{4}{3}{16}{2}{2} &= \tv{f\gen{3}{0}{3}{6}}{\gen{0}{3}{5}{8}}{8},&
\ggen{1}{5}{16}{2}{1} &= \tv{f}{\gen{0}{5}{14}{2}}{2},\nn\\
\ggen{1}{5}{16}{2}{2} &= \tv{f}{\gen{0}{5}{13}{4}}{3},&
\ggen{4}{3}{17}{0}{1}
 &= \tv{f^2\gen{2}{0}{2}{4}}{\gen{0}{3}{3}{12}}{12},\nn\\
\ggen{4}{3}{17}{0}{2}
 &= \tv{(\gen{2}{0}{2}{4})^2}{\gen{0}{3}{5}{8}}{8},&
\gen{1}{5}{17}{0} &= \tv{f}{\gen{0}{5}{13}{4}}{4},\nn\\
\ggen{3}{4}{17}{2}{1} &= \tv{f^3}{\gen{0}{4}{7}{10}}{10},&
\ggen{3}{4}{17}{2}{2} &= \tv{f^3}{g\gen{0}{3}{6}{6}}{11},\nn\\
\ggen{3}{4}{17}{2}{3}
 &= \tv{\gen{3}{0}{3}{6}}{(\gen{0}{2}{4}{4})^2}{6},&
\ggen{3}{4}{17}{2}{4}
 &= \tv{f\gen{2}{0}{2}{4}}{\gen{0}{4}{9}{6}}{6},\nn\\
\ggen{3}{4}{17}{2}{5} &= \tv{f\gen{2}{0}{2}{4}}{\gen{0}{4}{7}{10}}{8},&
\ggen{3}{4}{17}{2}{6}
 &= \tv{\gen{3}{0}{3}{6}}{\gen{0}{4}{9}{6}}{5},\nn\\
\gen{2}{5}{17}{4} &= \tv{\gen{2}{0}{2}{4}}{\gen{0}{5}{11}{8}}{4},&
\ggen{3}{4}{18}{0}{1} &= \tv{f^3}{g\gen{0}{3}{6}{6}}{12},\nn\\
\ggen{3}{4}{18}{0}{2} &= \tv{f\gen{2}{0}{2}{4}}{g\gen{0}{3}{8}{2}}{8},&
\ggen{3}{4}{18}{0}{3}
 &= \tv{\gen{3}{0}{3}{6}}{\gen{0}{4}{9}{6}}{6},\nn\\
\gen{0}{6}{18}{0} &= \tv{\gen{0}{3}{8}{2}}{\gen{0}{3}{8}{2}}{2},&
\ggen{2}{5}{18}{2}{1}
 &= \tv{\gen{2}{0}{2}{4}}{\gen{0}{5}{14}{2}}{2},\nn\\
\ggen{2}{5}{18}{2}{2} &= \tv{f^2}{\gen{0}{5}{11}{8}}{7},&
\ggen{2}{5}{18}{2}{3}
 &= \tv{\gen{2}{0}{2}{4}}{\gen{0}{5}{13}{4}}{3},\nn\\
\gen{5}{3}{19}{0} &= \tv{f(\gen{2}{0}{2}{4})^2}{\gen{0}{3}{3}{12}}{12},&
\ggen{2}{5}{19}{0}{1}
 &= \tv{\gen{2}{0}{2}{4}}{\gen{0}{5}{13}{4}}{4},\nn\\
\ggen{2}{5}{19}{0}{2} &= \tv{f^2}{\gen{0}{5}{11}{8}}{8},&
\gen{4}{4}{19}{2}
 &= \tv{(\gen{2}{0}{2}{4})^2}{\gen{0}{4}{7}{10}}{8},\nn\\
\ggen{1}{6}{19}{2}{1} &= \tv{f}{\ggen{0}{6}{15}{6}{2}}{4},&
\ggen{1}{6}{19}{2}{2} &= \tv{f}{\ggen{0}{6}{15}{6}{1}}{4},\nn\\
\ggen{1}{6}{19}{2}{3} &= \tv{f}{(\gen{0}{3}{8}{2})^2}{3},&
\gen{0}{7}{19}{4} &= \tv{g}{(\gen{0}{3}{8}{2})^2}{3},\nn\\
\ggen{4}{4}{20}{0}{1}
 &= \tv{f^2\gen{2}{0}{2}{4}}{g\gen{0}{3}{6}{6}}{12},&
\ggen{4}{4}{20}{0}{2}
 &= \tv{(\gen{2}{0}{2}{4})^2}{g\gen{0}{3}{8}{2}}{8},\nn\\
\gen{1}{6}{20}{0} &= \tv{f}{(\gen{0}{3}{8}{2})^2}{4},&
\gen{0}{7}{20}{2} &= \tv{g}{(\gen{0}{3}{8}{2})^2}{4},\nn\\
\gen{6}{3}{21}{0} &= \tv{(\gen{2}{0}{2}{4})^3}{\gen{0}{3}{3}{12}}{12},&
\ggen{3}{5}{21}{0}{1}
 &= \tv{\gen{3}{0}{3}{6}}{\gen{0}{2}{4}{4}\gen{0}{3}{8}{2}}{6},\nn\\
\ggen{3}{5}{21}{0}{2} &= \tv{f^3}{g\gen{0}{4}{9}{6}}{12},&
\ggen{3}{5}{21}{0}{3}
 &= \tv{f\gen{2}{0}{2}{4}}{\gen{0}{5}{11}{8}}{8},\nn\\
\ggen{2}{6}{21}{2}{1} &= \tv{f^2}{\gen{0}{3}{6}{6}\gen{0}{3}{8}{2}}{7},&
\ggen{2}{6}{21}{2}{2}
 &= \tv{\gen{2}{0}{2}{4}}{(\gen{0}{3}{8}{2})^2}{3},\nn\\
\ggen{2}{6}{21}{2}{3}
 &= \tv{\gen{2}{0}{2}{4}}{\ggen{0}{6}{15}{6}{1}}{4},&
\ggen{2}{6}{21}{2}{4} &= \tv{f^2}{\ggen{0}{6}{15}{6}{2}}{6},\nn\\
\gen{1}{7}{21}{4} &= \tv{f}{\gen{0}{7}{19}{4}}{2},&
\ggen{2}{6}{22}{0}{1}
 &= \tv{f^2}{\gen{0}{3}{6}{6}\gen{0}{3}{8}{2}}{8},\nn\\
\ggen{2}{6}{22}{0}{2}
 &= \tv{\gen{2}{0}{2}{4}}{(\gen{0}{3}{8}{2})^2}{4},&
\gen{1}{7}{22}{2} &= \tv{f}{\gen{0}{7}{20}{2}}{2},\nn\\
\ggen{4}{5}{23}{0}{1}
 &= \tv{f\gen{3}{0}{3}{6}}{\gen{0}{2}{2}{8}\gen{0}{3}{8}{2}}{10},&
\ggen{4}{5}{23}{0}{2}
 &= \tv{f^2\gen{2}{0}{2}{4}}{g\gen{0}{4}{9}{6}}{12},\nn\\
\gen{1}{7}{23}{0} &= \tv{f}{\gen{0}{7}{19}{4}}{4},&
\gen{3}{6}{23}{2}
 &= \tv{\gen{3}{0}{3}{6}}{(\gen{0}{3}{8}{2})^2}{4},\nn\\
\gen{0}{8}{23}{2} &= \tv{\gen{0}{2}{4}{4}}{(\gen{0}{3}{8}{2})^2}{3},&
\ggen{3}{6}{24}{0}{1}
 &= \tv{f\gen{2}{0}{2}{4}}{\gen{0}{5}{14}{2}g}{8},\nn\\
\ggen{3}{6}{24}{0}{2} &= \tv{f^3}{(\gen{0}{3}{6}{6})^2}{12},&
\gen{5}{5}{25}{0} &= \tv{\gen{2}{0}{2}{4}\gen{3}{0}{3}{6}
 }{\gen{0}{2}{2}{8}\gen{0}{3}{8}{2}}{10},\nn\\
\ggen{2}{7}{25}{0}{1} &= \tv{\gen{2}{0}{2}{4}}{\gen{0}{7}{19}{4}}{4},&
\ggen{2}{7}{25}{0}{2}
 &= \tv{f^2}{\gen{0}{3}{8}{2}\gen{0}{4}{9}{6}}{8},\nn\\
\ggen{1}{8}{25}{2}{1} &= \tv{f}{\gen{0}{8}{23}{2}}{2},&
\ggen{1}{8}{25}{2}{2}
 &= \tv{f}{\gen{0}{3}{8}{2}\gen{0}{5}{14}{2}}{3},\nn\\
\gen{0}{9}{25}{4} &= \tv{\gen{0}{3}{5}{8}}{(\gen{0}{3}{8}{2})^2}{4},&
\gen{1}{8}{26}{0} &= \tv{f}{\gen{0}{3}{8}{2}\gen{0}{5}{14}{2}}{4},\nn\\
\gen{3}{7}{27}{0} &= \tv{f^3}{g\ggen{0}{6}{15}{6}{1}}{12},&
\gen{2}{8}{27}{2}
 &= \tv{f^2}{\gen{0}{3}{6}{6}\gen{0}{5}{14}{2}}{7},\nn\\
\gen{2}{8}{28}{0} &= \tv{f^2}{\gen{0}{3}{6}{6}\gen{0}{5}{14}{2}}{8},&
\gen{4}{7}{29}{0}
 &= \tv{f^2\gen{2}{0}{2}{4}}{g\ggen{0}{6}{15}{6}{2}}{12},\nn\\
\gen{1}{9}{29}{0} &= \tv{f}{\gen{0}{9}{25}{4}}{4},&
\gen{0}{10}{29}{2} &= \tv{g}{(\gen{0}{3}{8}{2})^3}{5},\nn\\
\gen{0}{10}{30}{0} &= \tv{g}{(\gen{0}{3}{8}{2})^3}{6},&
\gen{2}{9}{31}{0}
 &= \tv{f^2}{\gen{0}{3}{8}{2}\ggen{0}{6}{15}{6}{1}}{8},\nn\\
\gen{1}{10}{31}{2} &= \tv{f}{(\gen{0}{5}{14}{2})^2}{3},&
\gen{1}{10}{32}{0} &= \tv{f}{(\gen{0}{5}{14}{2})^2}{4},\nn\\
\gen{3}{9}{33}{0}
 &= \tv{f\gen{2}{0}{2}{4}}{\gen{0}{3}{8}{2}\ggen{0}{6}{15}{6}{2}}{8},&
\gen{1}{11}{35}{0} &= \tv{f}{\gen{0}{3}{8}{2}\gen{0}{8}{23}{2}}{4},\nn\\
\gen{0}{12}{35}{2} &= \tv{\gen{0}{3}{5}{8}}{(\gen{0}{3}{8}{2})^3}{6},&
\gen{2}{11}{37}{0}
 &= \tv{f^2}{\gen{0}{3}{6}{6}\gen{0}{8}{23}{2}}{8},\nn\\
\gen{1}{13}{41}{0} &= \tv{f}{\gen{0}{3}{8}{2}\gen{0}{10}{29}{2}}{4},&
\gen{0}{15}{45}{0} &= \tv{\gen{0}{3}{5}{8}}{(\gen{0}{3}{8}{2})^4}{8}.
\end{align}
%

\section{Special functions}\label{app:functions}

The Jacobi theta functions are defined as
\begin{align}
\begin{aligned}
\varth_1(z,\tau)&:=
 \ri\sum_{n\in\bbZ}(-1)^n y^{n-1/2}q^{(n-1/2)^2/2},\\
\varth_2(z,\tau)&:=
  \sum_{n\in\bbZ}y^{n-1/2}q^{(n-1/2)^2/2},\\
\varth_3(z,\tau)&:=
  \sum_{n\in\bbZ}y^n q^{n^2/2},\\
\varth_4(z,\tau)&:=
  \sum_{n\in\bbZ}(-1)^n y^n q^{n^2/2},
\end{aligned}
\end{align}
where
\begin{align}
y=e^{2\pi \ri z},\qquad q=e^{2\pi \ri\tau}
\end{align}
and $z\in\bbC, \tau\in\bbH$.
We often use the abbreviated notation
\begin{align}
\varth_k(\tau):=\varth_k(0,\tau).
\end{align}
The Dedekind eta function is defined as
\begin{align}
\eta(\tau):=q^{1/24}\prod_{n=1}^\infty(1-q^n).
\end{align}
The Eisenstein series are given by
\begin{align}
E_{2n}(\tau)
 =1-\frac{4n}{B_{2n}}\sum_{k=1}^{\infty}\frac{k^{2n-1}q^k}{1-q^k}
\label{eq:eisendef}
\end{align}
for $n\in\bbZ_{>0}$. The Bernoulli numbers $B_k$ are defined by
\begin{align}
\frac{x}{e^x-1}=\sum_{k=0}^\infty\frac{B_k}{k!}x^k.
\end{align}
We often abbreviate $\eta(\tau),\,E_{2n}(\tau)$
as $\eta,\,E_{2n}$ respectively.


\renewcommand{\section}{\subsection}
\renewcommand{\refname}{\bf References}

\end{document}